\theoremstyle{theorem}
 \newtheorem{thm}{Theorem}[section]
 \newtheorem{prop}[thm]{Proposition}
 \newtheorem{cor}[thm]{Corollary}
\theoremstyle{definition}
 \newtheorem{dfn}[thm]{Definition}
\theoremstyle{remark}
 \newtheorem{rem}[thm]{Remark}
 \numberwithin{equation}{section}
\renewcommand{\le}{\leqslant}\renewcommand{\leq}{\leqslant}
\renewcommand{\geq}{\geqslant}
\def\Aut{\text{\rm Aut}}
\def\Prim{\text{\rm Prim}}
\def\id{\text{\rm id}}
\def\supp{\text{\rm supp}}
\def\ker{\text{\rm ker}}
\def\cker{C^*\text{\rm -ker}}
\def\ZZ{\mathbb{Z}}
\def\NN{\mathbb{N}}
\def\C{\mathbb{C}}
\def\R{\mathbb{R}}
\def\Q{\mathbb{Q}}
\def\F{\mathbb{F}}
\def\ind{\text{\rm ind}}
\def\supp{\text{\rm supp}}
\def\I{\text{\rm I}}
\title[Type I two-step nilpotent groups]{The type I dichotomy\\ 
for two-step nilpotent locally compact groups}
\date{June 9, 2026}
\subjclass[2020]{Primary: 22D10; Secondary: 20G05, 22D25, 43A65.}
\keywords{Unitary representation, locally compact group, nilpotent group, type I group, CCR group, contraction group, amenable group}
\thanks{The authors acknowledge support from the FWO and F.R.S.-FNRS under the Excellence of Science program (project ID 40007542).}
\author[Pierre-Emmanuel Caprace]{Pierre-Emmanuel Caprace} 
\address{Institut de recherche en mathématique et physique \\
Chemin du Cyclotron 2 \\
boîte L7.01.02 \
Université catholique de Louvain \\ 
1348 Louvain-la-Neuve \\
Belgique.}
\email{pierre-emmanuel.caprace@uclouvain.be}
\author[Max Carter]{Max Carter} 
\address{Institut de recherche en mathématique et physique \\
Chemin du Cyclotron 2 \\
boîte L7.01.02 \\
Université catholique de Louvain \\ 
1348 Louvain-la-Neuve \\
Belgique.}
\email{max.carter@uclouvain.be}
\begin{document}

\begin{abstract}
We address the type~$\I$ dichotomy for two-step nilpotent locally compact groups. Invoking work of Baggett--Kleppner, we characterize the closed points of the unitary dual of such a group $G$ purely in terms of the group structure. An algebraic criterion characterizing when $G$ is  a type~$\I$ group is derived. We show that this criterion automatically holds if $G$ is a central extension of vector groups over a non-discrete locally compact field $k$ such that the commutator map is $k$-bilinear. As an application, we show that the unipotent radicals of minimal parabolics in simple algebraic groups of $k$-rank one are type~$\I$ groups. We also discuss the type~$\I$ dichotomy for $p$-torsion contraction groups, and exhibit, for each prime $p$, uncountably many pairwise non-isomorphic such groups that are not type~$\I$. This answers  a recently posed question by the second author. Finally, we adapt a recent construction of Chirvasitu to obtain numerous examples of two-step nilpotent torsion locally compact groups that are not type~$\I$, but that embed as closed cocompact normal subgroups in  two-step nilpotent  groups that are type~$\I$.
\end{abstract}

\maketitle

\begin{flushright}
\begin{minipage}[t]{0.60\linewidth}\itshape\small
The precise boundary between good and bad groups is not well defined, and varies with the amount of technical complication you can tolerate. 
\vspace{2mm}

\hfill\upshape \textemdash Roger E. Howe \cite[p.~307]{How77}, 1977.
\end{minipage}
\end{flushright}

\section{Introduction}

A topological group $G$ is called \textbf{type $\I$} if every unitary representation of $G$ generates a type $\I$ von Neumann algebra \cite[$\S$5]{Dix77}. The classical theory of unitary representations of locally compact groups shows that there is a strong dichotomy between type $\I$ groups  and those which are not type $\I$, that we shall henceforth call the \textbf{type~$\I$ dichotomy}: type $\I$ groups are precisely those groups all of whose unitary representations decompose \textit{uniquely} into a direct integral of irreducible unitary representations \cite[$\S$8]{Dix77}. Kirillov suggested to use the terms \textbf{tame} vs.\ \textbf{wild} instead of type~$\I$ vs.\ non-type~$\I$, see \cite[\S 8.F.b, Comment~(13)]{BH20}.  Also, a result of Glimm, referred to as \textit{Glimm's theorem} (see \cite[Theorem 7.6]{Fol16} or \cite{Gli61}), implies that a second countable locally compact group $G$ is   type $\I$ if and only if its unitary dual, $\widehat{G}$, endowed with Fell's topology, is a $T_0$ topological space. Another natural, and formally stronger condition, is that the unitary dual $\widehat G$ be a $T_1$ space. In that case, the group $G$ is called \textbf{CCR}  (see \cite[$\S$7.2]{Fol16}).

It is thus an important problem to determine which locally compact groups satisfy the type $\I$ property or the stronger CCR property. Classically, it was Lie groups and algebraic groups over local fields where this problem received the most attention, and for many of these groups, the solution to this problem is well understood \cite{Kir62,War72-1,War72-2,Lip75,BE21}. One notable exception, pointed out by Bekka--Echterhoff \cite[Remark~1(iv)]{BE21}, is the class of unipotent algebraic groups over local fields of positive characteristic: it is currently unknown whether all these groups are of type~$\I$. 

In this paper, we address the type $\I$ dichotomy for \textbf{two-step nilpotent groups}, i.e.\ locally compact  groups whose commutator subgroup is central (equivalently, they are nilpotent of nilpotency class at most~$2$). Classical results due to J.~Dixmier \cite{Dix59} and A.~Kirillov~\cite{Kir62} ensure that every connected nilpotent Lie group is CCR, hence of type~$\I$. On the other hand, Thoma's theorem ensures that a discrete group is of type~$\I$ if and only if it is virtually abelian (in particular every discrete group of type~$\I$ is CCR), see \cite{Tho68} or \cite[Theorem~7.D.1]{BH20}. In particular, there exist two-step nilpotent groups that are not of type~$\I$, e.g. the Heisenberg group over~$\ZZ$. 

Since the type~$\I$ and CCR properties are characterized by a separation condition on the unitary dual, it is a natural question to investigate the closed points  in $\widehat{G}$. We denote by $[\pi]$ the class of an irreducible representation with respect to unitary equivalence. 

\begin{thm}[See Theorem~\ref{thm:closed-points}]\label{thmintro:closed-point}
Let $G$ be a  two-step nilpotent second countable locally compact group $G$ with center $Z$, and $\pi$ an irreducible unitary representation of $G$ with central character $\chi$ (see \cite[Definition~1.A.12]{BH20}). Then the following assertions are equivalent:
\begin{enumerate}[(i)]
\item $[\pi]$ is a closed point in the unitary dual $\widehat G$;

\item The homomorphism $\omega_\chi \colon G/Z \to \widehat{G/Z}$, defined by setting $\omega_\chi(gZ)(hZ) = \chi([g, h])$, has a closed image. 
\end{enumerate}
\end{thm}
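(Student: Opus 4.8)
The plan is to reduce the question to irreducible representations with the fixed central character $\chi$, and then to read off the topology of that part of $\widehat G$ from the Mackey analysis of such representations together with the Baggett--Kleppner description of multiplier representations of locally compact abelian groups; condition~(ii) will turn out to be exactly the condition for the relevant point of a fibre to be closed.

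\emph{Step 1: reduction to a fixed central character.} A class $[\rho]\in\widehat G$ lies in the closure $\overline{\{[\pi]\}}$ if and only if $\rho$ is weakly contained in $\pi$. Since weak containment is preserved by restriction to the closed subgroup $Z$, we get $\rho|_Z\prec\pi|_Z$; as $\pi|_Z$ is a multiple of $\chi$ and $\widehat Z$ is Hausdorff, the spectrum of $\rho|_Z$ must be $\{\chi\}$, i.e.\ $\rho$ has central character $\chi$. Hence $\overline{\{[\pi]\}}$ is contained in the subspace $\widehat G_\chi\subseteq\widehat G$ of classes of irreducible representations with central character $\chi$, so $[\pi]$ is closed in $\widehat G$ if and only if it is closed in $\widehat G_\chi$.

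\emph{Step 2: the structure of $\widehat G_\chi$.} Let $B_\chi$ be the continuous alternating bicharacter on $G/Z$ given by $B_\chi(gZ,hZ)=\chi([g,h])$, and let $H_\chi\le G$ be the preimage of its radical; thus $H_\chi$ is a closed normal subgroup containing $Z$, one has $H_\chi/Z=\ker(\omega_\chi)$, and $\chi$ is trivial on commutators of $H_\chi$, so it extends to a character of $H_\chi$. Representations of $G$ with central character $\chi$ are the $\sigma_\chi$-projective representations of the abelian group $G/Z$, where $\sigma_\chi$ is a multiplier with antisymmetrisation $B_\chi$ and symmetrizer group $H_\chi/Z$. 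Performing the Mackey normal subgroup analysis relative to $H_\chi/Z$ — and noting that $G$ acts trivially on the set $Y_\chi$ of characters of $H_\chi$ extending $\chi$ (a coset of $\widehat{H_\chi/Z}$, hence a Hausdorff space), because $B_\chi$ vanishes on $H_\chi/Z$ — and invoking the Baggett--Kleppner theory for the non-degenerate part carried by $G/H_\chi$, one obtains a continuous surjection $\widehat G_\chi\to Y_\chi$ onto the Hausdorff space $Y_\chi$ whose fibre over each point is homeomorphic to the quotient space $\widehat{G/H_\chi}/\mathrm{Im}(\bar\omega_\chi)$, where $\bar\omega_\chi\colon G/H_\chi\to\widehat{G/H_\chi}$ is the injective continuous homomorphism induced by the now non-degenerate bicharacter $B_\chi$, two irreducibles twisted by characters of $G/H_\chi$ being unitarily equivalent precisely when those characters differ by an element of $\mathrm{Im}(\bar\omega_\chi)$. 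Since the base is $T_1$, the closure of $[\pi]$ in $\widehat G_\chi$ is contained in its fibre and coincides with the closure of the corresponding point there; hence $[\pi]$ is closed in $\widehat G_\chi$ if and only if that point is closed in $\widehat{G/H_\chi}/\mathrm{Im}(\bar\omega_\chi)$, i.e.\ (by homogeneity of the quotient group) if and only if $\mathrm{Im}(\bar\omega_\chi)$ is a closed subgroup of $\widehat{G/H_\chi}$.

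\emph{Step 3: from $G/H_\chi$ back to $G/Z$, and the main obstacle.} Finally $\mathrm{Im}(\omega_\chi)$ is closed in $\widehat{G/Z}$ if and only if $\mathrm{Im}(\bar\omega_\chi)$ is closed in $\widehat{G/H_\chi}$: by definition of $H_\chi$, every $\omega_\chi(gZ)$ is trivial on $H_\chi/Z$, so $\mathrm{Im}(\omega_\chi)$ lies in the annihilator $(H_\chi/Z)^\perp$, which is a closed subgroup of $\widehat{G/Z}$ that Pontryagin duality identifies topologically with $\widehat{G/H_\chi}$, carrying $\mathrm{Im}(\omega_\chi)$ onto $\mathrm{Im}(\bar\omega_\chi)$, and a subset of a closed subgroup is closed in the subgroup exactly when it is closed in the ambient group. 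Combining Steps~1--3 yields (i) $\Leftrightarrow$ (ii). The substantive point is the structural assertion in Step~2: identifying the Fell topology on $\widehat G_\chi$ with the fibred topology over $Y_\chi$ having fibre $\widehat{G/H_\chi}/\mathrm{Im}(\bar\omega_\chi)$. This rests on the Baggett--Kleppner analysis of projective representations of a second countable locally compact abelian group with a totally skew multiplier — the relevant Stone--von Neumann type classification together with its continuity properties — combined with the compatibility of the Mackey decomposition over $H_\chi/Z$ with the topologies; showing the fibre is \emph{homeomorphic} to that quotient, not merely in bijection with it, is the delicate step, while the rest is formal.
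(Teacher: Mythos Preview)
Your Steps~1 and~3 are correct, and the implication (ii)~$\Rightarrow$~(i) in Step~2 is essentially right: when $\mathrm{Im}(\bar\omega_\chi)$ is closed it coincides with all of $\widehat{G/H_\chi}$ (density follows from non-degeneracy of $B_\chi$ on $G/H_\chi$), so Baggett--Kleppner give a \emph{unique} irreducible $\omega$-representation, the fibre over each $\alpha\in Y_\chi$ is a singleton, and continuity of $\widehat G_\chi\to Y_\chi$ into the Hausdorff space $Y_\chi$ makes that singleton closed. The genuine gap is in (i)~$\Rightarrow$~(ii). Your assertion that the fibre over $\alpha$ is in bijection---let alone homeomorphic---with $\widehat{G/H_\chi}/\mathrm{Im}(\bar\omega_\chi)$ is unjustified and in fact fails when $\mathrm{Im}(\bar\omega_\chi)$ is not closed. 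Two issues: first, the claim ``$\pi\otimes\psi_1\cong\pi\otimes\psi_2$ precisely when $\psi_1\psi_2^{-1}\in\mathrm{Im}(\bar\omega_\chi)$'' establishes only the \emph{if} direction (conjugation by $\pi(g_0)$ when $\psi=\omega_\chi(g_0)$); the \emph{only if} direction, i.e.\ that the stabiliser of $[\pi]$ under character-twisting is exactly $\mathrm{Im}(\bar\omega_\chi)$, is not a consequence of Baggett--Kleppner and need not hold for irreducibles of a non-type-$\I$ twisted group algebra. Second, even granting the stabiliser claim you would only have identified the \emph{orbit} of $[\pi]$ under $\widehat{G/H_\chi}$ with that quotient, not the whole fibre. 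In the non-type-$\I$ case the fibre is the set of all irreducible $\omega$-representations of $G/H_\chi$ for a totally skew multiplier---a genuinely wild, non-smooth space (think of the irreducibles of an irrational rotation algebra), not parametrised by your coset space.

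The paper (Theorem~\ref{thm:closed-points}) avoids this by routing through factor representations rather than attempting to parametrise the irreducibles. Baggett--Kleppner show that (ii) fails exactly when some factor representation with central character $\chi$ is not type~$\I$; Poguntke's parametrisation (Theorem~\ref{thm:Pog}) then forces such a factor representation to be weakly equivalent to $\pi$; and a consequence of Glimm's theorem (\cite[Corollary~7.F.4]{BH20}, using second countability) produces uncountably many pairwise inequivalent irreducibles weakly equivalent to $\pi$, so $\{[\pi]\}$ cannot be closed. Equivalently, one observes that $C^*(\pi)$ is simple (since $\mathrm{Prim}(G)$ is $T_1$), and a simple $C^*$-algebra with a unique class of irreducibles is elementary, hence type~$\I$. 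This ``simple $+$ not type~$\I$ $\Rightarrow$ at least two irreducibles'' step is exactly the missing ingredient in your argument; it genuinely requires the Glimm machinery and is not recoverable from the quotient-space picture you sketch.
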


The proof relies on a reformulation of results due to Baggett--Kleppner~\cite{BK73} concerning projective representations of abelian groups. We also rely  on the so-called \emph{Poguntke parametrization} of the primitive dual of $G$, that will be recalled in Theorem~\ref{thm:Pog} below. The following characterization of the type~$\I$ property purely in terms of the algebraic/topological structure of $G$ follows readily.

\begin{cor}[See Corollary~\ref{cor:type-I-char}]\label{corintro:type-I-char}
For  a  two-step nilpotent second countable locally compact group $G$ with center $Z$, the following conditions are equivalent: 
\begin{enumerate}[(i)]
\item $G$ is type~$\I$;
\item $G$ is CCR;
\item For each $\chi \in \widehat Z$, the homomorphism $\omega_\chi \colon G/Z \to \widehat{G/Z}$ has closed image. 
\end{enumerate}
\end{cor}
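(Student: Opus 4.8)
The plan is to deduce the corollary from Theorem~\ref{thmintro:closed-point} together with Glimm's theorem and the Poguntke parametrization of the primitive dual mentioned in the introduction. First I would observe that the implication (ii)$\Rightarrow$(i) is immediate: a CCR group is always type~$\I$ (a group being CCR forces $\widehat G$ to be $T_1$, hence $T_0$, and Glimm's theorem then gives type~$\I$; alternatively CCR $\Rightarrow$ type~$\I$ is classical, see \cite[$\S$7.2]{Fol16}). So the real content is the chain (i)$\Rightarrow$(iii)$\Rightarrow$(ii).

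For (i)$\Rightarrow$(iii), I would argue by contraposition. Suppose there is some $\chi \in \widehat Z$ for which $\omega_\chi \colon G/Z \to \widehat{G/Z}$ does not have closed image. By the Poguntke parametrization (Theorem~\ref{thm:Pog}), the character $\chi$ is the central character of some irreducible unitary representation $\pi$ of $G$; here one uses that $G$ is two-step nilpotent so that, for a fixed central character, the relevant cocycle on $G/Z$ is (anti)symmetric and controlled by $\omega_\chi$. Applying Theorem~\ref{thmintro:closed-point} to this $\pi$, the failure of (ii) in that theorem means exactly that $[\pi]$ is not a closed point of $\widehat G$. A non-closed point is in particular a point that is not separated from the other points in its closure, so $\widehat G$ fails to be $T_0$; by Glimm's theorem, $G$ is not type~$\I$. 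This gives (i)$\Rightarrow$(iii).

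For (iii)$\Rightarrow$(ii), assume that $\omega_\chi$ has closed image for every $\chi \in \widehat Z$. By Theorem~\ref{thmintro:closed-point}, every irreducible representation $\pi$ of $G$ (each of which has a well-defined central character, since $Z$ is central and $\pi$ restricted to $Z$ is a scalar multiple of a character by Schur's lemma) gives a closed point $[\pi] \in \widehat G$. Thus every point of $\widehat G$ is closed, i.e.\ $\widehat G$ is $T_1$, which is precisely the definition of $G$ being CCR. Combining the three implications closes the loop.

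The main obstacle I anticipate is the bookkeeping in (i)$\Rightarrow$(iii): one must be careful that \emph{every} character $\chi \in \widehat Z$ genuinely arises as the central character of some irreducible representation of $G$, so that Theorem~\ref{thmintro:closed-point} can actually be applied to detect the failure of $T_0$. For two-step nilpotent $G$ this should follow from the Poguntke parametrization (every $\chi$ extends, via an associated projective representation of $G/Z$ with cocycle determined by $\omega_\chi$, to an irreducible representation of $G$), but the precise statement of Theorem~\ref{thm:Pog} is what must be invoked here; a secondary point is to make sure the equivalence in Theorem~\ref{thmintro:closed-point} is being used for the \emph{same} $\pi$ whose central character is the offending $\chi$, rather than for an arbitrary representation with that central character. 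Everything else reduces to citing Glimm's theorem and Schur's lemma.
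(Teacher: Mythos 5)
Your architecture is close to the paper's: both deductions rest on the closed-point criterion (Theorem~\ref{thm:closed-points}) plus Glimm's theorem, and your steps (ii)~$\Rightarrow$~(i) and (iii)~$\Rightarrow$~(ii) are correct as written (under the paper's convention that CCR means $\widehat G$ is $T_1$). Your worry about whether every $\chi\in\widehat Z$ is realized as a central character is also resolved exactly as you suspect: $A_\chi\neq\emptyset$ in Poguntke's parametrization, and any irreducible representation with kernel $\cker(\ind_{L_\chi}^G\alpha)$ has central character $\chi$.

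However, there is a genuine gap in your step (i)~$\Rightarrow$~(iii). You argue that if $[\pi]$ is not a closed point then ``$\widehat G$ fails to be $T_0$.'' That inference is false as a general topological statement: a space can be $T_0$ while having non-closed points (the Sierpi\'nski space, or $\Prim(A)$ for the Toeplitz algebra). From ``some point of $\widehat G$ is not closed'' you may only conclude that $\widehat G$ is not $T_1$, i.e.\ $G$ is not CCR; so as written your three implications give (ii)~$\Rightarrow$~(iii)~$\Rightarrow$~(ii) and (ii)~$\Rightarrow$~(i), and the cycle back from (i) is never closed. The missing ingredient is precisely the $T_1$ property of $\Prim(G)$ for two-step nilpotent groups (Corollary~\ref{cor:T1}, a consequence of Theorem~\ref{thm:Pog}(ii)): if $[\rho]\neq[\pi]$ lies in $\overline{\{[\pi]\}}$ then $\rho\preceq\pi$, hence $\rho\sim\pi$ by that corollary, hence also $[\pi]\in\overline{\{[\rho]\}}$; the two points are then topologically indistinguishable and $\widehat G$ genuinely fails to be $T_0$, so Glimm applies. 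With that insertion your argument is complete. The paper instead avoids the point-set detour altogether: it uses the equivalence of Theorem~\ref{thm:closed-points}(ii) with ``every factor representation of $G$ with central character $\chi$ is type~$\I$'' together with the fact that a group is type~$\I$ if and only if all of its factor representations are, which also yields the statement without second countability (at the cost of the extra relative-openness clause in condition (iii)).
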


The equivalence between (i) and (ii) follows directly from a general fact, due to Poguntke, that the primitive dual of a two-step nilpotent locally compact group is a $T_1$ space (see Corollary~\ref{cor:T1} below). 

Several variants of the closedness condition in Theorem~\ref{thmintro:closed-point}(ii) will be established for totally disconnected groups, see Proposition~\ref{prop:tdlc-step2-nilp}. This yields the following consequence for discrete countable groups that is of independent interest. 

\begin{cor}[See Corollary~\ref{cor:G-discrete}]
For  a  two-step nilpotent  countable  group $G$ with center $Z$, the following conditions are equivalent:
\begin{enumerate}[(i)]
\item $G$ is type~$\I$;
\item $G$ is virtually abelian; 
\item For each character $\chi \in \widehat Z$, the quotient $G/\ker(\chi)$ is center-by-finite. 
\end{enumerate}
\end{cor}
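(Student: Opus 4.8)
The plan is to deduce the statement from the general characterization in Corollary~\ref{corintro:type-I-char} together with Thoma's theorem. A countable group, equipped with the discrete topology, is a second countable locally compact group, so Corollary~\ref{corintro:type-I-char} applies verbatim: it gives the equivalence of (i) with the condition that, for every $\chi \in \widehat Z$, the homomorphism $\omega_\chi\colon G/Z \to \widehat{G/Z}$ has closed image. Moreover, the equivalence (i) $\Leftrightarrow$ (ii) is precisely Thoma's theorem (see \cite{Tho68} or \cite[Theorem~7.D.1]{BH20}). It therefore remains only to show that, for a fixed $\chi \in \widehat Z$, the image $\omega_\chi(G/Z)$ is closed in $\widehat{G/Z}$ if and only if $G/\ker(\chi)$ is center-by-finite.

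For the key reduction, observe that $G/Z$ is countable and discrete, so its Pontryagin dual $\widehat{G/Z}$ is compact. Hence $\omega_\chi(G/Z)$ is a countable subgroup of a compact group, and such a subgroup is closed if and only if it is finite: if it were closed it would be a compact countable Hausdorff topological group, and by the Baire category theorem such a group has an isolated point, hence is discrete by homogeneity, hence finite. Consequently $\omega_\chi$ has closed image if and only if its kernel has finite index in $G/Z$; equivalently, the subgroup
\[
N_\chi := \{\, g \in G : \chi([g,h]) = 1 \text{ for all } h \in G \,\}
\]
has finite index in $G$. Here $[g,h] \in [G,G] \subseteq Z$ since $G$ is two-step nilpotent, so $\chi([g,h])$ is defined; and $N_\chi$ is a subgroup containing $Z$, because $[G,G]$ is central and thus $g \mapsto \chi([g,h])$ is a homomorphism for each fixed $h$.

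Finally, I would identify $N_\chi$ with the preimage in $G$ of the center of $\bar G := G/\ker(\chi)$. Indeed $\ker(\chi) \leq Z$, and for $g \in G$ the coset $g\ker(\chi)$ is central in $\bar G$ if and only if $[g,h] \in \ker(\chi)$ for all $h \in G$, i.e.\ $\chi([g,h]) = 1$ for all $h$, i.e.\ $g \in N_\chi$. Thus $Z(\bar G) = N_\chi/\ker(\chi)$ and $[\bar G : Z(\bar G)] = [G : N_\chi]$. Combining the three steps yields (i) $\Leftrightarrow$ (iii), which, together with Thoma's theorem, completes the proof. The argument is essentially formal once Corollary~\ref{corintro:type-I-char} is available; the only point requiring a (standard) topological input is the assertion that a countable compact group is finite, so I do not expect a genuine obstacle here. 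One could alternatively read off the equivalence (i) $\Leftrightarrow$ (iii) directly from the totally disconnected variants of the closedness condition recorded in Proposition~\ref{prop:tdlc-step2-nilp}.
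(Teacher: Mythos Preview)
Your proof is correct and follows essentially the same route as the paper: Thoma's theorem for (i)~$\Leftrightarrow$~(ii), and Corollary~\ref{cor:type-I-char} plus the identification of $N_\chi/\ker(\chi)$ with the center of $G/\ker(\chi)$ for (i)~$\Leftrightarrow$~(iii). The only difference is cosmetic: the paper packages the step ``closed image $\Leftrightarrow$ finite image $\Leftrightarrow$ center-by-finite'' by specializing Proposition~\ref{prop:tdlc-step2-nilp} to $U=\{e\}$ (see Remark~\ref{rmk:G-discrete}), whereas you argue it directly via the Baire category observation that a closed countable subgroup of a compact group is finite---which is exactly the argument underlying that proposition in the discrete case.
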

	
The equivalence between (i) and (ii) is valid for all discrete groups by virtue of Thoma's theorem mentioned above. The equivalence between (ii) and (iii) is a purely algebraic statement that will be useful in Section~\ref{sec:contraction}. 

Corollary~\ref{corintro:type-I-char} allows us to recover, in a unified way, results on connected  groups and on $p$-adic analytic groups, see Corollary~\ref{cor:connected+p-adic}. However, its main interest is that it also covers $p$-torsion groups, where $p$ is an arbitrary prime. This will be concretized through the following statement. 

\begin{thm}[See Theorem~\ref{thm:bilinear}]\label{thmintro:bilinear}
Let $k$ be a non-discrete locally compact field, and $A, N$ be finite-dimensional $k$-vector spaces. Let $G$ be a locally compact group that is a central extension of $A$ by $N$. If the map $A \times A \to N$ induced by the commutator map on $G$ is $k$-bilinear, then $G$ is type~$\I$. 
\end{thm}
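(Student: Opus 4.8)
The plan is to verify condition~(iii) of Corollary~\ref{corintro:type-I-char}. Write $Z$ for the centre of $G$; since $N$ is a closed central subgroup with $G/N \cong A$ abelian, $G$ is two-step nilpotent, and it is second countable, being an extension of a second countable group by a second countable group, so the corollary applies. Denote by $b \colon A \times A \to N$ the $k$-bilinear map induced by the commutator map. First I would identify the relevant quotient: an element $g \in G$ lies in $Z$ if and only if its image $\bar g \in A$ lies in the radical $A_0 := \{a \in A : b(a, A) = 0\}$, which is a $k$-subspace of $A$ (by $k$-bilinearity of $b$), hence a closed subgroup. Thus $Z$ is the preimage of $A_0$, and $V := G/Z \cong A/A_0$ carries a natural structure of finite-dimensional $k$-vector space, with $b$ descending to a $k$-bilinear form $\bar b \colon V \times V \to N$.

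Next I would reduce the coefficients of the pairing from $N$ to $k$. Because $[g, h] \in N$ for all $g, h \in G$, the bicharacter $\omega_\chi$ depends only on the restriction $\psi := \chi|_N \in \widehat N$, and its value is $\omega_\chi(\bar g)(\bar h) = \psi(\bar b(\bar g, \bar h))$ (viewing $\bar g, \bar h \in V$). Fix once and for all a non-trivial character $\tau$ of the additive group of $k$. By Pontryagin self-duality of the non-discrete locally compact field $k$, one may write $\psi = \tau \circ \lambda$ for a $k$-linear functional $\lambda \colon N \to k$, and then $\beta := \lambda \circ \bar b \colon V \times V \to k$ is $k$-bilinear and $\omega_\chi$ is the continuous homomorphism $V \to \widehat V$ given by $v \mapsto \bigl(w \mapsto \tau(\beta(v, w))\bigr)$.

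The heart of the argument is then a short observation: endow $\widehat V$ with the $k$-vector space structure $(s \cdot \phi)(w) := \phi(sw)$; self-duality of $k$ shows that $\widehat V$ is finite-dimensional over $k$ (of dimension $\dim_k V$), and the map $\omega_\chi$ is in fact $k$-\emph{linear}, since $\omega_\chi(sv)(w) = \tau(\beta(sv, w)) = \tau(s\,\beta(v, w)) = \tau(\beta(v, sw)) = (s \cdot \omega_\chi(v))(w)$ — the middle equality being precisely where the $k$-bilinearity hypothesis is used. Therefore the image of $\omega_\chi$ is a $k$-linear subspace of the finite-dimensional $k$-vector space $\widehat V$, and every such subspace is closed (over a non-discrete locally compact field, finite-dimensional vector spaces carry their unique Hausdorff vector space topology, in which all subspaces are closed). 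Since $\chi \in \widehat Z$ was arbitrary, Corollary~\ref{corintro:type-I-char} yields that $G$ is type~$\I$, in fact CCR.

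I do not anticipate a serious obstacle: once the criterion of Corollary~\ref{corintro:type-I-char} is in hand, the only steps needing care are the bookkeeping that identifies $G/Z$ with the $k$-vector space $A/A_0$ and the use of self-duality of $k$ to replace the $N$-valued form $\bar b$ by a $k$-valued one; after that, the $k$-linearity of $\omega_\chi$ makes closedness of the image immediate. The one point worth double-checking is that the $k$-module structure on $\widehat V$ defined by precomposition is genuinely that of a $\dim_k V$-dimensional $k$-vector space, which again is a direct consequence of the self-duality $\widehat k \cong k$.
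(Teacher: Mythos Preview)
Your proof is correct and follows essentially the same route as the paper: invoke Corollary~\ref{corintro:type-I-char}, put a $k$-vector space structure on the Pontryagin dual via $(s\cdot\phi)(w)=\phi(sw)$, and use $k$-bilinearity of the commutator map to see that the image of $\omega_\chi$ is a $k$-linear subspace, hence closed. The paper works directly with $A$ rather than passing to $V=A/A_0$, and it does not factor $\chi|_N$ as $\tau\circ\lambda$ but simply observes that the image is stable under scalar multiplication; these are cosmetic differences and the core idea is identical.
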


It is important to keep in mind that the commutator map $A \times A \to N$ is automatically bi-additive. Theorem~\ref{thmintro:bilinear} allows us to establish, in a uniform fashion, that several natural classes of unipotent algebraic groups over local fields of arbitrary characteristic are type~$\I$.

\begin{cor}\label{corintro:algebraic}
Let $k$ be a non-discrete locally compact field. 
The following locally compact two-step nilpotent groups are all type~$\I$:
\begin{enumerate}[(1)]
\item The $2n+1$-dimensional Heisenberg group over $k$, for all $n \geq 1$. 
\item Given a two-step nilpotent Lie algebra $\mathfrak g$ over $k$ with Lie bracket $[\cdot, \cdot]_{\mathfrak g}$, the unipotent group $E(\mathfrak g)$ with underlying set $\mathfrak g$ and multiplication defined by $v.w = v+ w +  [v, w]_{\mathfrak g}$.
\item The unipotent radical of a minimal $k$-parabolic subgroup in an absolutely simple algebraic $k$-group of $k$-rank one. 
\end{enumerate}
\end{cor}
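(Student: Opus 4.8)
The plan is to derive all three items from Theorem~\ref{thmintro:bilinear}. Recall that the commutator map of a two-step nilpotent group is automatically bi-additive; Theorem~\ref{thmintro:bilinear} says that over a non-discrete locally compact field $k$ it suffices to upgrade this to $k$-\emph{bilinearity} of the map induced on the vector-group quotient. So for each group $G$ in the list the task is to present $G$ as a central extension $1 \to N \to G \to A \to 1$ of finite-dimensional $k$-vector groups and to check that the induced commutator map $A \times A \to N$ is $k$-bilinear.

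Item~(1) is immediate (and is a special case of~(2)): the $(2n+1)$-dimensional Heisenberg group over $k$ is the central extension $1 \to k \to H_{2n+1}(k) \to k^{2n} \to 1$ whose commutator map $k^{2n} \times k^{2n} \to k$ is (twice) the standard symplectic form, which is $k$-bilinear. For item~(2), one first checks that $E(\mathfrak g)$ is a group: associativity, together with the identities $1 = 0$ and $v^{-1} = -v$, follows from $[[\cdot,\cdot]_{\mathfrak g},\cdot]_{\mathfrak g} \equiv 0$. A short computation then shows that the group commutator in $E(\mathfrak g)$ is $(v,w) \mapsto 2[v,w]_{\mathfrak g}$; in particular it takes values in the central subspace $[\mathfrak g,\mathfrak g]$. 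Hence $E(\mathfrak g)$ is a central extension $1 \to [\mathfrak g,\mathfrak g] \to E(\mathfrak g) \to \mathfrak g/[\mathfrak g,\mathfrak g] \to 1$ of $k$-vector groups whose induced commutator map, being $2[\cdot,\cdot]_{\mathfrak g}$, is $k$-bilinear; Theorem~\ref{thmintro:bilinear} applies. (If $\mathrm{char}\,k = 2$ this map vanishes, i.e.\ $E(\mathfrak g)$ is abelian, and there is nothing to prove.)

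For item~(3), let $G$ be absolutely simple of $k$-rank one, $S \le G$ a maximal $k$-split torus, $P$ a minimal $k$-parabolic containing $S$, and $U = R_u(P)$; the relative root system is $A_1$ or $BC_1$, and $U$ coincides with the root group $U_a$. Invoking the structure theory of reductive groups over a field (Borel--Tits), $U$ is two-step nilpotent: writing $V := U_{2a}$ (with $V$ trivial in the $A_1$ case) one has $[U,U] \subseteq V$, and $V$ is central in $U$ because $3a$ and $4a$ are not roots. In the $A_1$ case $U$ is abelian, hence type~$\I$. In the $BC_1$ case the structural input is that $V$ and $W := U/V = U_a/U_{2a}$ are $k$-vector groups carrying a linear $S$-action, with $S \cong \mathbb{G}_m$ acting on $W$ through the character $a$ and on $V$ through $2a$; here it is essential that $G$ is genuinely reductive, which guarantees that these relative root groups are split, i.e.\ honest vector groups, and not the ``wound'' unipotent groups that make the general positive-characteristic problem hard. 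Passing to $k$-points (using $H^1(k,V) = 0$ for the vector group $V$) presents $U(k)$ as a central extension $1 \to V(k) \to U(k) \to W(k) \to 1$ of $k$-vector groups, and it remains to see that the induced commutator map $c \colon W \times W \to V$ is $k$-bilinear rather than merely bi-additive. Here the rank-one hypothesis does the work: fixing an isomorphism $\lambda \colon \mathbb{G}_m \to S$ with $m := \langle a,\lambda\rangle \ne 0$, $S$-equivariance of $c$ gives the polynomial identity $c(t^m w_1, t^m w_2) = t^{2m} c(w_1, w_2)$; writing a bi-additive morphism of $k$-vector groups in characteristic $p$ as $c_\ell(x,y) = \sum_{i,j} c^{(\ell)}_{ij}\, x_i^{p^{\rho_{ij}}} y_j^{p^{\sigma_{ij}}}$ and comparing powers of $t$ forces $p^{\rho_{ij}} + p^{\sigma_{ij}} = 2$, hence $\rho_{ij} = \sigma_{ij} = 0$, on every non-zero term (in characteristic $0$ there is nothing to do). Thus $c$ is $k$-bilinear and Theorem~\ref{thmintro:bilinear} finishes the proof.

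The main obstacle is item~(3), and within it the structure-theoretic preliminaries: one must invoke carefully that the relative root groups of a rank-one reductive $k$-group are $k$-vector groups on which $S$ acts linearly, with the $S$-weight on $V$ being twice that on $W$. Granting this, the content of the rank-one hypothesis is precisely that only the roots $a$ and $2a$ intervene, which is exactly what allows the split-torus equivariance argument to promote bi-additivity to $k$-bilinearity; in higher rank the analogous commutator maps need not be $k$-bilinear, so this step is genuinely rank-one in nature.
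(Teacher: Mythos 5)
Your proposal is correct, and for items (1) and (2) it coincides with the paper's proof (the paper dismisses these as following ``from the definitions''; your computation that the group commutator of $E(\mathfrak g)$ equals $2[\cdot,\cdot]_{\mathfrak g}$ is exactly the content of that remark). For item (3) you take a genuinely different route. The paper first disposes of $k=\R,\C$ via Corollary~\ref{cor:connected+p-adic}, then invokes the classification of absolutely simple $k$-rank one groups over non-Archimedean local fields (all of classical type), realizes $U$ as the root group of a Moufang set of skew-hermitian type attached to a pseudo-quadratic space $(D,D_0,\sigma,V,p)$, and reads off the commutator map explicitly as $(v,w)\mapsto h(w,v)+h(w,v)^\sigma$, which is visibly bilinear over $k=\mathrm{Fix}_K(\sigma)$ though not over $K$. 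You instead avoid the classification and the Moufang-set machinery altogether: beyond the Borel--Tits fact that $U_{(a)}/U_{(2a)}$ and $U_{(2a)}$ are $k$-vector groups (which the paper also cites), you use that the split torus $S$ acts on them linearly by the homotheties $a(s)$ and $a(s)^2$, and then the $\mathbb{G}_m$-equivariance $c(t^m w_1,t^m w_2)=t^{2m}c(w_1,w_2)$ forces every Frobenius-twisted term $x_i^{p^\rho}y_j^{p^\sigma}$ with $(\rho,\sigma)\neq(0,0)$ in the bi-additive polynomial expression of $c$ to vanish, since $p^\rho+p^\sigma=2$ has no other solution. This weight argument is uniform in $k$ (no Archimedean/non-Archimedean split, no appeal to Tits's tables) and isolates cleanly why the rank-one hypothesis upgrades bi-additivity to $k$-bilinearity; what it costs is the need to quote carefully that the relative root groups are genuinely split vector groups with linear $S$-action of the stated weights, which is indeed part of \cite[\S 3.17]{BT65}. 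Both arguments are valid; yours is arguably the more structural one.
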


\begin{rem} \ 
\begin{enumerate}[(1)]
\item In view of Corollary~\ref{corintro:type-I-char}, the item~(1) in Corollary~\ref{corintro:algebraic} recovers and strengthens \cite[Theorem~5.15]{Car24}. 

\item The map $\mathfrak g \mapsto E(\mathfrak g)$ in Corollary~\ref{corintro:algebraic}(2) is closely related to the so-called \textbf{Lazard correspondence}. The group  $E(\mathfrak g)$ is abelian if  $\mathrm{char}(k) = 2$, but non-abelian if $\mathrm{char}(k) \neq 2$ and  $\mathfrak g$ is non-commutative. 

\item The groups in Corollary~\ref{corintro:algebraic}(3) include non-abelian groups in all characteristics.

\end{enumerate}

\end{rem}

\begin{rem}
As mentioned above, it is currently unknown whether all unipotent algebraic groups over local fields of positive characteristic are type~$\I$, see  \cite[Remark~1(iv)]{BE21}. An alternative approach to this problem is provided by the work of Echterhoff--Kl{\"u}ver~\cite{EK12}, developing the Kirillov orbit method for rather general nilpotent groups, following pioneering work of R.~Howe \cite{How77}. This approach has the advantage that it applies to nilpotent groups of higher nilpotency class $c$, and that it provides a description of the unitary dual for type~$\I$ groups. However, there are  several important technical restrictions. One of  them is that the group should not contain $p$-torsion for any prime $p \leq c$; another one is a condition of \textit{regularity} that is rather restrictive in positive characteristic\footnote{In Example~8.2 of \cite{EK12}, it is claimed that if $p$ is a prime and $G$ is a nilpotent locally compact group of nilpotency class $c< p$ in which every element is contained in a pro-$p$ subgroup, then the regularity condition automatically holds for $G$. That claim is  inaccurate: indeed, in the additive group $\mathfrak g= \mathbb F_p(\!(t)\!)$, the sum of two closed subgroups can fail to be closed (e.g. the  subgroups  $\mathfrak a = \mathbb F_p[t^{-1}]$ and $\mathfrak b = \mathbb F_p[t^{-1} + t]$ are both discrete, and their sum is dense).}. 
\end{rem}

The last part of this paper aims at providing additional explicit examples of two-step nilpotent groups illustrating the type~$\I$ dichotomy. We focus on groups that are central extensions of the additive group $A = \F_p(\!(t)\!)$ by itself, where $p$ is an arbitrary prime. We require that the commutation relations induce a map $A \times A \to A$ that sends pairs $(t^a, t^b)$   to monomials (see Definition~\ref{def:monomial}). We establish criteria for such groups to be type~$\I$ (see Theorem~\ref{thm:monomial-type-I}) and non-type~$\I$ (see Theorem~\ref{thm:non-type-I}). As an application, we obtain the following result concerning the family of two-step nilpotent groups $A \times_{\eta_s } A$ introduced by Gl\"ockner--Willis \cite[Section~8]{GW21}. A group in that family is defined by a cocycle $\eta_s$ determined by a sequence $s \colon \NN_{>0} \to \{0,1\}$. The precise definition will be recalled in Section~\ref{sec:GW}.

\begin{thm}\label{thmintro:GW-type-I}
Let  $p$ be prime. Let $s\in \{0,1\}^{\mathbb{N}_{>0}}$ and  $G = A \times_{\eta_s} A$ the group mentioned above. 

Then $G$ is type~$\I$ as soon as the sequence $s$ satisfies any of the following conditions  for some integer $c \geq 0$.
\begin{enumerate}[(1)]

\item  $s(z) =1 $ for all $z$ with $z > c$. 

\item $s(z)= 0$ for all even $z$, and $s(z) =1 $ for all  odd $z$ with $z > c$. 

\item There is an integer $d>0$ such that $s(z) = 0$ for all $z \not \in d\NN$, and  $s(z) =1$ for all   $z \in d\NN$ with $z > c$. \end{enumerate}

On the other hand,  $G$ is not type~$\I$ if $s$ satisfies the following.
\begin{enumerate}[(1)] \setcounter{enumi}{3} 
\item For some integer $d>0$, we have  $s(d)=1$ and $s(dn) = 0$  for all $n \geq 2 $.

\end{enumerate}

\end{thm}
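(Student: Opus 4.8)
The plan is to recognize $G = A \times_{\eta_s} A$ as one of the \emph{monomial} two-step nilpotent central extensions of $A = \F_p(\!(t)\!)$ by itself in the sense of Definition~\ref{def:monomial}, and then to apply the type~$\I$ criterion of Theorem~\ref{thm:monomial-type-I} for parts~(1)--(3) and the non-type~$\I$ criterion of Theorem~\ref{thm:non-type-I} for part~(4). The guiding principle is Corollary~\ref{corintro:type-I-char}: since $G$ is two-step nilpotent and second countable, it is type~$\I$ (equivalently CCR) exactly when $\omega_\chi \colon G/Z \to \widehat{G/Z}$ has closed image for every $\chi \in \widehat Z$. If $s \equiv 0$ then $G$ is abelian and there is nothing to prove, so we may assume $s \not\equiv 0$; then the alternating bi-additive commutator map $\beta_s \colon A \times A \to A$ attached to $\eta_s$ is non-degenerate, $Z = \{0\} \times A \cong A$, $G/Z \cong A$, and $\omega_\chi(a)(b) = \chi(\beta_s(a,b))$. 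The preliminary task is to unwind the cocycle $\eta_s$ recalled in Section~\ref{sec:GW} and to read off the monomial data attached to it: one checks that $\beta_s(t^i, t^j)$ is, up to sign, a single power of $t$ whose coefficient is governed by the value of $s$ at a prescribed function of $i$ and $j$. This exhibits $G$ as a monomial group, so that Theorems~\ref{thm:monomial-type-I} and~\ref{thm:non-type-I} apply, and the problem reduces to a combinatorial verification on $s$.

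For parts~(1)--(3) I would check that $s$ satisfies the closedness hypothesis of Theorem~\ref{thm:monomial-type-I}. The key point is that in each of the three cases the support $\{z : s(z) = 1\}$ agrees, outside the finite set $\{z \le c\}$, with a full arithmetic progression: all of $\NN_{>0}$ in~(1), the odd positive integers in~(2), and $d\NN$ in~(3). Since the criterion of Theorem~\ref{thm:monomial-type-I} is insensitive to modifications of $s$ on finitely many indices, one is reduced to the three ``pure'' sequences that are the indicator functions of $\NN_{>0}$, of the odd integers, and of $d\NN$; for those, the subgroups of $\widehat A \cong A$ that assemble $\mathrm{im}(\omega_\chi)$ form sums indexed by a full progression and are therefore closed. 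Hence each such $G$ is type~$\I$, and in fact CCR by Corollary~\ref{corintro:type-I-char}.

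For part~(4) I would instead produce a single character witnessing the failure of the closed-image condition, which is exactly what Theorem~\ref{thm:non-type-I} is built to detect. Under the hypothesis $s(d) = 1$ and $s(dn) = 0$ for all $n \ge 2$, the intersection of $\supp(s)$ with the progression $d\NN$ reduces to the single point $d$; this sparseness forces, for a suitable $\chi \in \widehat Z$, the image $\mathrm{im}(\omega_\chi)$ to be a dense but proper (hence non-closed) subgroup of $\widehat{G/Z}$, by the same mechanism that makes $\F_p[t^{-1}] + \F_p[t^{-1}+t]$ dense but not closed in $\F_p(\!(t)\!)$ (cf.\ the footnote in the introduction). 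Theorem~\ref{thm:non-type-I} then yields that $G$ is not type~$\I$.

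The step I expect to be the main obstacle is the passage through the first two items: pinning down precisely the monomial data carried by $\eta_s$, confirming that the finite exceptional set $\{z \le c\}$ is genuinely irrelevant to the criterion of Theorem~\ref{thm:monomial-type-I}, and --- for~(2) and~(3) --- keeping careful track of the interaction between the arithmetic progression carrying $\supp(s)$ and the grading of $A = \F_p(\!(t)\!)$ by powers of $t$, which is precisely the feature distinguishing the type~$\I$ cases~(1)--(3) from the non-type~$\I$ case~(4).
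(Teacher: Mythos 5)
Your overall strategy (reduce to Theorems~\ref{thm:monomial-type-I} and~\ref{thm:non-type-I}) is the right one, but the first step of your plan fails as stated: $G = A \times_{\eta_s} A$ is \emph{not} a group with monomial commutation relations in the sense of Definition~\ref{def:monomial}. That definition requires $\gamma(t^m,t^n) = u(m,n)\,t^{m+n}$, whereas the commutator map attached to $\eta_s$ satisfies $\gamma_s(t^a,t^b) = \pm s(\tfrac{|a-b|}{2})\, t^{(a+b)/2}$ when $a \equiv b \bmod 2$ and vanishes when $a \not\equiv b \bmod 2$ (see Equation~(\ref{eq:comm-GW})). The exponent is $(a+b)/2$, not $a+b$, so Theorems~\ref{thm:monomial-type-I} and~\ref{thm:non-type-I} cannot be applied directly to $G$. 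The missing idea is the decomposition in Proposition~\ref{prop:basic-GW}: one splits $G = G_0 G_1$ according to the parity of the valuation on $G/N \cong A$, observes that $G_0$ and $G_1$ are closed normal subgroups that commute and are isomorphic to each other, and only after rescaling the grading (identifying $t^{2n}$ in $G_0/N$ and $t^{2n}$ in $N$ with $t^n$) does $G_0$ become a monomial group of type $\sigma$ with $\sigma_z = s(z)$ for $z>0$. The combinatorial translation of conditions (1)--(4) into the hypotheses of the two theorems is then immediate and does not require any separate argument about ``insensitivity to finitely many indices'': the constant $c$ is already built into the hypotheses of Theorem~\ref{thm:monomial-type-I}.

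A second, related gap concerns the reduction of the type~$\I$ property of $G$ to that of $G_0$. The easy direction ($G_0$ type~$\I$ implies $G$ type~$\I$) follows from the surjection $G_0 \times G_1 \to G$. But for your part~(4) you need the converse: $G_0$ not type~$\I$ implies $G$ not type~$\I$. Since $A_0 = \F_p(\!(t^2)\!)$ has infinite index in $A$, the subgroup $G_0$ is closed but not open in $G$, and type~$\I$ does not in general pass to closed subgroups; so this implication is not formal. The paper proves it (Proposition~\ref{prop:basic-GW}(iii)) by choosing a character $\chi$ witnessing the failure for $G_0$, arranging a compact open subgroup $U = U_0U_1$, and using $O_{\chi,U_0} \leq O_{\chi,U}$ together with Proposition~\ref{prop:tdlc-step2-nilp} and Corollary~\ref{cor:type-I-char} to propagate the failure of the closed-image condition from $G_0$ to $G$. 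Without this step your argument for case~(4) does not go through.
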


It is proved in \cite[Theorem~5.9]{Car24} that if  $s$ is non-zero and finitely supported, then $G$ is not type~$\I$. Theorem~\ref{thmintro:GW-type-I}(4) recovers and strengthens that fact. 

The following consequence of Theorem~\ref{thmintro:GW-type-I} (see also  Proposition~\ref{prop:GW}) provides a positive solution to \cite[Problem~5.13]{Car24}. 

\begin{cor}
For a given prime $p$, the family of groups $A \times_{\eta_s} A$ contains infinitely many pairwise non-isomorphic type~$\I$ groups, and uncountably many pairwise non-isomorphic non-type~$\I$ groups. 
\end{cor}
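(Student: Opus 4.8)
The plan is to exhibit the two families explicitly from Theorem~\ref{thmintro:GW-type-I}, and then to control the number of isomorphism classes by invoking the isomorphism information recorded in Proposition~\ref{prop:GW}. Throughout, $A = \F_p(\!(t)\!)$, and the corollary is read as asserting lower bounds on the number of isomorphism classes in each category.

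For the type~$\I$ part, for each integer $d \geq 1$ take $s_d \in \{0,1\}^{\NN_{>0}}$ to be the indicator of $d\NN_{>0}$, that is, $s_d(z) = 1$ if $d \mid z$ and $s_d(z) = 0$ otherwise. Condition~(3) of Theorem~\ref{thmintro:GW-type-I}, applied with $c = 0$, then shows that $G_d := A \times_{\eta_{s_d}} A$ is type~$\I$. It remains to check that the $G_d$ fall into infinitely many isomorphism classes, and this is what Proposition~\ref{prop:GW} is for: it extracts from the topological group $A \times_{\eta_s} A$ a combinatorial invariant of the sequence $s$ fine enough to distinguish $s_d$ from $s_{d'}$ when $d \neq d'$ (the relevant invariant being, e.g., the minimal period of $s$, which equals $d$ for $s = s_d$). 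Hence $G_d \not\cong G_{d'}$ for $d \neq d'$, and the family contains countably infinitely many pairwise non-isomorphic type~$\I$ groups.

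For the non-type~$\I$ part, fix $d = 2$ and let $S \subseteq \{0,1\}^{\NN_{>0}}$ consist of the sequences $s$ with $s(2) = 1$ and $s(z) = 0$ for every even $z \geq 4$, the values $s(z)$ with $z$ odd being unconstrained; thus $|S| = 2^{\aleph_0}$. By condition~(4) of Theorem~\ref{thmintro:GW-type-I} (with $d = 2$), every $A \times_{\eta_s} A$ with $s \in S$ fails to be type~$\I$. To pass from ``uncountably many sequences'' to ``uncountably many groups'', one again invokes Proposition~\ref{prop:GW}: it implies that an isomorphism $A \times_{\eta_s} A \cong A \times_{\eta_{s'}} A$ constrains $s$ and $s'$ by an explicit equivalence whose classes are at most countable. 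Consequently each isomorphism class meets $S$ in at most countably many points, and the cardinality estimate $2^{\aleph_0} = |S| \le \aleph_0 \cdot \#\{\text{isomorphism classes meeting } S\}$ forces uncountably many isomorphism classes of non-type~$\I$ members.

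The type~$\I$ versus non-type~$\I$ verdicts are immediate once Theorem~\ref{thmintro:GW-type-I} is in hand, so the only substantive point — and the step I expect to be the main obstacle — is Proposition~\ref{prop:GW}: one must read off, from the isomorphism type of the totally disconnected group $A \times_{\eta_s} A$, an invariant of the defining sequence $s$ that is simultaneously fine enough to separate the $G_d$ and coarse enough to keep the isomorphism classes meeting $S$ countable. Everything else is bookkeeping with cardinals.
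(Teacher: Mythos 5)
Your argument is correct and follows essentially the same route the paper intends: the type~$\I$ and non-type~$\I$ verdicts come from Theorem~\ref{thmintro:GW-type-I} applied to the explicit families you exhibit, and pairwise non-isomorphism comes from Proposition~\ref{prop:GW}. The only simplification to note is that Proposition~\ref{prop:GW} states outright that \emph{distinct sequences yield non-isomorphic groups}, so the hedging about extracting an invariant or a countable equivalence is unnecessary: each isomorphism class meets your set $S$ in exactly one point, and the $G_d$ are pairwise non-isomorphic simply because the $s_d$ are pairwise distinct.
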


\begin{rem}
The sequences giving rise to type~$\I$ groups in Theorem~\ref{thmintro:GW-type-I} are asymptotically periodic, but not asymptotically zero. However,  the precise boundary between type~$\I$ groups and non-type~$\I$ groups remains unclear. Notice in particular that, according to
Theorem~\ref{thmintro:GW-type-I}, the group $G$ is type~$\I$ if $s = (1, 0, 1, 0, 1, 0, 1, \dots)$ but not type~$\I$ if $s= (1, 1, 1, 0, 1, 0, 1, \dots)$. 
\end{rem}

The class of type~$\I$ groups enjoys various stability properties with respect to basic algebraic constructions: e.g. passing to open subgroups, passing to quotient groups, forming finite direct products,  see \cite[Proposition~6.E.21]{BH20}. It is moreover known that if $G$ is a locally compact group admitting a closed normal subgroup $N$ such that $G/N$ is compact, then $G$ is type~$\I$ as soon as $N$ is so. Recently, A.~Chirvasitu \cite[\S 2.2]{Chi23} has shown that the converse does not hold, even within the class of two-step nilpotent groups: in general the type~$\I$ property need not be inherited by a closed cocompact normal subgroup. By revisiting Chirvasitu's construction in the light of the results of this paper, we establish the following. It applies notably to all the contraction groups appearing in Theorem~\ref{thmintro:GW-type-I}, that are further discussed in Section~\ref{sec:GW}.

\begin{thm}\label{thmintro:type-I-ext}
Let $G$ be a two-step nilpotent second countable locally compact group with center $Z$. Let $N \leq G$ be a closed subgroup with $[G, G] \leq N \leq Z$. Let $p$ be a prime, and suppose that $N$ and $G/N$ both have exponent~$p$. 

Then for each character~$\chi \in \widehat N$, the quotient $G/\ker(\chi)$ continuously embeds as a closed cocompact normal subgroup in a  two-step nilpotent locally compact group that is type~$\I$. 
\end{thm}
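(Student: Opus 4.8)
The plan is to realise $H:=G/\ker(\chi)$ as a closed cocompact normal subgroup of a ``symplectic completion'' of itself; this reproduces, for a central subgroup of order $p$, the mechanism behind Chirvasitu's examples of type~$\I$ groups having non-type~$\I$ closed cocompact normal subgroups.

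\emph{Reductions.} If $\chi$ restricts trivially to $[G,G]$, then $[G,G]\le\ker(\chi)$, so $H$ is abelian, hence type~$\I$, and there is nothing to prove; assume henceforth $\chi|_{[G,G]}\neq 1$. Since $N$ has exponent $p$, the character $\chi$ takes values in the group of $p$-th roots of unity, $\ker(\chi)$ has index $p$ in $N$, and $\bar N:=N/\ker(\chi)=[H,H]$ is a central subgroup of $H$ of order $p$. The quotient $W:=H/\bar N$ is isomorphic to $G/N$, hence is a second countable locally compact abelian group of exponent $p$; the commutator of $H$ descends to a continuous alternating $\F_p$-bilinear form $b\colon W\times W\to\bar N$; and $H$ is the central extension of $W$ by $\bar N$ attached to a continuous $2$-cocycle $\sigma\colon W\times W\to\bar N$ whose anti-symmetrisation $(w,w')\mapsto\sigma(w,w')-\sigma(w',w)$ is $b$. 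Finally $W$ has a compact open subgroup, so $W\cong W_{\mathrm c}\times W_{\mathrm d}$ with $W_{\mathrm c}$ profinite of exponent $p$ and $W_{\mathrm d}$ a countable discrete $\F_p$-vector space.

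\emph{The construction.} The heart of the argument is to produce a second countable locally compact abelian group $\tilde W$ of exponent $p$ and a continuous $2$-cocycle $\tilde\sigma\colon\tilde W\times\tilde W\to\bar N$ such that \emph{(i)} $W$ embeds into $\tilde W$ as a closed cocompact subgroup with $\tilde\sigma|_{W\times W}=\sigma$, and \emph{(ii)} the anti-symmetrisation $\tilde b$ of $\tilde\sigma$ induces, on the quotient of $\tilde W$ by its radical $\{w:\tilde b(w,\tilde W)=0\}$, a \emph{perfect} alternating pairing, i.e. one giving a topological isomorphism with the Pontryagin dual. One then lets $\tilde H$ be the central extension of $\tilde W$ by $\bar N$ defined by $\tilde\sigma$. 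When $W$ is discrete and $b$ is non-degenerate this is transparent: $\widehat W$ is compact, one puts $\tilde W:=W\oplus\widehat W$ (so $W$ is closed and cocompact), $\tilde b\bigl((w,f),(w',f')\bigr):=b(w,w')+f'(w)-f(w')$ (a perfect pairing), and $\tilde\sigma\bigl((w,f),(w',f')\bigr):=\sigma(w,w')+f'(w)$. In general one first passes to $V:=W/\mathrm{rad}(b)$, on which the induced form $\bar b$ is non-degenerate. A short argument --- the image of $V_{\mathrm c}$ in $\widehat V$ under $v\mapsto\bar b(v,\cdot)$ is compact, hence has finite image in the discrete group $\widehat{V_{\mathrm c}}$, so $V_{\mathrm c}$ embeds, up to finite index, into the compact group $\widehat{V_{\mathrm d}}$ --- shows that non-degeneracy forces the profinite rank of $V$ to be dominated by its discrete rank; consequently $\tilde V:=V\times\widehat{V_{\mathrm d}}$ is again self-dual and contains $V$ as a closed cocompact subgroup. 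One completes $\bar b$ to a perfect pairing on $\tilde V$ by pairing the new profinite factor against $V$ through a carefully chosen open subgroup of $V$, and then transports this back along $W\twoheadrightarrow V$ and lifts it to a $2$-cocycle $\tilde\sigma$ on $\tilde W$ with the required properties.

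\emph{Conclusion.} Granting this, $\tilde H$ is two-step nilpotent; since $\tilde\sigma|_{W\times W}=\sigma$, the preimage of $W\le\tilde W$ under $\tilde H\twoheadrightarrow\tilde W$ is precisely $H$, which is therefore closed, normal (the preimage of a normal subgroup) and cocompact, as $\tilde H/H\cong\tilde W/W$ is compact; the associated homomorphism $H\to\tilde H$ is continuous and injective. It remains to see that $\tilde H$ is type~$\I$. One has $\tilde H/Z(\tilde H)\cong\tilde W/\mathrm{rad}(\tilde b)$, so by Corollary~\ref{cor:type-I-char} it suffices to check that $\omega_\psi$ has closed image for every $\psi\in\widehat{Z(\tilde H)}$. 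Since $\tilde b$ is $\bar N$-valued, $\omega_\psi$ depends only on $\psi|_{\bar N}$: if this is trivial then $\omega_\psi=0$; otherwise $\psi|_{\bar N}$ is a faithful character of $\bar N$ and, $\tilde W/\mathrm{rad}(\tilde b)$ having exponent $p$, the map $\omega_\psi$ is a topological isomorphism --- the composite of the isomorphism induced by the perfect pairing $\tilde b$ with a topological automorphism of the dual --- and in particular has closed image.

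\emph{The main obstacle.} Everything rests on step \emph{(ii)}: completing $b$ to a perfect pairing on a \emph{cocompact} enlargement of $W$. The delicate point is the profinite part --- if $W_{\mathrm c}$ is infinite then $b|_{W_{\mathrm c}}$ is forced to be ``almost trivial'' (the image of $W_{\mathrm c}$ under $w\mapsto b(w,\cdot)|_{W_{\mathrm c}}$ is a compact, hence finite, subgroup of the discrete group $\widehat{W_{\mathrm c}}$), so $W_{\mathrm c}$ must be paired off against the freshly added profinite directions, which have to be chosen (equivalently: the relevant open subgroup of $V$ has to be chosen) so as not to reintroduce a radical. A secondary subtlety, invisible at the level of forms, is that $H$ itself need not have exponent $p$ (only $p^2$), so the completion must be performed with $2$-cocycles rather than alternating forms; in particular for $p=2$ one cannot ``halve $b$'' and must use a genuinely non-symmetric bi-additive cocycle, as in the discrete model above.
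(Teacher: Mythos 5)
Your construction, in the case you actually carry out (the ``discrete model'' $\tilde W=W\oplus\widehat W$ with cocycle $\tilde\sigma\bigl((w,f),(w',f')\bigr)=\sigma(w,w')+f'(w)$), is precisely the paper's construction: with $A=G/N$, $Q=G/\ker(\chi)$ and $C=N/\ker(\chi)\cong\ZZ/p$, the paper forms $E=\widehat A\ltimes Q$ for the action $(\varphi,\bar g)\mapsto\varphi(\bar g)\bar g$, which is exactly the central extension of $\widehat A\times A$ by $C$ defined by your cocycle, and proves $E$ is type~$\I$ by showing that for every faithful $\sigma\in\widehat C$ the map $\omega_\sigma$ is \emph{surjective} onto $\widehat{E/\bar C}\cong A\times\widehat A$ (first hitting $\{1\}\times\widehat A$ with $\bar g=1$, then $A\times\{1\}$ using $\varphi_{\bar g}=[\bar g,\cdot]^{-1}$ to absorb the commutator term). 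So the ``perfect pairing'' mechanism is identical.

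The genuine gap is the step you yourself flag as the main obstacle: when $W=H/\bar N$ has an infinite profinite part, you need to complete $b$ to a perfect pairing on an enlargement $\tilde W$ in which $W$ is \emph{cocompact}, and you only sketch this (``pairing the new profinite factor against $V$ through a carefully chosen open subgroup'', then lifting everything to a continuous $2$-cocycle on $\tilde W$ compatible with $\sigma$ and with the radical). None of that is carried out, and it is where all the work would lie. The point you are missing is that this difficulty is entirely avoidable: the theorem does not ask for $W$ to be cocompact in $\tilde W$, only for $H$ to be cocompact in \emph{some} type~$\I$ two-step nilpotent group. The paper therefore builds the full group $E=\widehat A\ltimes Q$ without any self-duality or cocompactness adjustment (if $A$ is not discrete, $\widehat A$ is not compact and $Q$ is not cocompact in $E$), proves $E$ is type~$\I$, and then invokes the fact that the type~$\I$ property passes to open subgroups: for any compact open subgroup $U\leq E$, the group $QU$ is open in $E$, hence type~$\I$, and contains $Q$ as a closed cocompact normal subgroup since $QU/Q\cong U/(U\cap Q)$ is compact. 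If you replace your step \emph{(ii)} by this one-line reduction, your argument closes; as written, the non-discrete case is not proved.
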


For all non-type~$\I$ groups arising in Theorem~\ref{thmintro:GW-type-I}, there is a central character $\chi$ such that $G/\ker(\chi)$ is not type~$\I$ (see Remark~\ref{rem:ex-ext}). Hence Theorem~\ref{thmintro:type-I-ext} yields a broad family of non-type~$\I$ groups embedding as closed cocompact normal subgroups in  two-step nilpotent locally compact groups. 

\medskip		
While we have focused on second countable groups in this introduction, several results mentioned above are established below without any countability assumptions. 

\subsection*{Layout of the article}

In Section \ref{sec:nilp} we establish  Theorem~\ref{thmintro:closed-point} and its consequences, and present various supplements for the class of totally disconnected groups.  In Section \ref{sec:contraction}, we focus on two-step nilpotent groups with monomial commutation relations, finishing with a proof of Theorem~\ref{thmintro:GW-type-I}. Finally, the proof of Theorem~\ref{thmintro:type-I-ext} is given in Section~\ref{sec:ext}. 

\subsection*{Acknowledgements}

We thank Tom De Medts for drawing our attention to his preprint~\cite{TDM}, and for his comments on the proof of Corollary~\ref{corintro:algebraic}. We are grateful to the referee for their comments on an earlier version of this article.


\section{Unitary representation theory of  two-step nilpotent groups}\label{sec:nilp}

Throughout this paper,  we define the commutator of two elements $g, h$ of a group as $[g, h] = g h g^{-1} h^{-1}$. We assume that the reader has some familiarity with the theory of unitary representations of locally compact groups (see \cite{Dix77} and \cite{BH20}) and with Pontryagin duality (see \cite[Ch.~II]{Bou67} and \cite[Ch.~F]{Str06}). 

\subsection{Characterizing closed points in the unitary dual}

Given an irreducible unitary representation $\pi$ of a locally compact group $G$, we denote by $[\pi]$ its equivalence class under unitary equivalence. Thus $[\pi] \in \widehat G$. In the special case where $\pi$ is one-dimensional, we identify $[\pi]$ with the character of $\pi$, which is a complex valued function on $G$. For each closed normal subgroup $L$ of $G$, we define $L^\perp \subseteq \widehat G$ as the closed subset consisting of those characters of $G$ that are trivial on $L$. Note that, if $[G,G] \le L$, then $L^\perp$ is naturally isomorphic to $\widehat{G/L}$, and carries a canonical group structure defined by pointwise multiplication of characters. 
By Schur's lemma, the restriction of an irreducible unitary representation $\pi$ to the center $Z$ of $G$ defines an action of $Z$ by scalar operators. Thus $\pi|_{Z}(z) = \chi_\pi(z) \mathrm{Id}$ for some character $\chi_\pi \in \widehat Z$, called the \textbf{central character} of $\pi$. 

We start by recalling Poguntke's parametrization of the primitive ideal space $\Prim(G)$ of a two-step nilpotent group $G$, which appears as the first proposition in \cite[Part I]{Pog83} and, according to Poguntke,  relies  on ideas due to R.~Howe, see \cite[Prop.~5]{How77} and Kaniuth \cite[Lemma~2]{Kan82}.  We follow the presentation from \cite[\S 5.8]{KT12}. 

So let $G$ be a two-step nilpotent locally compact group with center $Z$. Every character $\chi \in  \widehat Z$ defines a homomorphism 
\begin{equation}\label{eqn:wchi} \omega_\chi \colon G \to Z^\perp : g \mapsto \big( xZ \mapsto \chi([g, x]) \big). \end{equation}
Since $\omega_\chi$ is trivial on $Z$ and since  $Z^\perp$ is canonically isomorphic to $\widehat{G/Z}$, we may as well view $\omega_\chi$ as a homomorphism of abelian groups
$$\omega_\chi \colon G/Z \to \widehat{G/Z}.$$ 

Given $\chi \in \widehat Z$, we set $L_\chi = \{g \in G\mid [g,  G] \subseteq \ker(\chi)\}$. Thus $L_\chi$ is the preimage under the canonical projection $G \to G/\ker(\chi)$ of the center of the quotient group $G/\ker(\chi)$. Clearly we have $\ker(\chi) \leq Z \leq L_\chi$. We also set
$$A_\chi =\{ \alpha \in \widehat{L_\chi} \mid \alpha|_Z = \chi\}.$$
Since $L_\chi/\ker(\chi)$ is abelian, and since characters of closed subgroups of locally compact abelian groups can always be extended (see \cite[Chap.~II, \S 1, no. 7, th.~4]{Bou67}), we infer that $A_\chi$ is non-empty. Set 
$$\mathcal P = \{(\chi, \alpha) \mid \chi \in \widehat Z, \ \alpha \in A_\chi\}.$$
The first part of the following result is known as \textbf{Poguntke's parametrization} of the primitive ideal space $\Prim(G):=\Prim(C^*(G))$. 

\begin{thm}\label{thm:Pog}
Let $G$ be a two-step nilpotent locally compact group. 
\begin{enumerate}[(i)]
\item The map
$$(\chi, \alpha)  \mapsto \cker(\ind_{L_\chi}^G \alpha)$$
establishes a one-to-one correspondence from $\mathcal P$ to $\Prim(G)$. 

\item For all $(\chi_1, \alpha_1), (\chi_2, \alpha_2) \in \mathcal P$, the weak containment $\ind_{L_{\chi_1}}^G \alpha_1 \preceq \ind_{L_{\chi_2}}^G \alpha_2$ is equivalent to the equality $(\chi_1, \alpha_1) =(\chi_2, \alpha_2)$. 

\item Let $\pi$ be an irreducible unitary representation of $G$ with central character $\chi$. Then 
$\pi $ is weakly equivalent to $ \pi \otimes \psi$ for all $\psi \in L_\chi^\perp$. Moreover, the group $L_\chi^\perp$ coincides with the closure of the image of $\omega_\chi$. 

\end{enumerate}
	
\end{thm}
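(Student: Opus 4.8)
The plan is to treat parts~(i) and~(ii) as matters of record and to concentrate the work on~(iii). Assertion~(i) is precisely Poguntke's theorem \cite[Part~I]{Pog83} (building on \cite[Prop.~5]{How77} and \cite[Lemma~2]{Kan82}), which I would cite in the form of \cite[\S5.8]{KT12}; the only care needed is to align conventions, namely the identification $Z^\perp\cong\widehat{G/Z}$, the formula~\eqref{eqn:wchi} for $\omega_\chi$, and the non-emptiness of $A_\chi$, which follows from the extendability of characters of closed subgroups of locally compact abelian groups \cite[Ch.~II,~\S1,~no.~7,~th.~4]{Bou67}. Granting~(i), assertion~(ii) is formal: the weak containment $\ind_{L_{\chi_1}}^G\alpha_1\preceq\ind_{L_{\chi_2}}^G\alpha_2$ is equivalent to the reverse inclusion $\cker(\ind_{L_{\chi_2}}^G\alpha_2)\subseteq\cker(\ind_{L_{\chi_1}}^G\alpha_1)$ of $C^*$-kernels, and by~(i) both of these ideals are primitive; since $\Prim(G)$ is a $T_1$ space for two-step nilpotent $G$ (Corollary~\ref{cor:T1})---equivalently, since the Jacobson closure $\{Q\in\Prim(G):P\subseteq Q\}$ of a singleton $\{P\}$ reduces to $\{P\}$---any inclusion of primitive ideals is an equality, whence by the injectivity in~(i) we get $(\chi_1,\alpha_1)=(\chi_2,\alpha_2)$; the converse is trivial.

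For~(iii) I would first identify $L_\chi^\perp$ with the closure of the image of $\omega_\chi$. Observe that $L_\chi=\ker(\omega_\chi)$ as subgroups of $G$ (since $g\in\ker(\omega_\chi)$ means $\chi([g,x])=1$ for all $x\in G$, i.e.\ $[g,G]\subseteq\ker(\chi)$), that $L_\chi$ is closed and normal ($hgh^{-1}=[h,g]\,g$ with $[h,g]\in[G,G]\subseteq Z$, so $[hgh^{-1},G]=[g,G]$), and that $Z\subseteq L_\chi$. Under the canonical isomorphism $\widehat{\widehat{G/Z}}\cong G/Z$, the dual homomorphism of $\omega_\chi\colon G/Z\to\widehat{G/Z}$ sends $gZ$ to the character $hZ\mapsto\omega_\chi(hZ)(gZ)=\chi([h,g])=\overline{\chi([g,h])}$, so it has the same kernel $L_\chi/Z$ as $\omega_\chi$. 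Since a continuous homomorphism $f$ of locally compact abelian groups satisfies $\overline{\mathrm{im}(f)}=\bigl(\ker(\widehat f)\bigr)^\perp$ (Pontryagin duality, cf.\ \cite[Ch.~II]{Bou67}), this yields $\overline{\mathrm{im}(\omega_\chi)}=(L_\chi/Z)^\perp=L_\chi^\perp$, the last equality using $Z\subseteq L_\chi$. Note that $\mathrm{im}(\omega_\chi)$ itself need not be closed; this is precisely the phenomenon obstructing the type~$\I$ property.

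It remains to show that $\pi$ is weakly equivalent to $\pi\otimes\psi$ for every $\psi\in L_\chi^\perp$. If $\psi=\omega_\chi(gZ)$ lies in the image of $\omega_\chi$, then $\pi(g)\pi(x)\pi(g)^{-1}=\pi(gxg^{-1})=\pi([g,x])\pi(x)=\chi([g,x])\,\pi(x)=(\pi\otimes\psi)(x)$, using $[g,x]\in[G,G]\subseteq Z$ and $\pi|_Z=\chi\cdot\mathrm{Id}$, so $\Ad(\pi(g))$ realizes an honest unitary equivalence $\pi\cong\pi\otimes\psi$. For general $\psi\in L_\chi^\perp=\overline{\mathrm{im}(\omega_\chi)}$, pick a net $(\psi_i)$ in $\mathrm{im}(\omega_\chi)$ with $\psi_i\to\psi$; the matrix coefficients of $\pi\otimes\psi_i$, being those of $\pi$ multiplied by $\psi_i$, converge uniformly on compacta to those of $\pi\otimes\psi$ (all three representations acting on the Hilbert space of $\pi$), so $[\pi\otimes\psi_i]\to[\pi\otimes\psi]$ in the Fell topology, while $\pi\otimes\psi_i\cong\pi$ for all $i$; thus the constant net $[\pi]$ converges to $[\pi\otimes\psi]$, i.e.\ $[\pi\otimes\psi]\in\overline{\{[\pi]\}}=\{[\sigma]\in\widehat G:\sigma\preceq\pi\}$ (cf.\ \cite[\S3.4]{Dix77}), so $\pi\otimes\psi\preceq\pi$. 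Applying this to $\psi^{-1}\in L_\chi^\perp$ gives $\pi\otimes\psi^{-1}\preceq\pi$, and tensoring with the unitary character $\psi$---which preserves weak containment, directly from the definition via functions of positive type---gives $\pi\preceq\pi\otimes\psi$; hence $\pi$ and $\pi\otimes\psi$ are weakly equivalent. (Alternatively: $\psi|_{L_\chi}=1$ forces $(\pi\otimes\psi)|_{L_\chi}=\pi|_{L_\chi}$, and the Poguntke parameter $(\chi,\alpha)$ of an irreducible $\pi$ is recovered from its restriction to $L_\chi$, which is isotypic of type $\alpha$; so $\pi$ and $\pi\otimes\psi$ share the same parameter, hence the same $C^*$-kernel.)

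The only substantial input is Poguntke's theorem, which is used as a black box; within the remaining argument the delicate points are the Fell-topology limiting step (or, in the variant, the precise way the Poguntke parameter is attached to $\pi|_{L_\chi}$) and keeping the Pontryagin-duality bookkeeping straight---in particular the sign coming from $[g,h]^{-1}=[h,g]$, which is exactly what makes the dual homomorphism of $\omega_\chi$ share its kernel.
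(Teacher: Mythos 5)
Your treatment of (i) matches the paper, which simply cites \cite[Theorem~5.66]{KT12}, and your proof of (iii) is a genuinely different and essentially complete argument: the paper again just points to the relevant lines of the proof in \cite{KT12}, whereas you derive $L_\chi^\perp=\overline{\mathrm{im}(\omega_\chi)}$ directly from the Pontryagin identity $\overline{\mathrm{im}(f)}=(\ker\widehat f)^\perp$ together with the computation that $\widehat{\omega_\chi}$ and $\omega_\chi$ share the kernel $L_\chi/Z$, and you get the weak equivalence by combining the honest unitary equivalence $\pi\cong\pi\otimes\omega_\chi(gZ)$ via $\Ad(\pi(g))$ with a Fell-topology limit over a net in $\mathrm{im}(\omega_\chi)$. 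Both steps are correct, and this self-contained route is arguably more informative than the citation.

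There is, however, a genuine circularity in your proof of (ii). You deduce that an inclusion of the two primitive ideals forces equality by invoking the $T_1$ property of $\Prim(G)$, citing Corollary~\ref{cor:T1}; but in the paper that corollary is \emph{proved} from Theorem~\ref{thm:Pog}(i) \emph{and}~(ii), so you are assuming exactly the statement you are trying to establish. Part~(i) alone cannot yield~(ii): a bijective parametrization of $\Prim(G)$ says nothing about the inclusion order among primitive ideals, and the whole content of~(ii) is precisely the $T_1$ separation. The paper avoids this by extracting~(ii) from the internal arguments of the proof of \cite[Theorem~5.66]{KT12} rather than from the statement of~(i). If you want a self-contained fix in the spirit of the rest of your write-up, you can argue as follows: restriction preserves weak containment, and $(\ind_{L_{\chi}}^G\alpha)|_Z$ is a multiple of $\chi$ (as $Z\leq L_\chi$ is central and $\alpha|_Z=\chi$), so $\ind_{L_{\chi_1}}^G\alpha_1\preceq\ind_{L_{\chi_2}}^G\alpha_2$ forces $\chi_1=\chi_2=:\chi$ and hence $L_{\chi_1}=L_{\chi_2}=:L$; moreover each $\alpha_i$ is fixed by $G$-conjugation because $[L,G]\subseteq\ker(\chi)$ and $\alpha_i$ is trivial on $\ker(\chi)$, so $(\ind_{L}^G\alpha_i)|_{L}$ is weakly equivalent to $\alpha_i$, and restricting the weak containment to $L$ gives $\alpha_1\preceq\alpha_2$, i.e.\ $\alpha_1=\alpha_2$ since both are characters. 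This is close to your parenthetical remark at the end of~(iii), but as written your~(ii) does not stand on its own.
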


\begin{proof}
For (i), see \cite[Theorem~5.66]{KT12}. The item (ii) follows from the arguments in the last part of the proof of \cite[Theorem~5.66]{KT12}. The item (iii) is explicitly established in the same proof (see the last few lines on page 260 and the first few lines on page 261 in \cite{KT12}). 
\end{proof}

\begin{cor}\label{cor:T1}
Let $G$ be a two-step nilpotent locally compact group. Given irreducible unitary representations $\pi_1, \pi_2$ of $G$, we have $\pi_1 \preceq \pi_2$ if and only if $\pi_1 \sim \pi_2$. Equivalently,  the space $\Prim(G)$ is $T_1$. 

In particular, a two-step nilpotent locally compact group is type~$\I$ if and only if it is CCR. 
\end{cor}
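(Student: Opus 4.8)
The plan is to read off both assertions from Poguntke's parametrization recorded in Theorem~\ref{thm:Pog}, using only soft facts about weak containment. Recall that for representations $\rho,\sigma$ of $G$ --- equivalently, of $C^*(G)$ --- one has $\rho \preceq \sigma$ if and only if $\cker(\sigma) \subseteq \cker(\rho)$ (kernels taken in $C^*(G)$), and hence $\rho \sim \sigma$ if and only if $\cker(\rho) = \cker(\sigma)$. In particular weak containment and weak equivalence depend only on the C*-kernels, so they are unaffected when one replaces $\rho$ or $\sigma$ by a weakly equivalent representation. The first step is the observation that every irreducible unitary representation $\pi$ of $G$ is weakly equivalent to a representation of the form $\ind_{L_\chi}^G \alpha$ with $(\chi,\alpha) \in \mathcal P$: indeed $\cker(\pi) \in \Prim(G)$, so by Theorem~\ref{thm:Pog}(i) there is a unique pair $(\chi,\alpha)\in\mathcal P$ with $\cker(\pi) = \cker(\ind_{L_\chi}^G\alpha)$, and by the preceding remark this is precisely the assertion $\pi \sim \ind_{L_\chi}^G\alpha$.

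Now let $\pi_1,\pi_2$ be irreducible with $\pi_1 \preceq \pi_2$, and write $\pi_i \sim \ind_{L_{\chi_i}}^G\alpha_i$ with $(\chi_i,\alpha_i)\in\mathcal P$ as above. Stability of weak containment under weak equivalence yields $\ind_{L_{\chi_1}}^G\alpha_1 \preceq \ind_{L_{\chi_2}}^G\alpha_2$, so Theorem~\ref{thm:Pog}(ii) forces $(\chi_1,\alpha_1) = (\chi_2,\alpha_2)$, whence $\cker(\pi_1) = \cker(\ind_{L_{\chi_1}}^G\alpha_1) = \cker(\ind_{L_{\chi_2}}^G\alpha_2) = \cker(\pi_2)$, i.e.\ $\pi_1 \sim \pi_2$; the reverse implication is trivial. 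To pass to $\Prim(G)$, note that a singleton $\{P\}$ is closed in the Jacobson topology precisely when $P$ is maximal among primitive ideals, i.e.\ when $P \subseteq Q$ forces $P = Q$ for every $Q \in \Prim(G)$; writing $P = \cker(\pi_1)$ and $Q = \cker(\pi_2)$ and using $P \subseteq Q \iff \pi_2 \preceq \pi_1$, this is exactly the implication just proved. Hence every point of $\Prim(G)$ is closed, i.e.\ $\Prim(G)$ is $T_1$.

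For the last sentence, CCR trivially implies type~$\I$. For the converse I would invoke the classical fact that a C*-algebra is CCR if and only if it is type~$\I$ and each of its primitive ideals is a maximal ideal --- equivalently, it is type~$\I$ and its primitive ideal space is $T_1$ (see e.g.\ \cite{Dix77}); applied to $C^*(G)$ together with the $T_1$-ness of $\Prim(G)$ established above, this shows that a type~$\I$ two-step nilpotent locally compact group is CCR. The argument is essentially formal once Theorem~\ref{thm:Pog} is granted; the only points requiring care are the standard dictionary between weak containment/equivalence and inclusions/equalities of C*-kernels, keeping track of the direction of inclusions defining the Jacobson topology, and locating a version of the CCR-versus-type~$\I$ statement that is valid without a second-countability hypothesis (so that one does not accidentally invoke Glimm's theorem here).
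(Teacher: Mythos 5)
Your argument is correct and is essentially the paper's own proof, which simply reads the corollary off Theorem~\ref{thm:Pog}(i) and (ii) exactly as you do: every primitive ideal equals $\cker(\ind_{L_\chi}^G\alpha)$ for a unique $(\chi,\alpha)\in\mathcal P$, and (ii) then turns weak containment into equality of parameters, hence of kernels. Your care in deriving ``type~$\I$ $\Rightarrow$ CCR'' from the liminal/postliminal characterization rather than from Glimm's theorem is exactly the right move in the absence of second countability.
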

\begin{proof}
This follows directly from Theorem~\ref{thm:Pog}(i) and (ii). 
\end{proof}

\begin{rem}
The  fact that $\Prim(G)$ is a $T_1$-space is pointed out by D.~Poguntke in the first paragraph of Part I in \cite{Pog83}. This property is also known for discrete nilpotent groups (see \cite{How77} for finitely generated groups and \cite{Pog82} for the general case) and for nilpotent locally compact groups containing an open normal subgroup that is compactly generated (see \cite{CaMo86,Lud86}). It is an intriguing open problem to determine whether it holds for an arbitrary nilpotent locally compact group. 
\end{rem}
			
Recall that a unitary representation $\rho$ of $G$ is called a \textbf{factor representation} if the von Neumann algebra generated by $\rho(G)$ is a factor. In that case, the operator $\rho(z)$ is scalar for any element $z$ of the center $Z$ of $G$. Thus $\rho|_Z(z) = \chi_\rho(z) \mathrm{Id}$ for some character $\chi_\rho \in \widehat Z$, called the \textbf{central character} of $\rho$. This definition is consistent with the one given above: indeed every irreducible representation is a factor representation. Combining Theorem~\ref{thm:Pog} with the work by Baggett--Kleppner~\cite{BK73} (valid without any second countability assumption), we  establish the following characterization of the closed points in $\widehat G$. 
 
\begin{thm}\label{thm:closed-points}
Let $G$ be a locally compact group that is two-step nilpotent, and $\pi$ an irreducible unitary representation of $G$ with central character $\chi$. Then the following assertions satisfy the implications (i)~$\Leftarrow$~(ii)~$\Leftrightarrow$~(iii)~$\Leftrightarrow$~(iv). If in addition $G$ is second countable, then they are all equivalent, and the second sentence of (ii) can be discarded.  
\begin{enumerate}[(i)]
\item $[\pi]$ is a closed point in the unitary dual $\widehat G$. 

\item The homomorphism $\omega_\chi \colon G/Z \to \widehat{G/Z}$, defined as in Equation~\ref{eqn:wchi}, has a closed image. Moreover,  the image of every open subset is relatively open.

\item Every factor representation of $G$ with central character $\chi$ is type~$\I$.

\item Every factor representation of $G$  weakly equivalent to $\pi$ is type~$\I$.
\end{enumerate}
	
 \end{thm}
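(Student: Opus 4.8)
The plan is to prove the chain of implications by combining Poguntke's parametrization (Theorem~\ref{thm:Pog}) with the Baggett--Kleppner analysis of projective representations of the abelian quotient $G/Z$. The central object is the map $\omega_\chi \colon G/Z \to \widehat{G/Z}$ attached to a central character $\chi$; by Theorem~\ref{thm:Pog}(iii), the closure of its image equals $L_\chi^\perp$, and an irreducible $\pi$ with central character $\chi$ satisfies $\pi \sim \pi \otimes \psi$ for all $\psi \in L_\chi^\perp$. I would first record that all four conditions depend only on $\chi$ (for (iii) this is immediate, and for (iv) one uses that weak equivalence of factor representations with a fixed central character is governed by the same data via Theorem~\ref{thm:Pog}(i)--(ii)). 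So the statement is really about a single character $\chi \in \widehat Z$.

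For the core equivalence (ii)~$\Leftrightarrow$~(iii), I would invoke Baggett--Kleppner: fixing $\chi$, irreducible (resp.\ factor) representations of $G$ with central character $\chi$ correspond to irreducible (resp.\ factor) $\sigma$-representations of $G/Z$, where $\sigma$ is the $\C^\times$-valued $2$-cocycle on $G/Z$ obtained from $\chi$ and the commutator pairing; the associated antisymmetric bicharacter is precisely $\omega_\chi$ viewed as a pairing $G/Z \times G/Z \to \mathbb{T}$, $(gZ, hZ) \mapsto \chi([g,h])$. Baggett--Kleppner's theorem states that all $\sigma$-factor representations of a locally compact abelian group are type~$\I$ if and only if the radical subgroup of $\sigma$ has the property that the induced pairing on the quotient has closed range together with the open-mapping condition — equivalently (ii). This is where the ``moreover'' clause about images of open sets being relatively open enters: it is exactly the extra hypothesis Baggett--Kleppner need beyond closedness of the image, and it becomes automatic in the second countable case by a standard open mapping / Baire category argument for continuous surjective homomorphisms between Polish groups, which is why the second sentence of (ii) can be discarded there.

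For (iii)~$\Leftrightarrow$~(iv): $\pi$ has central character $\chi$, so by Theorem~\ref{thm:Pog}(ii)--(iii) the factor representations weakly equivalent to $\pi$ are exactly those whose $C^*$-kernel is $\cker(\ind_{L_\chi}^G \alpha)$ for the appropriate $\alpha \in A_\chi$; but by the Baggett--Kleppner picture the type~$\I$-ness of \emph{any} factor representation with central character $\chi$ is controlled solely by $\omega_\chi$ and not by the choice of $\alpha$, so ``every factor representation with central character $\chi$ is type~$\I$'' and ``every factor representation weakly equivalent to $\pi$ is type~$\I$'' are the same condition. Finally (ii)/(iii)/(iv)~$\Rightarrow$~(i) in general: if every factor representation weakly equivalent to $\pi$ is type~$\I$, then $\pi$ is not weakly contained in any genuinely ``larger'' factor representation, and using Corollary~\ref{cor:T1} (the $T_1$ property of $\Prim(G)$, i.e.\ weak containment of irreducibles implies equivalence) together with the fact that the type~$\I$ factor representations weakly equivalent to $\pi$ all generate the same primitive ideal, one deduces that the closure of $\{[\pi]\}$ in $\widehat G$ reduces to $\{[\pi]\}$. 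For the reverse implication (i)~$\Rightarrow$~(ii) in the second countable case I would argue contrapositively: if $\omega_\chi$ fails to have closed image, then $L_\chi^\perp$ strictly contains the image, and one produces a non-type~$\I$ factor representation with central character $\chi$ weakly equivalent to $\pi$ (again via Baggett--Kleppner), whose existence forces $[\pi]$ to be a non-closed point by Glimm's theorem applied relative to the central character.

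The main obstacle I anticipate is pinning down the exact form of the Baggett--Kleppner statement and translating their hypotheses into the precise two-part condition in (ii) — in particular isolating the open-mapping requirement and verifying that second countability makes it redundant — as well as checking carefully that the passage between ``$\sigma$-representations of $G/Z$'' and ``representations of $G$ with central character $\chi$'' is an equivalence of categories that preserves the factor and type~$\I$ properties and intertwines weak containment correctly. The remaining implications are then formal consequences of Theorem~\ref{thm:Pog} and Corollary~\ref{cor:T1}.
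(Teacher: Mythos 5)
Your overall strategy --- Poguntke's parametrization combined with the Baggett--Kleppner correspondence between representations of $G$ with central character $\chi$ and multiplier representations of $G/Z$ --- is exactly the paper's, and your treatment of (ii)~$\Leftrightarrow$~(iii), including the role of the open-mapping clause and its redundancy in the second countable case, matches the intended argument. However, the step (iii)/(iv)~$\Rightarrow$~(i) has a genuine gap. The closure of $\{[\pi]\}$ in $\widehat G$ consists of the classes of irreducible representations weakly contained in $\pi$; Corollary~\ref{cor:T1} tells you these are exactly the irreducibles \emph{weakly} equivalent to $\pi$, but it says nothing about whether two such irreducibles must be \emph{unitarily} equivalent --- that is precisely the difference between $\Prim(G)$ and $\widehat G$, and your gloss of Corollary~\ref{cor:T1} as ``weak containment of irreducibles implies equivalence'' silently conflates the two. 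The observation that all these representations generate the same primitive ideal does not close this gap. The missing ingredient is $C^*$-algebraic: since $\Prim(G)$ is $T_1$, the ideal $\cker(\pi)$ is maximal, so $C^*(\pi)$ is a simple $C^*$-algebra; condition (iii) together with Glimm's theorem (and Sakai's non-separable extension) makes $C^*(\pi)$ type~$\I$; and a simple type~$\I$ $C^*$-algebra is elementary, hence has a \emph{unique} equivalence class of irreducible representations. Only this last fact collapses the weak equivalence class of $\pi$ to a single point of $\widehat G$.

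A second, smaller gap sits in your claim that (iii) and (iv) are ``the same condition.'' A factor representation with central character $\chi$ need not be weakly equivalent to $\pi$: its Poguntke parameter $\alpha \in A_\chi$ may differ from that of $\pi$, so (iv) is a priori weaker than (iii). To obtain (iv)~$\Rightarrow$~(iii) (equivalently, (iv)~$\Rightarrow$~(ii), which is what the paper proves) one must, assuming (ii) fails, manufacture a non-type~$\I$ factor representation that is weakly equivalent to $\pi$ \emph{specifically}. The paper does this by passing to the quotient $G/\ker(\alpha)$ for the parameter $\alpha$ attached to $\pi$ and applying the (ii)~$\Leftrightarrow$~(iii) equivalence there; the resulting factor representation is scalar on $L_\chi$ with the correct character $\alpha$, which forces (via Theorem~\ref{thm:Pog}) every irreducible it weakly contains to be weakly equivalent to $\pi$. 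Your appeal to ``the Baggett--Kleppner picture'' controls the central character but not the parameter $\alpha$, so as written it does not deliver weak equivalence to $\pi$.
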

 
 \begin{proof}
 We may view $G$ as a central extension of the abelian group $G/Z$ associated with a $Z$-valued $2$-cocycle $\kappa \colon G/Z \times G/Z \to Z$. The composite map $\omega = \chi \circ \kappa$ defines a $\mathbf C$-valued cocycle, and there is a canonical one-to-one correspondence between the unitary representations of $G$ with central character~$\chi$, and the so-called $\omega$-representations of $G/Z$ considered by Baggett--Kleppner~\cite{BK73} (see the first few lines on p.~302 in \cite{BK73}). 
 
The image of $\omega_\chi$ is contained in $L_\chi^\perp$, which is a closed subset of $Z^\perp$. By the Corollary to Theorem~3.2 in \cite{BK73}, we infer that (ii) and (iii) are equivalent (see also the discussion preceding Theorem~3.3 in \cite{BK73}, showing that the second sentence of (ii) can be discarded if $G$ is second countable). 

Since the operation of restricting representations preserves weak containment, every factor representation weakly contained in $\pi$ must have the same central character as $\pi$. Thus it is clear that (iii) implies (iv). 

Let us now show that  (iii) implies (i). Let $\pi$ be an irreducible unitary representation of $G$. Since $\Prim(G)$ is $T_1$ by Corollary~\ref{cor:T1}, every primitive ideal in the $C^*$-algebra of $G$ is maximal, hence the $C^*$-algebra generated by $\pi$, denoted by $C^*(\pi)$, is simple. Every factor representation of $C^*(\pi)$ naturally defines a factor representation of $G$ that is weakly contained in $\pi$. Considering the restriction of such a factor representation to $Z$, we infer that its central character is $\chi$ since the operation of restriction preserves weak containment. If (iii) holds this factor representation is type~$\I$. From Glimm's theorem and its extension to the non-separable case by Sakai (see \cite[Theorem 9.1]{Dix77} and the discussion in \cite[IV.1.5.8]{Bla06}), we know that a $C^*$-algebra is type~$\I$ if and only if each of its factor representations is type~$\I$. We deduce that the  $C^*$-algebra $C^*(\pi)$ is type~$\I$. Since $C^*(\pi)$ is also simple, it must be \textit{elementary}, i.e. it is isomorphic to the $C^*$-algebra of compact operators on a Hilbert space. It follows that $C^*(\pi)$ has a unique equivalence class of non-zero irreducible representations (see \cite[Corollary~4.1.5]{Dix77}). In other words, every irreducible unitary representation of $G$ weakly contained in $\pi$ is equivalent to $\pi$. This proves that (iii) implies (i). 

Let us now show that (iv) implies (iii).  We have already seen that (ii) and (iii) are equivalent. Assume that they fail. By Theorem~\ref{thm:Pog}(i), there exists  $\alpha \in A_\chi$  such that $\pi \sim \ind_{L_\chi}^G  \alpha $ (since $\pi(\ell)$ is a scalar operator for each $\ell \in L_\chi$, this just means that $\alpha \in \widehat{L_\chi}$ is the associated character). We have $\ker(\pi ) = \ker(\alpha) \leq L_\chi$, hence $\ker(\alpha)$ is normal in $G$ and the representation $\pi$ factors through $G/\ker(\alpha)$. The center of $G/\ker(\alpha)$ is $L_\chi/\ker(\alpha)$. Recall that $\omega_\chi$ may be viewed as a homomorphism of $G/L_\chi$ to its dual. Using the equivalence between  (ii) and (iii) for the quotient group $G/\ker(\alpha)$, we deduce  that some factor representation $\rho$ of $G/\ker(\alpha)$ with central character $\alpha$ is not type~$\I$. By precomposing $\rho$ with the canonical projection $G \to G/\ker(\alpha)$, we may view $\rho$ as a factor representation of $G$. By definition, for each $\ell \in L_\chi$ we have $\rho(\ell) = \alpha(\ell) \mathrm{Id}$. This implies that each irreducible representation $\sigma \preceq \rho$ has the same Ponguntke parameters as $\pi$. Therefore, we have  $\sigma \sim \pi$ by Theorem~\ref{thm:Pog}. 
It follows that the factor representation $\rho$ is weakly equivalent to $\pi$. Since $\rho$  is not type~$\I$, we deduce that  (iv) indeed fails. 

It remains to show that (i) implies (ii) under the additional hypothesis that $G$ is second countable. If (iv) fails, then $G$ has a  factor representation $\rho$ weakly equivalent to $\pi$, that is not type~$\I$. 
Invoking \cite[Corollary~7.F.4]{BH20}, it follows that $G$ has uncountably many inequivalent irreducible representations weakly equivalent to $\rho$. Each of them is weakly equivalent to $\pi$, so that   the closure $\overline{\big\{[\pi]\big\}}$ contains uncountably many points. Thus (i) fails. 
 \end{proof}
	
\begin{rem}
In order to prove the implication (i)~$\Rightarrow$~(ii) in Theorem~\ref{thm:closed-points} without the second countability assumption, we would need to know that if the simple $C^*$-algebra $C^*(\pi)$ has a unique equivalence class of irreducible representations, then it is type~$\I$ (hence elementary). This is not true for non-separable simple $C^*$-algebras in general (see the counterexample to Naimark's problem in \cite{AW04}). We do not know whether this property holds for the family of simple $C^*$-algebras $C^*(\pi)$ arising  in Theorem~\ref{thm:closed-points}. 
\end{rem}
 
The following remark will be used frequently in the sequel. 

\begin{rem}\label{rem:N}
Let  $G$ be a   two-step nilpotent  locally compact group with center $Z$, and let $N$ be a closed normal subgroup with $[G, G] \leq N \leq Z$. Each character $\chi \in \widehat Z$ may be viewed as a character of $N$ by restriction. Moreover for each $\chi \in \widehat Z$, the homomorphism $\omega_\chi$ is trivial on $Z$-cosets, hence also on $N$-cosets. Thus we may naturally view $\omega_\chi$ as a map from $G/N \to \widehat{G/N}$. Conversely, given a character $\psi \in \widehat N$, the assignment (\ref{eqn:wchi}) defines a homomorphism $\omega_\psi \colon G/N \to \widehat{G/N}$. We can extend $\psi$ continuously to a character $\chi \in \widehat Z$ (see \cite[Chap.~II, \S 1, no. 7, th.~4]{Bou67}), and obtain thereby a homomorphism $\omega_\chi \colon G/Z \to \widehat{G/Z}$. Since  $\widehat{G/Z} \cong Z^\perp \subseteq N^\perp \cong \widehat{G/N}$, we may view $\widehat{G/Z}$ as a closed subspace of $\widehat{G/N}$. In this way, we see that the image of $\omega_\psi$ is closed if and only if the image of $\omega_\chi$ is closed. 
\end{rem}

The following characterization of two-step nilpotent groups of type~$\I$ follows easily. 

\begin{cor}\label{cor:type-I-char}
Let  $G$ be a   two-step nilpotent  locally compact group with center $Z$, and let $N$ be a closed normal subgroup with $[G, G] \leq N \leq Z$. The following conditions are equivalent. 
\begin{enumerate}[(i)]
\item $G$ is type~$\I$.
\item $G$ is CCR. 
\item For each $\chi \in \widehat Z$, the homomorphism $\omega_\chi \colon G/Z \to \widehat{G/Z}$ has closed image. Moreover,  the image of every open subset is relatively open.
\item For each $\psi \in \widehat N$, the homomorphism $\omega_\psi \colon G/N \to \widehat{G/N}$ has closed image. Moreover,  the image of every open subset is relatively open.
\end{enumerate}
If in addition $G$ is second countable, then this is also equivalent to:
\begin{enumerate}[(i)] \setcounter{enumi}{4} 
\item For each $\chi \in \widehat Z$, the homomorphism $\omega_\chi \colon G/Z \to \widehat{G/Z}$ has closed image. 

\item For each $\psi \in \widehat N$, the homomorphism $\omega_\psi\colon G/N \to \widehat{G/N}$ has closed image. 
\end{enumerate}
\end{cor}
\begin{proof}
The equivalence between (i) and (ii) was recorded in Corollary~\ref{cor:T1}. The equivalence between (i), (iii) and (v) follows from the equivalence between (ii) and (iii) in Theorem~\ref{thm:closed-points} (since a group is type~$\I$ if and only if each of its factor representations is type~$\I$, see \cite[Theorem~9.1]{Dix77} and the discussion in \cite[IV.1.5.8]{Bla06}). The equivalence with (iv) and (vi) follows, in view of Remark~\ref{rem:N}.
\end{proof}

Corollary~\ref{cor:type-I-char} provides a characterization of the type~$\I$ condition in terms of the topological/algebraic structure of $G$. Various applications will be presented in Section~\ref{sec:app-type-I} below. Let us already record the following. 

\begin{cor}\label{cor:type-I-quot}
Let $G$ be a two-step nilpotent  locally compact group, and $N$ be a closed subgroup contained in the center of $G$. Assume that $N$ splits as the direct product of two closed subgroups $N_1$ and $N_2$ such that $ [G, G]  \subseteq N_2$. Then $G$ is type~$\I$ if and only if $G/N_1$ is type~$\I$. 
\end{cor}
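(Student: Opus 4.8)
The plan is to apply the criterion of Corollary~\ref{cor:type-I-char}(iii) to both $G$ and $G/N_1$, and to match their respective $\omega_\chi$-homomorphisms. First I would record the elementary algebraic facts: since $N_1 \subseteq Z(G)$ and $N_1 \cap N_2 = \{e\}$ with $[G,G] \subseteq N_2$, the subgroup $N_1$ is a closed central subgroup that meets $[G,G]$ trivially, so $[G,G]$ maps isomorphically into $G/N_1$, the quotient $\bar G := G/N_1$ is again two-step nilpotent, and its center is $\bar Z := Z/N_1$ (one checks $Z(G/N_1) = Z/N_1$ using that $[G,G]\cap N_1 = \{e\}$, so no new central elements are created). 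Hence $\bar G/\bar Z \cong G/Z$ canonically, and $\widehat{\bar G/\bar Z} \cong \widehat{G/Z}$ under the same identification.

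Next I would compare the dual groups of centers. Pontryagin duality applied to the short exact sequence $1 \to N_1 \to Z \to \bar Z \to 1$ of locally compact abelian groups gives $1 \to \widehat{\bar Z} \to \widehat Z \to \widehat{N_1} \to 1$, so $\widehat{\bar Z}$ is precisely the set of characters $\chi \in \widehat Z$ that are trivial on $N_1$. The key observation is that the homomorphism $\omega_\chi \colon G/Z \to \widehat{G/Z}$, $gZ \mapsto (xZ \mapsto \chi([g,x]))$, depends only on the restriction of $\chi$ to $[G,G]$, because $[g,x]$ always lies in $[G,G]$. Since $[G,G]\subseteq N_2$ and $N_1 \cap N_2 = \{e\}$, the restriction map $\widehat Z \to \widehat{[G,G]}$ factors through $\widehat{\bar Z} = N_1^\perp$ surjectively: every $\chi \in \widehat Z$ has the same restriction to $[G,G]$ as some $\chi' \in \widehat{\bar Z}$ (concretely, modify $\chi$ on the $N_1$-direction to kill it, which does not affect the $N_2 \supseteq [G,G]$ direction). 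Consequently the family of homomorphisms $\{\omega_\chi : \chi \in \widehat Z\}$ on $G/Z$ coincides, as an unordered family of maps, with the family $\{\omega_{\chi'} : \chi' \in \widehat{\bar Z}\}$ arising from $\bar G$ (under the canonical identifications $G/Z \cong \bar G/\bar Z$ and $\widehat{G/Z} \cong \widehat{\bar G/\bar Z}$).

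Granting this matching, the two conditions ``for each $\chi \in \widehat Z$, $\omega_\chi$ has closed image and sends open sets to relatively open sets'' for $G$ and for $\bar G = G/N_1$ are literally the same condition, and Corollary~\ref{cor:type-I-char} then yields that $G$ is type~$\I$ if and only if $G/N_1$ is type~$\I$. The main obstacle I anticipate is purely bookkeeping: verifying carefully that $Z(G/N_1) = Z/N_1$ (rather than something larger) and that the $2$-cocycle / commutator-pairing descriptions really do transport correctly along the quotient, i.e.\ that an element of $G/N_1$ is central iff its preimage in $G$ is. This uses in an essential way that $N_1$ is complemented inside $N$ by a subgroup $N_2$ containing $[G,G]$; without this hypothesis the quotient could have a strictly larger center and the $\omega_\chi$ families would no longer match. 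The topological points — that the identifications $\widehat Z \cong \widehat{\bar Z} \times \widehat{N_1}$ and $G/Z \cong \bar G/\bar Z$ are isomorphisms of topological groups, so that ``closed image'' and ``open-to-relatively-open'' are preserved — are routine consequences of Pontryagin duality for the direct-product decomposition $N = N_1 \times N_2$ and the open mapping theorem for locally compact second countable (or $\sigma$-compact) groups, and I would treat them briefly.
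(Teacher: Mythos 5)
Your proposal is correct and follows essentially the same route as the paper: both rest on the observation that $\omega_\chi$ depends only on $\chi|_{[G,G]}\subseteq \chi|_{N_2}$, so that the splitting $N=N_1\times N_2$ together with character extension lets one replace any $\chi\in\widehat Z$ by a character trivial on $N_1$ without changing $\omega_\chi$, after which Corollary~\ref{cor:type-I-char} applies to both $G$ and $G/N_1$. The only (harmless) variations are that you verify $Z(G/N_1)=Z/N_1$ explicitly and deduce both implications from the matching of the two families of maps, whereas the paper gets the forward implication for free from stability of type~$\I$ under quotients.
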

\begin{proof}
By definition, the type~$\I$ property passes to quotient groups, so $G/N_1$ is type~$\I$ as soon as $G$ is. 

Let $Z$ be the center of $G$ and observe that $[G, G] \leq N \leq Z$ and that  $N \cong N_1 \times N_2$  by hypothesis.  Thus $\widehat N \cong \widehat{N_1} \times \widehat{N_2}$. Then, for each $\chi \in \widehat Z$, there exists $\psi \in \widehat{N}$ such that $\psi|_{N_1}$ is trivial and $\psi|_{N_2} = \chi|_{N_2}$. By \cite[Chap.~II, \S 1, no. 7, th.~4]{Bou67}, there exists a character $\chi' \in \widehat{Z}$ extending $\psi$. It then follows from the definition of $\chi'$ that $\omega_{\chi}$ and $\omega_{\chi'}$ have the same image in $\widehat{G/N}$. Assuming that $G/N_1$ is type~$\I$, Corollary~\ref{cor:type-I-char} implies that the image of $\omega_{\chi'}$ in $\widehat{(G/N_1)/(N/N_1)} \cong \widehat{G/N}$ is closed. The result follows. 
\end{proof}
		
\subsection{Totally disconnected groups}\label{sec:tdlc}

We shall now focus on totally disconnected locally compact groups (whose name will henceforth be abbreviated by \textbf{tdlc groups}). In this case, additional algebraic characterizations may be added to Theorem~\ref{thm:closed-points} and Corollary~\ref{cor:type-I-char}. 

Recall that every continuous homomorphism of a tdlc group to a Lie group has an open kernel, since Lie groups have no small subgroups. In particular, every character of a tdlc group has an open kernel. It follows that if $G$ is a  two-step nilpotent tdlc group with center $Z$ and $\chi \in \widehat Z$, then there exists a compact open subgroup $U \leq G$ with $U \cap Z \leq \ker(\chi)$. We infer that the derived group $[U, U]$ is contained in $\ker(\chi)$. 

\begin{prop}\label{prop:tdlc-step2-nilp}
Let $G$ be a second countable two-step nilpotent tdlc group with center $Z$, and let $\chi \in \widehat Z$. Let  $\omega_\chi \colon G/Z \to \widehat{G/Z}$ be the homomorphism defined in Equation~\ref{eqn:wchi}. Let also $U \leq G$ be a compact open subgroup such that $[U, U] \leq \ker(\chi)$. Define 
$$O_{\chi, U} = \big\{g \in G \mid [g, U] \subseteq \ker(\chi)\big\} $$
and 
$$L_{\chi, U} = \big\{g \in O_{\chi, U} \mid [g, O_{\chi, U}]  \subseteq \ker(\chi)\big\}.$$
Let $\tilde\chi$  be a character of $L_{\chi, U}$ that extends $\chi$  (the quotient $L_{\chi, U}/\ker(\chi)$ is an abelian group that contains $Z/\ker(\chi)$ as a closed subgroup, hence such a character $\tilde\chi$ always exists by \cite[Chap.~II, \S 1, no. 7, th.~4]{Bou67}). 

The following assertions are equivalent. 
\begin{enumerate}[(i)]
\item The image of $\omega_\chi \colon G/Z \to \widehat{G/Z}$ is closed. 

\item The quotient $O_{\chi, U}/L_{\chi, U}$ is finite. 

\item The quotient $O_{\chi, U}/\ker(\chi)$ is \textbf{center-by-finite}, i.e.\ its center is of finite index. 

\item The quotient $O_{\chi, U}/\ker(\chi)$ is a type~$\I$ group. 

\item The quotient $O_{\chi, U}/\ker(\tilde\chi)$ is a type~$\I$ group. 

\item The quotient $O_{\chi, U}/\ker(\tilde\chi)$ is center-by-finite.
\end{enumerate}

\end{prop}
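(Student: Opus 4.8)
The plan is to establish the six conditions are equivalent through the implications (i) $\Leftrightarrow$ (ii), (ii) $\Leftrightarrow$ (iii), (ii) $\Leftrightarrow$ (vi), (iii) $\Rightarrow$ (iv), (iv) $\Rightarrow$ (v), (v) $\Rightarrow$ (vi), the backbone being that modding out by $\ker(\chi)$ (resp.\ $\ker(\tilde\chi)$) turns $L_{\chi,U}$ into the centre. First I would record the elementary structural facts: using two-step nilpotency and $[U,U]\subseteq\ker(\chi)$ one gets $U\subseteq L_{\chi,U}\subseteq O_{\chi,U}$, that $O_{\chi,U}$ and $L_{\chi,U}$ are open in $G$, that $L_\chi=\ker(\omega_\chi)\subseteq O_{\chi,U}$ and $L_\chi\trianglelefteq G$, and that $\ker(\chi)$ and $\ker(\tilde\chi)$ are normal in $O_{\chi,U}$; moreover, since a character of a tdlc group has open kernel, $\ker(\tilde\chi)$ is open in $L_{\chi,U}$, hence in $O_{\chi,U}$, so $H':=O_{\chi,U}/\ker(\tilde\chi)$ is a countable \emph{discrete} group, whereas the centre of $H:=O_{\chi,U}/\ker(\chi)$ contains the open subgroup $U\ker(\chi)/\ker(\chi)$. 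A short commutator computation — using $[O_{\chi,U},O_{\chi,U}]\subseteq Z$ and $\ker(\tilde\chi)\cap Z=\ker(\chi)$ — shows $Z(H)=L_{\chi,U}/\ker(\chi)$ and $Z(H')=L_{\chi,U}/\ker(\tilde\chi)$, so that $H/Z(H)\cong H'/Z(H')\cong O_{\chi,U}/L_{\chi,U}$; this yields (ii) $\Leftrightarrow$ (iii) and (ii) $\Leftrightarrow$ (vi) immediately. When (iii) (resp.\ (vi)) holds, $H$ (resp.\ $H'$) is a finite — hence compact — extension of the abelian, and therefore type~$\I$, group $Z(H)$ (resp.\ $Z(H')$), so it is type~$\I$ by the fact recalled in the introduction: this is (iii) $\Rightarrow$ (iv) and (vi) $\Rightarrow$ (v). Finally (iv) $\Rightarrow$ (v) is trivial since $H'$ is a quotient of $H$ and type~$\I$ passes to quotients.

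For (i) $\Leftrightarrow$ (ii) I would argue by Pontryagin duality. The subgroup $P:=UZ/Z$ is compact and open in $G/Z$, so its annihilator $P^\perp$ is compact and open in $\widehat{G/Z}$, and $\omega_\chi^{-1}(P^\perp)=O_{\chi,U}/Z$. As a subgroup of a topological group is closed once its intersection with a fixed open subgroup is closed there, $\mathrm{im}(\omega_\chi)$ is closed iff $\omega_\chi(O_{\chi,U})=\mathrm{im}(\omega_\chi)\cap P^\perp$ is closed in the compact group $P^\perp$, i.e.\ compact; and $\omega_\chi(O_{\chi,U})$, being an extension of a discrete group $D$ by the compact group $\omega_\chi(P)$, is compact iff $D$ is finite. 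The crucial point is the identification $D\cong O_{\chi,U}/L_{\chi,U}$. Since $L_\chi\subseteq O_{\chi,U}$ one has $\omega_\chi(O_{\chi,U})\cong O_{\chi,U}/L_\chi$ and $\omega_\chi(P)\cong (UZ)L_\chi/L_\chi$, hence $D\cong O_{\chi,U}/(UZ)L_\chi$, and I claim $(UZ)L_\chi=L_{\chi,U}$. The inclusion $\subseteq$ is clear; for $\supseteq$, observe that $\omega_\chi(U)$ is a compact subgroup of $\widehat{G/O_{\chi,U}}$ whose annihilator in $G/O_{\chi,U}$ is trivial, because $\{g:[g,U]\subseteq\ker(\chi)\}=O_{\chi,U}$; thus $\omega_\chi(U)$ is dense, hence equal to $\widehat{G/O_{\chi,U}}$. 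Since $\omega_\chi(L_{\chi,U})$ also lies in $\widehat{G/O_{\chi,U}}$, every $\ell\in L_{\chi,U}$ satisfies $\omega_\chi(\ell)=\omega_\chi(u)$ for some $u\in U$, i.e.\ $\ell u^{-1}\in L_\chi$ and $\ell\in U L_\chi=(UZ)L_\chi$. This gives $D\cong O_{\chi,U}/L_{\chi,U}$ and completes (i) $\Leftrightarrow$ (ii).

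The remaining, and hardest, step is (v) $\Rightarrow$ (vi). Since $H'$ is discrete and countable, Thoma's theorem rephrases (v) as: $H'$ is virtually abelian; say $[H':A]=m$ with $A$ abelian (take $A$ to be the normal core). Then $g^m\in A$ for all $g$, so $[g,h]^{m^2}=[g^m,h^m]=1$ and $[H',H']$ has exponent dividing $m^2$; but $[H',H']\subseteq Z(H')=L_{\chi,U}/\ker(\tilde\chi)$ embeds into $\mathbf T$ via the \emph{injective} character $\overline{\tilde\chi}$ induced by $\tilde\chi$, and a bounded-exponent subgroup of $\mathbf T$ is finite cyclic, so $[H',H']\cong\ZZ/e$ for some $e$. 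The commutator map now gives a non-degenerate alternating pairing $\beta\colon Q\times Q\to\ZZ/e$ on $Q:=H'/Z(H')$. Setting $C:=C_{H'}(A)$, one has $C\trianglelefteq H'$ of finite index with $A\subseteq Z(C)$, so $R:=Z(C)/Z(H')$ has finite index in $Q$ and is \emph{isotropic} for $\beta$ (as $Z(C)$ is abelian). I then invoke the elementary lemma: a non-degenerate alternating pairing $Q\times Q\to\ZZ/e$ possessing a finite-index isotropic subgroup $R$ forces $Q$ to be finite — indeed $r\mapsto\beta(r,\cdot)$ kills $R$ by isotropy, hence embeds $R$ into the finite group $\Hom(Q/R,\ZZ/e)$ by non-degeneracy, so $R$, and therefore $Q$, is finite. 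Thus $[H':Z(H')]<\infty$, which is (vi). I expect this last implication to be the main obstacle: one must extract from type~$\I$-ness not only virtual abelianness (via Thoma) but also the fact that $[H',H']$ is \emph{finite cyclic} rather than merely of bounded exponent, and it is precisely here that the injectivity of $\overline{\tilde\chi}$ — the reason the auxiliary character $\tilde\chi$ appears in the statement — is used.
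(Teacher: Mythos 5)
Your proof is correct, and it departs from the paper's argument in two substantive ways. For (i)~$\Leftrightarrow$~(ii), the paper makes the same reduction to $O_{\chi,U}$ via local closedness, but then invokes Poguntke's parametrization (Theorem~\ref{thm:Pog}(iii)) to see that $\omega_{\tilde\chi}$ has dense image in $\widehat{O_{\chi, U}/ L_{\chi, U}}$; you instead prove the needed density directly, by showing that $\omega_\chi(U)$ has trivial annihilator in the discrete group $G/O_{\chi,U}$ and is therefore (being compact) all of its dual, which yields the identity $UL_\chi=L_{\chi,U}$ --- a self-contained Pontryagin-duality substitute for the Poguntke input. One small imprecision: $D=\omega_\chi(O_{\chi,U})/\omega_\chi(UZ)$ is not obviously discrete, since the annihilator of the open but generally non-compact subgroup $O_{\chi,U}/Z$ need not be open in $\widehat{G/Z}$; the equivalence ``compact iff $D$ finite'' nevertheless holds because $D$ is countable and a countable compact Hausdorff group is finite (Baire), which is the same device underlying the paper's ``countable dense subgroup of a compact group'' step. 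More interestingly, your implication (v)~$\Rightarrow$~(vi) is genuinely different: the paper feeds the discrete group $O_{\chi,U}/\ker(\tilde\chi)$ back into Corollary~\ref{cor:type-I-char} to recover closedness of the image of $\omega_{\tilde\chi}$ and hence (iii), whereas you combine Thoma's theorem with the injectivity of the character induced by $\tilde\chi$ on the centre (forcing $[H',H']$ to be finite cyclic rather than merely of bounded exponent) and an elementary lemma on non-degenerate alternating pairings admitting a finite-index isotropic subgroup. This gives a purely group-theoretic proof of the hardest implication, at the price of importing Thoma's theorem, which the paper only needs later for Corollary~\ref{cor:G-discrete}; your pairing lemma is correct and the cycle of implications closes.
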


\begin{proof}
By definition, the quotient group $L_{\chi, U}/ \ker(\chi)$ is the center of  $O_{\chi, U}/\ker(\chi)$. The equivalence between (ii) and (iii) readily follows. 

Recall that a subgroup of a locally compact group is closed if and only if it is locally closed (see \cite[Ch.~III, \S~2., Prop.~4]{BouTG}). In the dual $\widehat{G/Z}$, an identity neighbourhood is provided by all the characters that vanish on $UZ/Z$. Given $g \in G$, the character $\omega_\chi(gZ)$ vanishes on $UZ/Z$ if and only if $[g, U] \leq \ker(\chi)$, which means  that $g  \in  O_{\chi, U}$. This proves that   $\omega_\chi$ has a closed image if and only if the restriction of $\omega_\chi$ to $O_{\chi, U}$ has a closed image. 

Now we apply Theorem~\ref{thm:Pog}(iii) to the group $O_{\chi, U}/\ker(\chi)$ and the character $\tilde\chi$ defined on its center. By definition, we have 
$$L_{\tilde\chi} = \{g \in O_{\chi, U}/\ker(\chi) \mid [g,  O_{\chi, U}] \subseteq \ker(\tilde\chi)\}.$$
Since $[G, G] \leq Z$, we infer that $L_{\tilde\chi} = L_{\chi, U}/\ker(\chi)$ and that $\omega_{\tilde\chi}$ is nothing but the restriction of $\omega_\chi$ to $O_{\chi, U}$ (which is indeed trivial on $\ker(\chi)$). The map $\omega_{\tilde\chi}$  may be viewed as a map
$$\omega_{\tilde\chi} \colon O_{\chi, U}/ L_{\chi, U} \to \widehat{O_{\chi, U}/ L_{\chi, U}},$$
whose image is dense by Theorem~\ref{thm:Pog}(iii). 

Since $L_{\chi, U} $ contains $U$,  it is an open subgroup of $O_{\chi, U}$. The image of $\omega_\chi $ is thus a countable dense subgroup of the compact group $\widehat{O_{\chi, U}/ L_{\chi, U}}$. We infer that $\omega_{\tilde\chi}$ has a closed image if and only if the countable discrete group $O_{\chi, U}/ L_{\chi, U} $ is compact. The equivalence between (i) and (ii) follows. 

It is clear that (iii) implies (iv). Since the type~$\I$ conditions passes to quotient groups, and since $\ker(\tilde\chi) $ contains $\ker(\chi)$, the assertion (iv) implies (v). 

Finally, we observe that  $L_{\chi, U}/\ker(\tilde\chi)$ is the center of $O_{\chi, U}/\ker(\tilde\chi)$. Hence (iii) and (vi) are equivalent. If (v) holds, then we invoke Corollary~\ref{cor:type-I-char} for the group $O_{\chi, U}/\ker(\tilde\chi)$ and the central character $\tilde\chi$ (viewed as a character defined on the quotient $L_{\chi, U}/\ker(\tilde\chi)$). We infer that the image of the homomorphism $\omega_{\tilde\chi}$ is closed. As seen above, this implies  that $O_{\chi, U}/ L_{\chi, U}$ is finite. Thus  (v) implies (ii).  
\end{proof}

\begin{rem}\label{rem:disquo}
The subgroup $O_{\chi, U}$ is open in $G$ since it contains $U$, and $O_{\chi, U}/\ker(\chi)$ has an open center, namely $L_{\chi, U}/\ker(\chi)$. Hence the kernel $\ker(\tilde\chi)$ is also open and so the quotient  $O_{\chi, U}/\ker(\tilde\chi)$ is always discrete.
\end{rem}
		
\begin{rem}
Proposition~\ref{prop:tdlc-step2-nilp} may be viewed as a manifestation of Mackey's little group method: the unitary representation theory of the larger group $G$ is controlled by that of the subgroups of the form $O_{\chi, U}$, which play the role of the little groups. 

One may actually describe much more precisely how Mackey's little group method applies to the situation treated by Proposition~\ref{prop:tdlc-step2-nilp}. Indeed, for each such $\chi \in \widehat{Z}$ and corresponding $U$, the group $UZ/\ker(\chi)$ is an abelian closed normal subgroup of  $G/\ker(\chi)$, and one may verify that it is regularly embedded. Moreover, given a character $\psi$ of $UZ/\ker(\chi)$ extending $\chi$ on $Z/\ker(\chi)$, it is easy to see that its stabilizer $G_\psi$ (for the natural $G$-action on the dual of $UZ/\ker(\chi)$) coincides with  $O_{\chi, U}$. In view of Theorem~\ref{thm:closed-points}, one can establish Proposition~\ref{prop:tdlc-step2-nilp} by invoking Mackey's theorem \cite[Theorem~3.11]{Mac76}. 
\end{rem}
	
\begin{rem}
The items (ii) and (iii) in Proposition~\ref{prop:tdlc-step2-nilp} depend on the choice of a compact open subgroup $U$ with $[U, U]\leq \ker(\chi)$, while the item (i) does not. Hence the validity (ii) and (iii) for one specific subgroup $U$ implies it for all.  

Fix a descending chain $U = U_0 \geq U_1 \geq \dots$ of compact open subgroup that form a basis of identity neighbourhoods in $G$. For each $n \geq 0$, set $O_n = O_{\chi, U_n}$. The groups $(O_n)$ form an ascending chain of open subgroups. Given $g \in G$, the character $\omega_\chi(g)$ is continuous, hence it has an open kernel. It follows that $\ker(\omega_\chi(g))$ contains $U_n$ for some $n$. Hence $g \in O_n$. Therefore we have $G = \bigcup_n O_n$. We have just seen that $O_0/\ker(\chi)$ is center-by-finite if and only if $O_n/\ker(\chi)$ is center-by-finite for all $n$. Thus (iii) implies that $G/\ker(\chi)$ is  a countable ascending union of open subgroups that are center-by-finite. 
\end{rem}
	
\begin{rem}\label{rmk:G-discrete}
In the case where $G$ is discrete, one may take $U =\{e\}$. In that case $O_{\chi, U} = G$ and $L_{\chi, U} = L_\chi$. Thus, if $G$ is discrete and countable, then the assertions of Theorem~\ref{thm:closed-points} hold if and only if $G/L_\chi$ is finite, if and only if $G/\ker(\chi)$ is center-by-finite. 
\end{rem}
	
Combining Proposition~\ref{prop:tdlc-step2-nilp} with the results of the previous section, we obtain the following. 

\begin{cor}\label{cor:G-discrete}
Let $G$ be a  countable two-step nilpotent group with center $Z$. Then the following conditions are equivalent. 
\begin{enumerate}[(i)]
\item $G$ is type~$\I$. 
\item $G$ is virtually abelian. 
\item For each character $\chi \in \widehat Z$, the quotient $G/\ker(\chi)$ is center-by-finite. 
\end{enumerate}
\end{cor}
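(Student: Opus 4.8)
The plan is to derive all three equivalences from material already in place, so that no substantially new argument is required. The equivalence (i)~$\Leftrightarrow$~(ii) is precisely Thoma's theorem for discrete groups, recalled in the introduction (see \cite{Tho68} or \cite[Theorem~7.D.1]{BH20}), which I would simply cite. The content therefore lies in the equivalence (i)~$\Leftrightarrow$~(iii), which I would obtain by specializing the results of the previous subsection to the trivial compact open subgroup $U = \{e\}$.

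In detail: a countable group $G$, regarded as discrete, is a second countable two-step nilpotent tdlc group, so Corollary~\ref{cor:type-I-char} and Proposition~\ref{prop:tdlc-step2-nilp} both apply. Fix $\chi \in \widehat Z$. The choice $U = \{e\}$ is admissible because $[U, U] = \{e\} \subseteq \ker(\chi)$, and with this choice one has $O_{\chi, U} = G$ and $L_{\chi, U} = L_\chi$, as recorded in Remark~\ref{rmk:G-discrete}. Since $G$ is second countable, clause~(iv) of Corollary~\ref{cor:type-I-char} yields that $G$ is type~$\I$ if and only if, for every $\chi \in \widehat Z$, the homomorphism $\omega_\chi \colon G/Z \to \widehat{G/Z}$ has closed image. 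For a fixed $\chi$, the equivalence of items~(i) and~(iii) in Proposition~\ref{prop:tdlc-step2-nilp} says that $\omega_\chi$ has closed image if and only if $O_{\chi, U}/\ker(\chi) = G/\ker(\chi)$ is center-by-finite. Chaining these equivalences and quantifying over $\chi$ gives (i)~$\Leftrightarrow$~(iii).

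I do not expect a genuine obstacle here: the corollary is essentially a repackaging of Corollary~\ref{cor:type-I-char} and Proposition~\ref{prop:tdlc-step2-nilp} in the discrete setting, with Thoma's theorem supplying the algebraic formulation~(ii). The only points requiring attention are hypothesis checks — that countability of $G$ supplies the second countability used in clause~(iv) of Corollary~\ref{cor:type-I-char} (so that the openness condition on images can be dropped), and that the degenerate subgroup $U = \{e\}$ meets the standing requirement $[U, U] \subseteq \ker(\chi)$ of Proposition~\ref{prop:tdlc-step2-nilp}. One could, if desired, also verify the purely algebraic equivalence (ii)~$\Leftrightarrow$~(iii) directly: for (ii)~$\Rightarrow$~(iii), given a finite-index normal abelian subgroup $A \leq G$ and $\chi \in \widehat Z$, two-step nilpotency makes $a \mapsto [a, h]$ a homomorphism from $A$ into $[G,G] \leq Z$ whose image is killed by $[G:A]$, hence has finite image in $G/\ker(\chi)$ for each of the finitely many coset representatives $h$, forcing the center of $G/\ker(\chi)$ to have finite index; the converse direction is less transparent algebraically, which is why the representation-theoretic route above, giving the full chain uniformly, is preferable.
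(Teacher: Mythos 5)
Your proposal is correct and follows essentially the same route as the paper: Thoma's theorem for (i)$\Leftrightarrow$(ii), and the specialization of Corollary~\ref{cor:type-I-char} and Proposition~\ref{prop:tdlc-step2-nilp} to $U=\{e\}$ (exactly the content of Remark~\ref{rmk:G-discrete}) for the equivalence with (iii). The hypothesis checks you flag (countable discrete implies second countable, and $[U,U]=\{e\}\subseteq\ker(\chi)$) are the right ones, and your optional direct algebraic argument for (ii)$\Rightarrow$(iii) is sound but not needed.
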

\begin{proof}
The equivalence between (i) and (ii) is valid for all discrete groups, according to Thoma's theorem \cite[Satz 4]{Tho68}. The equivalence with (iii) is a consequence of Corollary~\ref{cor:type-I-char} and Proposition~\ref{prop:tdlc-step2-nilp} (see Remark~\ref{rmk:G-discrete}). 
\end{proof}

\begin{rem}
A finitely generated nilpotent group is virtually abelian if and only if it is center-by-finite. This can be deduced for example from Bass--Guivrac'h's theorem describing the asymptotic growth type of a finitely generated nilpotent group (see \cite[Theorem~14.26]{DK18}). Thus the equivalence between (ii) and (iii) is obvious for finitely generated groups. However, a virtually abelian two-step nilpotent group that is not finitely generated may fail to be center-by-finite. As an example, let $k$ be a field of prime order $p$. Consider the action of the additive group $k$ on the $2$-dimensional vector space over $k$, denoted by $V$, where $x \in k$ acts as the matrix $\left(\begin{array}{cc} 1 & x \\ 0 & 1\end{array}\right)$. Let $A$ be the countable abelian group defined as the countably infinite direct sum of copies of $V$, and let $G = A \rtimes k$ be the semi-direct product, where $k$ acts on each copy of $V$ through the action defined above. Then $G$ is virtually abelian and two-step nilpotent, but the center of $G$ is of infinite index. 
\end{rem}
			
We mention a purely algebraic consequence of Corollary~\ref{cor:G-discrete} that we will need later. It can be compared to the fact that an infinite direct sum of non-abelian groups is never virtually abelian. 

\begin{cor}\label{cor:non-v-ab}
Let $G$ be a group and for each integer $n \geq 0$, let $G_n$ be a  non-abelian two-step nilpotent   subgroup of $G$. Suppose that $[G_m, G_n]= \{e\}$ for all $m \neq n$. Set $A_n = [G_n, G_n]$. If the subgroup generated by $\bigcup_n A_n$ is isomorphic to the direct sum $\bigoplus_n A_n$, then $G$ is not virtually abelian. 
\end{cor}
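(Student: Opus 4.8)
The plan is to reduce to a countable subgroup and then invoke Corollary~\ref{cor:G-discrete}. First I would pass to a countable setting: for each $n$ choose $x_n, y_n \in G_n$ with $a_n := [x_n, y_n] \neq e$ (possible since $G_n$ is non-abelian), replace $G_n$ by $\langle x_n, y_n\rangle$ and $G$ by the subgroup $H$ generated by all of these. Then $H$ is generated by a countable set, hence countable; each new $G_n$ is a finitely generated non-abelian two-step nilpotent group; the relations $[G_m, G_n] = \{e\}$ for $m \neq n$ persist; and the groups $A_n := [G_n, G_n]$ (each still containing $a_n \neq e$) continue to generate their direct sum, being subgroups, one per coordinate, of the original $\bigoplus_n A_n$. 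Since a subgroup of a virtually abelian group is virtually abelian, it suffices to prove $H$ is not virtually abelian, so from now on I assume $G = H$ is countable and generated by the $G_n$. Next I would record that $H$ is two-step nilpotent: each $A_n$ is centralized by $G_n$ (two-step nilpotency of $G_n$) and by every $G_m$ with $m \neq n$ (the standing hypothesis), hence by all of $H$, so $N := \langle \bigcup_n A_n\rangle \leq Z(H)$; and $H/N$ is generated by the images of the $G_n$, which are abelian and pairwise commuting, so $H/N$ is abelian, whence $[H, H] = N \cong \bigoplus_n A_n$, central in $H$.

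Now suppose for contradiction that $H$ is virtually abelian. By Corollary~\ref{cor:G-discrete} this means $H$ is type~$\I$, so, by the same corollary, $H/\ker(\chi)$ is center-by-finite for every character $\chi$ of $Z(H)$. The key point is to choose $\chi$ cleverly. Since $a_n$ is a non-trivial element of the discrete abelian group $A_n$, I pick $\chi_n \in \widehat{A_n}$ with $\chi_n(a_n) \neq 1$; via the isomorphism $[H, H] \cong \bigoplus_n A_n$ these assemble into a character $\chi_0$ of $[H, H]$ with $\chi_0(a_n) \neq 1$ for all $n$, which I extend to a character $\chi$ of $Z(H)$ (as $[H, H]$ is a subgroup of the abelian group $Z(H)$; cf.\ \cite[Chap.~II, \S 1, no.~7, th.~4]{Bou67}). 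Write $\bar H = H/\ker(\chi)$ and $\bar g$ for the image of $g$. Since $\chi(a_n) = \chi_0(a_n) \neq 1$ we have $\bar a_n \neq \bar e$; and since $\bar H$ is two-step nilpotent, for $i \neq j$ the standard commutator identities give
$$[\bar x_i \bar x_j^{-1}, \bar y_i] = [\bar x_i, \bar y_i]\,[\bar x_j, \bar y_i]^{-1} = \bar a_i \neq \bar e,$$
because $[x_j, y_i] \in [G_j, G_i] = \{e\}$. Hence $\bar x_i \bar x_j^{-1} \notin Z(\bar H)$ whenever $i \neq j$, so the cosets $\bar x_n Z(\bar H)$, $n \in \NN$, are pairwise distinct and $Z(\bar H)$ has infinite index in $\bar H$ --- contradicting center-by-finiteness. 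Therefore $H$, and hence $G$, is not virtually abelian.

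I expect the main obstacle to be conceptual rather than computational: a virtually abelian two-step nilpotent group need not be center-by-finite (see the Remark following Corollary~\ref{cor:G-discrete}), so one cannot argue directly at the level of $G$ or of $H$. Corollary~\ref{cor:G-discrete} is precisely the device that converts virtual abelianness into center-by-finiteness of all quotients $H/\ker(\chi)$, and the direct-sum hypothesis is exactly what makes it possible to select a single $\chi$ that is non-degenerate on every $a_n$ at once; without it, an individual central character could annihilate all but finitely many of the $a_n$, and the corresponding quotient could well remain center-by-finite.
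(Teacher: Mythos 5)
Your proof is correct and follows essentially the same route as the paper's: reduce to the countable two-step nilpotent subgroup generated by one non-commuting pair from each $G_n$, use the direct-sum hypothesis to choose a central character $\chi$ with $\chi(a_n)\neq 1$ for all $n$, and contradict the center-by-finiteness of $H/\ker(\chi)$ guaranteed by Corollary~\ref{cor:G-discrete}. The only cosmetic difference is at the last step, where the paper runs a pigeonhole argument on cosets of the center while you exhibit the pairwise distinct cosets $\bar x_n Z(\bar H)$ directly via the same commutator computation.
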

\begin{proof}
Let $g_n, h_n \in G_n$ be non-commuting elements. Let $G^1_n = \langle g_n, h_n \rangle$, $A^1_n =  [G^1_n, G^1_n]$ and $G^1 = \langle \bigcup_n G^1_n \rangle$. It suffices to show that $G^1$ is not virtually abelian. Since the $G_n$'s pairwise commute, the multiplication map defines a surjective homomorphism $\bigoplus_n G^1_n \to G^1$. Since $G^1_n$ is two-step nilpotent for all $n$, it follows that $G^1$ is a countable two-step nilpotent group. 

Set $A^1 = \langle \bigcup_n A^1_n \rangle$ and $a_n = [g_n, h_n]$. The hypotheses imply that $A^1$ is abelian and splits as the direct sum of the $A^1_n$'s. Hence its dual is the direct product $\prod_n \widehat{A^1_n}$. We infer that there is a character $\chi$ of $A^1$ such that $\chi(a_n) \neq 1$ for all $n$. We extend $\chi$ to a character defined on the center of $G^1$, that we also denote by $\chi$. We claim that $G^1/\ker(\chi)$ is not center-by-finite. 

Let $Z$ be the center of $G^1/\ker(\chi)$ and suppose for a contradiction that $Z$ has finite index. Let 
$\pi \colon G^1 \to G^1/\ker(\chi)$ be the   canonical projection. 
By the pigeonhole principle, there is a strictly increasing function $\psi \colon \NN \to \NN$ such that the map $n \mapsto \pi(g_{\psi(n)})Z$ is constant. Thus $\pi(g_{\psi(0)}^{-1} g_{\psi(n)}) \in Z$. In particular, for all $n >0$,  the commutator 
$$[\pi(g_{\psi(0)}^{-1} g_{\psi(n)}), \pi(h_{\psi(n)})] = \pi([g_{\psi(0)}^{-1}g_{\psi(n)}, h_{\psi(n)}])$$ 
is trivial. Since $G^1$ is two-step nilpotent, and since $g_{\psi(0)}$ commutes with $h_{\psi(n)}$ for $n>0$, we infer   that $\pi([g_{\psi(n)}, h_{\psi(n)}] )$ is trivial, hence that $[g_{\psi(n)}, h_{\psi(n)}]  = a_{\psi(n)} \in \ker(\chi)$ for all $n >0$. This contradicts the definition of $\chi$. 

In view of Corollary~\ref{cor:G-discrete}, we deduce that $G^1$ is not virtually abelian. The conclusion follows. 
\end{proof}
		
\begin{cor}\label{cor:G-tdlc}
Let $G$ be second countable two-step nilpotent tdlc group. Then $G$ is type~$\I$ if and only if every discrete quotient of every open subgroup of $G$ is type~$\I$. 
\end{cor}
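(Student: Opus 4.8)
The plan is to assemble the statement directly from Corollary~\ref{cor:type-I-char} and Proposition~\ref{prop:tdlc-step2-nilp}; no new ideas are needed. For the ``only if'' direction I would simply invoke the stability properties of the type~$\I$ property: it passes to open subgroups by \cite[Proposition~6.E.21]{BH20} and to quotients, so if $G$ is type~$\I$ then every discrete quotient of every open subgroup of $G$ is type~$\I$.

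For the ``if'' direction, assume that every discrete quotient of every open subgroup of $G$ is type~$\I$. Since $G$ is second countable, Corollary~\ref{cor:type-I-char}(iv) reduces the task to proving that, for every $\chi \in \widehat Z$, the homomorphism $\omega_\chi \colon G/Z \to \widehat{G/Z}$ has closed image. I would fix such a $\chi$, choose a compact open subgroup $U \le G$ with $[U, U] \le \ker(\chi)$ (as recalled before Proposition~\ref{prop:tdlc-step2-nilp}), and form the subgroups $O_{\chi, U}$ and $L_{\chi, U}$ together with a character $\tilde\chi$ of $L_{\chi, U}$ extending $\chi$. By the equivalence of conditions (i) and (v) in Proposition~\ref{prop:tdlc-step2-nilp}, it then suffices to show that $O_{\chi, U}/\ker(\tilde\chi)$ is type~$\I$.

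This is where the hypothesis is used. First, $O_{\chi, U}$ is open in $G$ because it contains $U$. Next, $\ker(\tilde\chi)$ is normal in $O_{\chi, U}$: indeed $\ker(\chi) \subseteq \ker(\tilde\chi)$ since $\tilde\chi$ extends $\chi$, and the definition of $L_{\chi, U}$ gives $[O_{\chi, U}, L_{\chi, U}] \subseteq \ker(\chi) \subseteq \ker(\tilde\chi)$, so conjugation by an element of $O_{\chi, U}$ multiplies an element of $\ker(\tilde\chi) \subseteq L_{\chi, U}$ by an element of $\ker(\tilde\chi)$ and thus preserves $\ker(\tilde\chi)$. Finally, $O_{\chi, U}/\ker(\tilde\chi)$ is discrete by Remark~\ref{rem:disquo}. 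Hence $O_{\chi, U}/\ker(\tilde\chi)$ is a discrete quotient of an open subgroup of $G$, so it is type~$\I$ by assumption; this is exactly what was needed.

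I do not expect a genuine obstacle, since the substantive work is already contained in Proposition~\ref{prop:tdlc-step2-nilp} and Corollary~\ref{cor:type-I-char}. The only points that warrant a moment's care are the verification that $\ker(\tilde\chi)$ is normal in $O_{\chi, U}$ (so that the relevant quotient makes sense) and the role of second countability, which is what licenses the use of the streamlined criterion Corollary~\ref{cor:type-I-char}(iv) in place of Corollary~\ref{cor:type-I-char}(iii).
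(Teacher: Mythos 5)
Your proposal is correct and follows exactly the route the paper intends: type~$\I$ passes to open subgroups and quotients for the forward direction, and for the converse you combine Corollary~\ref{cor:type-I-char}(iv) with the equivalence (i)$\Leftrightarrow$(v) of Proposition~\ref{prop:tdlc-step2-nilp} and Remark~\ref{rem:disquo}. The paper's proof is just a one-line citation of these same ingredients; your explicit verification that $\ker(\tilde\chi)$ is normal in $O_{\chi,U}$ is a welcome detail it leaves implicit.
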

\begin{proof}
The type~$\I$ condition passes to quotient groups and to open subgroups (see \cite[Proposition 6.E.21(1)]{BH20}). Thus the `only if' part of the corollary holds. 

The converse follows by combining Corollary~\ref{cor:type-I-char}, Proposition~\ref{prop:tdlc-step2-nilp} and Remark~\ref{rem:disquo}.
\end{proof}
		
Corollary~\ref{cor:G-tdlc} may be combined with Corollary~\ref{cor:G-discrete}. We infer that for a second countable two-step nilpotent tdlc group, the type~$\I$ condition is characterized by a purely algebraic conditions to be satisfied by discrete quotients of open subgroups.

\subsection{Groups with a bilinear commutator map}\label{sec:app-type-I}

Our next goal is to apply the previous results in order to show that several naturally occurring two-step nilpotent tdlc groups are type~$\I$. 

We start with a general discussion. Let  $G$ be a group with center $Z$. Then $G$ is two-step nilpotent if and only if the commutator group $[G, G]$ is contained in $Z$. Suppose that this the case. Given any  subgroup $N$ of $G$ with $[G, G] \leq N \leq Z$, we may view $G$ as a central extension of the abelian quotient $A = G/N$ by $N$. As such, it is described by the cohomology class of a $2$-cocycle $\omega$, which is defined as a map $\omega \colon A \times A \to N$ satisfying the following cocycle identity for all $a, b, c \in A$: 
$$\omega(a+b, c) + \omega(a, b) = \omega(a, b+c) + \omega(b, c), $$
where we have used an additive notation for the group laws of the  abelian groups $A$ and $N$. We shall focus on the case where  $G$ is locally compact and $N$ is closed; the cocycle $\omega$ is then continuous. 

Keeping in mind Theorem~\ref{thm:closed-points} and its consequences established above, we see that the type~$\I$ property of $G$ does not formally depend on the isomorphism type of $G$, or of the cohomology class of $\omega$, but rather on the commutator map 
$$G \times G \to [G, G] : (g, h) \mapsto ghg^{-1}h^{-1}.$$ 
This map is constant on cosets of $Z$, hence on cosets of $N$. Setting $A = G/N$ as above, we see that it induces canonically a map 
$$\gamma \colon A \times A \to N$$
that we also call the \textbf{commutator map}.  

\begin{rem}
It is important to observe that any biadditive map $A \times A \to N$ satisfies the cocycle identity. Since the commutator map on a two-step nilpotent group is biadditive, it follows that it can be viewed as a $2$-cocycle. Thus every two-step nilpotent group $G$ defines another two-step nilpotent group $\widetilde G$, with the same underlying set as $G$, and whose defining $2$-cocycle is the commutator map of $G$. By considering the dihedral group of order~$8$ and the quaternion group of order~$8$, it is easy to see that two non-isomorphic groups may share the same commutator map, and that $G$ need not be isomorphic to $\widetilde G$. If the map $a \mapsto 2a$ defines an automorphism of $N$, then $\widetilde G$ is isomorphic to $\widetilde{\big(\widetilde G\big)}$, see \cite[Proposition~2.8]{Con63}. We do not need those facts, but we find it relevant to record that the commutator map captures enough information to determine whether $G$ is type~$\I$, but not enough information a priori to determine the isomorphism type of $G$. 
\end{rem}

Let us start by observing that Corollary~\ref{cor:type-I-char} recovers a couple of well-known cases of type~$\I$ nilpotent groups. The fact that connected nilpotent locally compact groups are type~$\I$ is due to Dixmier~\cite{Dix59}. For $p$-adic groups, the corresponding result follows from \cite{Fel62} (see \cite[$\S$4]{Mor65} for further details). 

\begin{cor}\label{cor:connected+p-adic}
Let $G$ be two-step nilpotent locally compact group with center $Z$, and $N$ be a closed subgroup with $[G, G] \leq N \leq Z$. Then $G$ is type~$\I$ in each of the following cases. 
\begin{enumerate}[(1)]
\item $G/N$ is almost connected (i.e. the group of components of $G/N$ is compact).

\item $G/N$ has a finite index open subgroup isomorphic to $\Q_p^d \times \ZZ_p^e$ for some prime $p$ and some integers $d, e \geq 0$. 
\end{enumerate}
\end{cor}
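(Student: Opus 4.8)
\textbf{Proof plan for Corollary~\ref{cor:connected+p-adic}.}
The strategy is to verify the criterion in Corollary~\ref{cor:type-I-char}(iii): for every $\chi \in \widehat Z$, the homomorphism $\omega_\chi \colon G/Z \to \widehat{G/Z}$ has closed image and sends open sets to relatively open sets. Since $\omega_\chi$ is trivial on $N$-cosets (as the commutator map is), we may work with the map $\omega_\chi \colon (G/N)/(Z/N) \to \widehat{G/N}$ induced on the abelian quotient $A := G/N$; closedness of its image inside $\widehat{A/(Z/N)} \subseteq \widehat A$ is what we must establish. So the whole question reduces to a statement about a continuous homomorphism from (a quotient of) the abelian group $A$ to its Pontryagin dual $\widehat A$, with the specific feature that this homomorphism is antisymmetric in the sense coming from a commutator pairing.

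In case~(1), $A$ is almost connected abelian. First I would reduce to the case where $A$ is compactly generated (pass to an open finite-index subgroup absorbing the group of components, then observe the type~$\I$ question is insensitive to finite-index open subgroups by Corollary~\ref{cor:type-I-char} and the stability of the type~$\I$ property). A compactly generated almost connected abelian group has the form $\R^a \times \ZZ^b \times K$ with $K$ compact, by the structure theory of LCA groups. The key point is then that the image of a continuous homomorphism between such groups is automatically closed: this is a standard consequence of the structure theory (e.g.\ such a homomorphism factors through a map with compact kernel and the image of a $\sigma$-compact group under a proper-modulo-compact map is closed; alternatively, invoke that $\R^a\times\ZZ^b\times K$ is a ``good'' group in the sense that continuous homomorphic images of it in any LCA group are closed). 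The openness-onto-image condition follows similarly, or can be bypassed entirely by invoking the second-countable clause (iv) of Corollary~\ref{cor:type-I-char} after reducing to a second countable quotient.

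In case~(2), $A$ has an open finite-index subgroup $\cong \Q_p^d \times \ZZ_p^e$, and by the same finite-index reduction we may assume $A = \Q_p^d \times \ZZ_p^e$ itself, which is second countable, so it suffices to check closedness of the image of $\omega_\chi$. Here the essential input is that $\omega_\chi$ is, up to the duality identification, given by an antisymmetric $\Q_p$-... well, $\ZZ_p$-bilinear-ish pairing: since $\ker(\chi)$ is open in $Z$ and the commutator map is continuous and biadditive, the induced pairing $A \times A \to \T$ is continuous and bi-additive, hence (since $A$ is a $\Q_p$/$\ZZ_p$-module and $\T$ has no small subgroups) locally $\ZZ_p$-bilinear; a continuous biadditive map on $\Q_p^d \times \ZZ_p^e$ valued in $\T$ corresponds to a matrix over a suitable localization, and its image is a closed subgroup. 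Concretely, one shows the image is the annihilator of the radical $\{x : \omega_\chi(x)(\cdot) \equiv 1\}$, which is closed because the pairing is continuous; annihilators of closed subgroups in Pontryagin duality are closed, so we are done. I expect the main obstacle to be the verification that the image is closed rather than merely the annihilator of something — i.e.\ ruling out a ``dense but not closed'' image, which for general LCA $A$ can genuinely fail (as the footnote on $\F_p(\!(t)\!)$ illustrates) but does not fail for the almost connected or $p$-adic analytic $A$ at hand precisely because of their structure theory; organizing the two cases so that this closedness falls out cleanly (perhaps by a single lemma: ``a continuous homomorphism from a compactly generated group $A$ that is either almost connected or a $p$-adic analytic group into $\widehat A$ has closed image'') is where the real work lies.
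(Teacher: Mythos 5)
Your overall strategy coincides with the paper's: everything reduces, via Corollary~\ref{cor:type-I-char}, to showing that the induced homomorphism $\omega_\chi \colon A \to \widehat A$ (with $A = G/N$) has closed image, and this is to be read off from the structure theory of $A$. But the step where you actually establish closedness in case~(1) rests on a false lemma. It is not true that a continuous homomorphic image of $\R^a \times \ZZ^b \times K$ (or even of the connected group $\R$) in an arbitrary LCA group is closed: the irrational winding line $\R \to (\R/\ZZ)^2$, $x \mapsto (x, \alpha x) + \ZZ^2$ with $\alpha$ irrational, is injective (so in particular not ``proper modulo compact'') and has dense, non-closed image. So neither the ``good group'' claim nor the proper-modulo-compact argument is available. (Two smaller issues: an almost connected abelian group has no $\ZZ^b$ factor, and its component group is compact but need not be finite, so the finite-index reduction you propose in case~(1) does not apply; the paper instead uses that the vector part $V\cong\R^d$ of $A^\circ$ is cocompact in $A$.) In case~(2) you explicitly flag, but do not close, the same gap: density of the image in the annihilator of the radical is Theorem~\ref{thm:Pog}(iii) and is automatic; the whole content is ruling out ``dense but not closed''.

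What actually closes the gap --- and is the heart of the paper's proof --- is that the target is $\widehat A$ and not an arbitrary LCA group. Since $A/V$ is compact, $V^\perp \cong \widehat{A/V}$ is discrete, so the image of the connected divisible group $V=\R^d$ under $\omega_\chi$ must land in $\widehat V \cong \R^d$; a continuous homomorphism $\R^d \to \R^d$ is a $\Q$-module map by unique divisibility, hence $\R$-linear by continuity, hence has closed image (a vector subspace). This is precisely what excludes the winding-line phenomenon. The image of $A$ is then a union of compactly many cosets of this closed subgroup (as $V$ is cocompact in $A$), hence closed. In case~(2) the analogous input is that $\Q_p^d$ is divisible while $C_p^\infty = \widehat{\ZZ_p}$ has no nontrivial divisible elements, so the restriction of $\omega_\chi$ to $\Q_p^d$ lands in the $\Q_p^d$ factor of the dual and is $\Q_p$-linear by the same divisibility-plus-continuity argument, hence has closed image; cocompactness of $\Q_p^d$ finishes. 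Without this divisibility/linearity step your argument does not rule out a dense non-closed image.
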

\begin{proof}
Set $A = G/N$. Let $\chi \in \widehat Z$ be a character. In view of Corollary~\ref{cor:type-I-char}, we must show that the image of the map $\omega_\chi \colon G \to Z^\perp$ from Equation~\ref{eqn:wchi} is closed. Since $[G, G] \leq N \leq Z$ by hypothesis, we have $Z^\perp \subseteq N^\perp$. Moreover $\omega_\chi$ is trivial on $Z$-cosets, hence also on $N$-cosets. Thus we may naturally view $\omega_\chi$ as a map from $A = G/N$ to its dual $\widehat A$.

Suppose first that $A = G/N$ is almost connected. In particular it is compactly generated. By  \cite[Theorem~23.11]{Str06}, the  group $A $ splits as the direct product $M \times V \times D$, where $M$ is the largest compact subgroup of $A $,  $V \cong \R^d$ is a vector group for some $d \geq 0$, and $D \cong \mathbb Z^c$ is discrete and free abelian. Since $A$ is almost connected, it does not have any infinite cyclic quotient, so $D$ must be trivial. Hence $A \cong M \times V$. In particular $A/V$ is compact. 


It follows from Pontryagin duality that the dual $\widehat A$ splits as the direct product $   V^\perp \times \widehat V $, see \cite[Theorem~24.10]{Str06}. Since $\R$ is self-dual, so is $V$, hence $\widehat V \cong V$. Moreover $V^\perp$ is isomorphic to $\widehat{A/V}$, which is discrete since $A/V$ is compact. 

In order to show that the continuous homomorphism $\omega_\chi \colon A \to \widehat A$ has a closed image, it suffices to show that its restriction to the cocompact subgroup $V$ has a closed image. Since $V$ is connected, the restriction of $\omega_\chi$ to $V$ yields a continuous homomorphism $V \to \widehat V \cong V$. Since those groups are uniquely divisible, any group homomorphism is a morphism of $\Q$-modules, hence an $\R$-linear map by continuity (see \cite[Theorem~24.6]{Str06}). In particular its image is a vector subspace, hence it is closed. This proves the statement under the hypothesis (1). 

Suppose now that $A = G/N$ is virtually a $p$-adic group of the form $A_0 = \Q_p^d \times \ZZ_p^e$. The proof in that case follows a similar pattern. Indeed it suffices to show that the restriction of $\omega_\chi$ to the vector subgroup $\Q_p^d$ has a closed image. Since $\Q_p$ is self-dual, the dual of $A_0$ is isomorphic to $\Q_p^d \times (C_p^\infty)^e$, where $C_p^\infty$ is the dual of $\ZZ_p$, which is isomorphic to group of $p$-power roots of unity. Since $\Q_p^d$ is divisible while $C_p^\infty$ has no non-trivial divisible element, it follows that the restriction of $\omega_\chi $ yields a continuous homomorphism of $\Q_p^d$ to itself. As in the case of $\R$, any such homomorphism is a morphism of $\Q$-modules by divisibility, hence an $\Q_p$-linear map by continuity. Thus the restriction of $\omega_\chi $  to the vector subgroup $\Q_p^d$ has a closed image, and the result follows since $\Q_p^d$ is cocompact in $A$.
\end{proof}

The following result will allow us  to cover broader families of examples including algebraic groups over fields of positive characteristic.  

\begin{thm}\label{thm:bilinear}
Let $k$ be a non-discrete locally compact field. 
Let $G$ be two-step nilpotent locally compact group with center $Z$, and $N$ be a closed subgroup with $[G, G] \leq N \leq Z$. Suppose that the abelian groups $A = G/N$ and $N$  are the additive groups of vector spaces over $k$.  

If the commutator map $\gamma \colon A \times A \to N$ is $k$-bilinear, then $G$ is type~$\I$. 
\end{thm}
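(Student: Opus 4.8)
The plan is to verify condition (iii) of Corollary~\ref{cor:type-I-char}: for each $\chi \in \widehat Z$, the homomorphism $\omega_\chi \colon G/Z \to \widehat{G/Z}$ has closed image, and moreover the image of every open subset is relatively open. Since $[G, G] \leq N \leq Z$, we may as usual view $\omega_\chi$ as a map from $A = G/N$ to its dual $\widehat A$; closedness of the image of this map (and openness of images of open sets) is what we need to establish, since passing from $A$ to the possibly-smaller quotient $G/Z$ only factors the map through a further quotient and cannot destroy these properties (one argues as in the proof of Corollary~\ref{cor:type-I-quot}). First I would fix $\chi$ and observe that, since $N$ is the additive group of a $k$-vector space, the character $\chi$ has an open kernel (as $k$ is non-discrete locally compact, its additive group has no small subgroups only after quotienting — more precisely, restricting attention to a cocompact $k$-line reduces to the one-variable situation). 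The key structural input is that $A$, as the additive group of a finite-dimensional $k$-vector space, is a finite direct sum of copies of $k$, and likewise $\widehat A \cong A$ via a non-degenerate $k$-bilinear pairing together with a fixed non-trivial continuous character $\psi_0$ of $k$.

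The heart of the argument is linear-algebraic. Because $\gamma \colon A \times A \to N$ is $k$-bilinear and $\chi \colon N \to \mathbb T$ is a continuous character, the composite $(a, b) \mapsto \chi(\gamma(a,b))$ is a continuous $\mathbb T$-valued bi-additive form on $A \times A$ that is "$k$-sesquilinear up to $\chi$": for the map $a \mapsto \omega_\chi(a) = \chi(\gamma(a, \cdot)) \in \widehat A$, precomposition with scalar multiplication by $\lambda \in k$ on the source corresponds, on the target $\widehat A \cong A$, to multiplication by $\lambda$ as well (using that $\chi$ extends a character of $k$, so $\chi(\lambda x) $ corresponds to shifting the pairing by $\lambda$). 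Concretely, after identifying $A$ with $k^d$ and $\widehat A$ with $k^d$, the map $\omega_\chi$ becomes a $k$-linear map $T \colon k^d \to k^d$ (given by the matrix of the bilinear form $\chi \circ \gamma$ relative to the chosen bases and the pairing). Indeed the point is precisely that $k$-bilinearity of $\gamma$ upgrades the a priori merely bi-additive form to a $k$-bilinear one after composing with $\chi$, which is the mechanism by which the positive-characteristic obstruction is avoided. A $k$-linear map between finite-dimensional $k$-vector spaces is automatically a closed map (its image is a linear subspace, hence closed, and it is open onto its image since linear surjections of finite-dimensional $k$-vector spaces are open by the open mapping theorem for locally compact fields, or simply because they are, up to linear isomorphism, coordinate projections). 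This gives both the closed-image and the open-map conditions in Corollary~\ref{cor:type-I-char}(iii), and hence $G$ is type~$\I$ (even CCR).

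One technical point deserves care: the identification $\widehat A \cong A$ is canonical only after choosing a non-trivial continuous character $\psi_0$ of $k$, and the precise claim that "$\omega_\chi$ is $k$-linear under this identification" requires checking that the character $\chi$ of $N$, when we run it through $\gamma(\lambda a, b) = \lambda \gamma(a, b) = \gamma(a, \lambda b)$, interacts correctly with the $k$-module structure on $\widehat A$. The $k$-module structure on $\widehat A = \widehat{k^d}$ is the one for which $(\lambda \cdot \varphi)(x) = \varphi(\lambda x)$; then $\omega_\chi(\lambda a)(b) = \chi(\gamma(\lambda a, b)) = \chi(\gamma(a, \lambda b)) = \omega_\chi(a)(\lambda b) = (\lambda \cdot \omega_\chi(a))(b)$, so $\omega_\chi(\lambda a) = \lambda \cdot \omega_\chi(a)$; additivity is clear, so $\omega_\chi$ is a $k$-module homomorphism, and continuity is automatic. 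This computation is short, so I do not expect it to be the main obstacle. \textbf{The main obstacle} I anticipate is purely bookkeeping: making fully rigorous the statement that a continuous $k$-module homomorphism between finite-dimensional $k$-vector spaces (equipped with their natural locally compact topology) is a closed and relatively open map — this is standard (choose bases, reduce to a matrix, use that the image is a subspace and that finite-dimensional $k$-vector spaces carry a unique Hausdorff topological vector space topology, so linear maps are automatically continuous, closed, and open onto their images), but it must be invoked cleanly, perhaps citing the analogue of \cite[Theorem~24.6]{Str06} used for $\mathbb R$ and $\mathbb Q_p$ in the proof of Corollary~\ref{cor:connected+p-adic}, now for general $k$.
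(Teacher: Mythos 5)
Your proposal is correct and follows essentially the same route as the paper: the key computation $\omega_\chi(\lambda a)(b) = \chi(\gamma(a,\lambda b)) = (\lambda\cdot\omega_\chi(a))(b)$, showing that $k$-bilinearity of $\gamma$ upgrades $\omega_\chi$ to a $k$-linear map of finite-dimensional $k$-vector spaces whose image is therefore a closed subspace, is exactly the paper's argument (the paper records only that the image is a $k$-subspace, and sidesteps the openness condition by noting at the outset that finite-dimensionality over $k$ makes $G$ second countable, so condition (iv) of Corollary~\ref{cor:type-I-char} suffices). The only slip is your aside that $\chi$ has open kernel because $N$ is a $k$-vector group --- false for archimedean $k$ --- but that remark plays no role in the argument.
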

\begin{proof}
Since $A$ and $N$ are locally compact, they are finite-dimensional over $k$. In particular $G$ is second countable. 

Let $\chi \in \widehat N$ be a character. In view of Corollary~\ref{cor:type-I-char}, we must show that the image of the map $\omega_\chi \colon G \to N^\perp \cong \widehat{G/N}$ from Equation~\ref{eqn:wchi}  is closed (see Remark~\ref{rem:N}). 

The dual $\widehat A \cong \widehat N$  is naturally a $k$-vector space, with scalar multiplication defined by $\lambda \psi(a) = \psi(\lambda a)$ for any $\psi \in \widehat{A}$, $\lambda \in k$ and $a \in A$. The locally compact field $k$, hence also the group $A$,  are isomorphic to their dual (see Theorem~3 on p.~40 in \cite{Weil}). Now, using the $k$-bilinearity of $\gamma$, we observe that 
\begin{align*}
\lambda \omega_\chi(g)(a) & = \chi \circ \gamma(g, \lambda a) \\
& = \chi \circ \gamma(\lambda g,a)\\
& = \omega_\chi(\lambda g)(a)
\end{align*}
for all $\lambda \in k$ and $g, a \in N$. Thus the image of $\omega_\chi$ is invariant under scalar multiplication. Since  $\omega_\chi$ is a group homomorphism, its image is also stable under addition. Therefore, the image of $\omega_\chi$ is a vector subspace, hence the zero set of a family of $k$-linear forms. This implies that it is closed. 	
\end{proof}

We shall now establish Corollary~\ref{corintro:algebraic} stated in the introduction. 	
In the case~(3), the proof uses the notion of a \textbf{Moufang set}. We refer to \cite{DMS09} for an introduction to this topic. We do not repeat the detailed definitions, some of which are  rather technical, but we only describe the key points from which the desired conclusion follows. Let us moreover note that special cases of  Moufang sets of skew-hermitian type appearing below are those associated with a  separable quadratic extension of $k$, defined in \cite[Def.~5.4.1]{DMS09}. The latter suffices to treat the unipotent radical of minimal parabolics in the unitary groups $SU_3$.

\begin{proof}[Proof of Corollary~\ref{corintro:algebraic}]
We must show that the hypotheses of Theorem~\ref{thm:bilinear} are satsified. 

For the groups satisfying (1) or (2), this follows from the definitions. 

For the groups satisfying (3), the verification is more involved. Clearly we may assume that the group is non-abelian. Moreover, the case where $k = \R$ or $\C$ being covered by Corollary~\ref{cor:connected+p-adic}, we may further assume that $k$ is a non-Archimedean local field (see \cite[Chapter~1]{Weil}).

Let us first observe that such a group $U$ can indeed be viewed as a central extension of vector groups over $k$: indeed, this follows from \cite[\S 3.17]{BT65}. To check the bilinearity of the commutator map, we invoke the classification of the simple algebraic $k$-groups of $k$-rank one. Since $k$ is a non-Archimedean local field, all such groups are of classical type (see the tables in \cite[\S 4]{Tit79}). Moreover, by \cite[Chapter~1]{BT87}   (see also \cite[Remark~4.5]{TDM} and  \cite[Corollary~1.4.5]{BDMS19}), the classification implies that $U$ may be viewed as the root group of a Moufang set of skew-hermitian type, as defined in \cite[\S 4.1]{TDM}.  Such a Moufang set is determined by a pseudo-quadratic space $(D, D_0, \sigma, V, p)$ as defined in \cite[(11.17)]{TW02}. By definition, this means that  $D$ is a skew-field with center $K$ which, in our case, is finite-dimensional over $K$, $\sigma$ is an involution of $D$, $D_0$ is a $K$-subspace of $D$ containing $1$,  such that $a^\sigma D_0 a \subseteq D_0$ for all $a \in D_0$, $V$ is a right $D$-module, and $p$ is a pseudo-quadratic form with corresponding skew-hermitian form $h$. Moreover, the defining ground field $k$ coincides with the fixed field $k =  \mathrm{Fix}_K(\sigma)$ of $\sigma$, which is of codimension~$1$ or $2$ in $K$. 

The group $U$ is defined as in   \cite[(11.24)]{TW02}. It is the set 
$$U = \{(v, a) \in V \times D \mid p(v) - a \in D_0\}$$ 
with group multiplication 
$$(v, a).(w, b) = (v+w, a+b+h(w, v)).$$
We define the closed normal subgroup $N$ as $\{(0, a) \in U  \mid a \in D_0\}$. Observe that $N \cong D_0$ is a $K$-vector group, hence a $k$-vector group. One checks that $U/N$ is also a $k$-vector group with scalar multiplication induced by $\lambda.(v, a) = (v\lambda, a\lambda^2)$. Moreover, one computes that the commutator map is 
\begin{align*}
\gamma \colon \qquad  U/N \times U/N & \to D_0 \\
 \big((v, a)N , (w, b)N\big) &\mapsto h(w, v) - h(v, w) = h(w, v) + h(w, v)^\sigma.
\end{align*}
This need not be bilinear over $K$, but it is bilinear over  $k$.
\end{proof}


\section{Contraction groups}	\label{sec:contraction}

\subsection{Definition}	
In this section, we address the type~$\I$ dichotomy for some tdlc contraction groups.

Let us start by recalling the definition. It is assumed throughout that  automorphisms of topological groups are bicontinuous.

\begin{dfn}
Let $G$ be a topological group and $\alpha \in \Aut(G)$. The automorphism $\alpha$ is \textit{contractive} if for all $g \in G$, $\alpha^n(g) \rightarrow \id_G$ as $n \rightarrow \infty$. A \textit{contraction group} is a pair $(G,\alpha)$, where $G$ is a topological group, and $\alpha \in \Aut(G)$ is contractive. If $G$ satisfies a property P (e.g.\ locally compact, totally disconnected, torsion...) then we call $(G,\alpha)$ a P contraction group.
\end{dfn}

The structure theory of locally compact contraction groups is studied thoroughly in the articles \cite{Sie86,GW10,GW21,GW21b}.	In view of the important role played by contraction groups within the structure theory of general tdlc groups, studying their representation theory is rather natural (see  \cite{Car24} for more information). Contraction groups also appear naturally in algebraic groups over local fields; indeed, unipotent radicals of parabolic subgroups are all contraction groups (see \cite[Lemma~2.4]{Pra77}). In particular, the examples treated by Corollary~\ref{corintro:algebraic}(3) are all contraction groups.

\subsection{Two-step nilpotent groups with monomial commutation relations}

We shall consider the family of two-step nilpotent  groups defined as follows. 

\begin{dfn}\label{def:monomial}
Let $p$ be a prime, $\F_p$ be the finite field of order $p$, and $A = \F_p(\!(t)\!)$ the local field of formal Laurent series with coefficients in $\F_p$. Let $G$ be a two-step nilpotent with center $Z$ and $N$ be a closed subgroup with $[G, G] \leq N \leq Z$, such that $G/N$ and $N$ are both isomorphic to the additive group $A$. Denote by $\gamma \colon A \times A \to A$ the commutator map. We say that $G$ has \textbf{monomial commutation relations} if there exists a map $u \colon \ZZ \times \ZZ \to \F_p$ such that
$$\gamma(t^m, t^n ) = u(m, n) t^{m+n}$$
for all $m, n \in \ZZ$. Notice that such a group $G$ is second countable, since $A$ is so. 
\end{dfn}

Since the commutator map is continuous and bi-additive, the identity $\gamma(t^m, t^n ) = u(m, n) t^{m+n}$ completely determines the map $\gamma$. 
	
In order for  a group $G$ as in Definition~\ref{def:monomial} to admit a contractive automorphism that preserves $N$, it is necessary that $A$ has two contractive automorphisms, say $\alpha, \beta$, such that $\gamma(\alpha(g), \alpha(h)) = \beta(\gamma(g, h))$ for all $g, h \in G$. The most natural contractive automorphisms are given by the multiplication by  positive powers of $t$. If $\alpha \colon a \mapsto t^d a$ and $\beta \colon a \mapsto t^{2d} a$, we see that the map $u$ from Definition~\ref{def:monomial} must satisfy
$$u(m+d, n+d) = u(m, n)$$
for all $m, n \in \ZZ$. This is clearly the case if $u(m, n) = \sigma_{m-n}$, where $\sigma \colon \ZZ \to \F_p : z \mapsto \sigma_z$ is a bi-infinite sequence of elements of $\F_p$. Since the map $u$ satisfies $u(n, n)=0$ and $u(m, n) = -u(n, m)$ for all $m, n$, we see that $\sigma_0 = 0$ and $\sigma_{-z} = -\sigma_z$ for all $z$. Thus $\sigma$ is uniquely determined by its restriction to the positive integers. If $G$ is as in Definition~\ref{def:monomial} and if moreover $u(m, n) = \sigma_{m-n}$, we say that $G$ has \textbf{monomial commutation relations of type~$\sigma$}.

\begin{thm}\label{thm:monomial-type-I}
Let $G$ be a two-step nilpotent tdlc group with monomial commutation relations of type~$\sigma$. Then $G$ is type~$\I$ if any of the following conditions are satisfied for some integer $c>0$. 
\begin{enumerate}[(1)]
\item  $\sigma_z \neq 0$ for all $z \in \ZZ$ with $|z| > c$. 

\item $\sigma_z = 0$ for all  $z \in 2\ZZ$, and $\sigma_z \neq 0$ for all  $z \in 2\ZZ+1$ with $|z| > c$. 

\item There is an integer $d>0$ such that $\sigma_z = 0$ for all $z \not \in d\ZZ$, and  $\sigma_z \neq 0$ for all   $z \in d\ZZ$ with $|z| > c$. 
\end{enumerate}
\end{thm}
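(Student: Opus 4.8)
The plan is to use the criterion of Corollary~\ref{cor:type-I-char}: since $G$ is two-step nilpotent and second countable, it suffices to show that for every $\chi \in \widehat Z$ the homomorphism $\omega_\chi \colon G/Z \to \widehat{G/Z}$ has closed image. Because $[G,G] \le N \le Z$ and the commutator map factors through $A = G/N \cong \F_p(\!(t)\!)$, the map $\omega_\chi$ factors through $A$ and is determined by a character $\psi = \chi|_N$ of $A$ (together with the part of $\chi$ on $Z/N$, which however is killed by the commutator and so is irrelevant). So the task reduces to the following concrete statement: for the continuous bi-additive form $\gamma \colon A \times A \to A$ with $\gamma(t^m,t^n) = \sigma_{m-n} t^{m+n}$, and for every non-trivial additive character $\psi$ of $A$, the map $a \mapsto (b \mapsto \psi(\gamma(a,b)))$ from $A$ to $\widehat A$ has closed image. (When $\psi$ is trivial there is nothing to prove.)

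The key computational step is to identify $L_{\chi}$ (equivalently the subgroup $L_{\chi,U}$ of Proposition~\ref{prop:tdlc-step2-nilp}, taking $U = t^{N}\F_p[[t]]$ for suitable $N$) in terms of $\sigma$, and then to pass to the explicit quotient $O_{\chi,U}/L_{\chi,U}$ and show it is finite in cases~(1)--(3). Concretely I would use Proposition~\ref{prop:tdlc-step2-nilp}, which tells us closedness of the image is equivalent to finiteness of $O_{\chi,U}/L_{\chi,U}$ — a purely algebraic condition. Writing $\psi$ via a ``residue pairing'' $\psi_0(a) = \epsilon(\mathrm{res}(t^{e} a))$ for the appropriate level $e \in \ZZ$ and a fixed non-trivial character $\epsilon$ of $\F_p$, one computes that $\psi(\gamma(t^m, a)) $ depends only on a single coefficient of $a$, determined by $m$, $e$, and the support of $\sigma$. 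From this one reads off: $g = \sum_m g_m t^m \in O_{\chi,U}$ iff a certain (infinite, $\sigma$-twisted) linear combination of the $g_m$ vanishes on each relevant coordinate; and $g \in L_{\chi,U}$ iff additionally a second family of such combinations vanishes. The quotient $O_{\chi,U}/L_{\chi,U}$ is then governed by the ``overlap'' between these conditions, which is where the hypotheses on $\sigma$ enter: in case~(1) the fact that $\sigma_z \ne 0$ for all large $|z|$ forces only finitely many coordinates to survive; in case~(2), the vanishing of $\sigma$ on the even integers decouples the problem into two independent copies of the ``odd-support'' situation; in case~(3), rescaling by $t^{d}$ and splitting $A$ into the $d$ residue classes of exponents mod $d$ reduces case~(3) to case~(1) applied to the sub-lattice $d\ZZ$.

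The main obstacle I expect is the bookkeeping in this coordinate computation: one must carefully track, as $\psi$ ranges over all non-trivial characters (equivalently as the level $e$ of $\psi$ ranges over $\ZZ$), which finite set of ``boundary'' exponents $m$ can fail the defining linear relation for $O_{\chi,U}$ and how these interact with the condition cutting out $L_{\chi,U}$. Since $\sigma$ is only assumed to be eventually non-zero (off a sub-lattice in cases~(2),(3)), the finitely many exceptional indices near the ``edge'' $|z| \le c$ could in principle conspire; the content of the proof is that they contribute only a bounded-dimensional $\F_p$-space to $O_{\chi,U}/L_{\chi,U}$, uniformly in $\psi$. I would handle this by showing that outside a window of exponents of size $O(c)$ (depending on $e$ by a translation), membership in $O_{\chi,U}$ already forces membership in $L_{\chi,U}$, so that the quotient injects into a fixed finite-dimensional $\F_p$-vector space. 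Once finiteness of $O_{\chi,U}/L_{\chi,U}$ is established for every $\chi$, Proposition~\ref{prop:tdlc-step2-nilp} and Corollary~\ref{cor:type-I-char} give that $G$ is type~$\I$ (indeed CCR).
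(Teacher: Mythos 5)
Your overall strategy coincides with the paper's: reduce via Corollary~\ref{cor:type-I-char} and Proposition~\ref{prop:tdlc-step2-nilp} to the finiteness of $O_{\chi,U}/L_{\chi,U}$ for every central character, settle that by an explicit $\F_p$-linear computation on coefficients, and treat case~(3) by splitting $A$ into the residue classes of exponents mod~$d$. There is, however, a genuine gap in the central computation: you parametrize the non-trivial characters of $A=\F_p(\!(t)\!)$ by a single integer $e$ via $\psi(a)=\epsilon(\mathrm{res}(t^{e}a))$ and conclude that $\psi(\gamma(t^m,a))$ depends on a single coefficient of $a$. This covers only the ``monomial'' characters, i.e.\ those whose coefficient sequence $a_j=\psi(t^j)$ is supported at one index. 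A general continuous character corresponds to an arbitrary sequence $(a_j)_{j\in\ZZ}$ vanishing for $j\gg 0$ (equivalently $\psi(a)=\epsilon(\mathrm{res}(ya))$ for an arbitrary $y\in A$), and for such $\psi$ the quantity $\psi(\gamma(t^m,x))=\sum_i \sigma_{i-m}\,a_{i+m}\,x_i$ genuinely mixes many coefficients of $x$, so the system cutting out $O_{\chi,U}$ is not diagonal. Since the closed-image condition must be verified for \emph{every} character, and you offer no mechanism reducing a general character to a monomial one (the commutator map is not $k$-bilinear, otherwise Theorem~\ref{thm:bilinear} would already apply), the argument as sketched does not close. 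The missing idea is the one the paper exploits: after discarding the vacuous equations, the system is \emph{lower triangular} in the coefficients of $x$, with diagonal entries of the form $\sigma_{2i_0-k_0+2\ell}\,a_{k_0}$, where $k_0$ is the largest index with $a_{k_0}\neq 0$; the hypotheses on $\sigma$ then bound the number of vanishing diagonal entries independently of $x$, which is what forces $O_{\chi,U}/L_{\chi,U}$ to be finite.

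A secondary point on case~(2): the group does not decouple into two independent copies of anything --- the even- and odd-exponent parts of $A$ are each isotropic for $\gamma$ and all the non-commutativity lives \emph{between} them. What does decouple is the linear system (equations indexed by even $m$ constrain only odd-indexed coefficients and vice versa), and that observation is correct and usable once the triangularity issue above is repaired; one must then also handle separately the sub-case where $\psi$ vanishes on all odd-exponent monomials, in which every equation is vacuous and one checks directly that $O_{\chi,U}=L_{\chi,U}=G$.
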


\begin{proof}
Since $A \cong N$ is a group of exponent $p$, the image of every non-trivial character is a cyclic group of order~$p$. We may thus view the Pontryagin dual $\widehat N$  of $N$ as the group of  continuous homomorphisms $\chi \colon N \to \F_p$. 

In order to show that $G$ is type~$\I$, we shall apply Proposition~\ref{prop:tdlc-step2-nilp} for an arbitrary character $\chi \in \widehat N$ (see Remark~\ref{rem:N}). We may assume without loss of generality that $\chi$ is non-trivial. Set $a_m = \chi(t^m)$ for all $m \in \ZZ$. Since $\chi$ is continuous, we have $a_m = 0$ for all sufficiently large $m$. Thus it makes sense to define $k_0 \in \ZZ$ as the largest integer such that $a_{k_0} \neq 0$. We also set $K_0 = \max\{-1, k_0\}$ and set 
$$U^A = t^{K_0+1}\F_p[\![t]\!].$$
Hence $U^A$ is a compact open subgroup of $A$, and there exists a compact open subgroup $U$ of $G$ whose projection to $A \cong G/N$ coincides with $U^A$. Moreover, it follows from the definitions that $[U, U] \subseteq \ker(\chi)$. 

Let $O_{\chi, U} = \{g \in G\mid [g, U] \subseteq \ker(\chi)\}$, as in Proposition~\ref{prop:tdlc-step2-nilp}. Let also $O_{\chi, U}^A \leq A$ be the image of   $O_{\chi, U}$ under the canonical projection $G \to G/N \cong A$. An element $x = \sum_{i=i_0}^\infty x_i t^i  \in A$ belongs to $O_{\chi, U}^A $ if and only if $\gamma(x, t^m) \in \ker(\chi)$ for all $m > K_0$. Using that $\chi$ is continuous and $\F_p$-linear, we deduce that 
\begin{align}\label{eq:1}
x \in O_{\chi, U}^A  \Longleftrightarrow 
\sum_{i=i_0}^\infty \sigma_{i-m} a_{i+m} x_i = 0 \hspace{1cm} \forall m > K_0. 
\end{align}
It is important to keep in mind  that $a_k =0$ for all $k > k_0$. Thus, in the equations above, the terms indexed by any $i > k_0 - m$ are all zero, so that the series is actually a finite sum. Thus we obtain 
\begin{align}\label{eq:2}
x \in O_{\chi, U}^A  \Longleftrightarrow 
\sum_{i=i_0}^{k_0-m} \sigma_{i-m} a_{i+m} x_i = 0 \hspace{1cm} \forall m > K_0.
\end{align}
We view that condition as a system of $\F_p$-linear equations that the coefficients $(x_i)_{i\geq i_0}$ of $x$ must satisfy. 
Using again that $a_k =0$ for all $k > k_0$, we see that all the equations corresponding to any $m > k_0 - i_0$ are trivial. Thus the system has only finitely many equations, one for each value of $m$ satisfying $K_0 < m \leq k_0 - i_0$. We rewrite the system by numbering the equations by the number $\ell = k_0 - i_0 -m$ and by indexing the sum over  $n= i - i_0$. We obtain that the condition $x \in O_{\chi, U}^A$ holds if and only if 
\begin{align}\label{eq:3--}
\sum_{n=0}^{\ell} \sigma_{2i_0 + \ell - k_0 + n}  a_{k_0 - \ell + n } x_{i_0 + n} = 0 \hspace{1cm} \forall \ell \in \{0, 1, \dots,   k_0  -i_0 - K_0 -1\}.
\end{align}
We may write this system of linear equations in matrix form as follows:
\begin{align}\label{eq:3}
\left( 
\begin{matrix}
\sigma_{2i_0-k_0} a_{k_0} & 0 & 0 & \cdots \\
\sigma_{2i_0-k_0+1} a_{k_0-1} & \sigma_{2i_0-k_0+2} a_{k_0} & 0 & \cdots \\
\sigma_{2i_0-k_0+2} a_{k_0-2} & \sigma_{2i_0-k_0+3} a_{k_0-1} &  \sigma_{2i_0-k_0+4} a_{k_0} & \cdots \\
\vdots & \vdots & \vdots & \ddots
\end{matrix}
\right) 
\left( 
\begin{matrix}
x_{i_0}\\
x_{i_0+1}\\
x_{i_0+2}\\
\vdots
\end{matrix}
\right)
= 
\left(
\begin{matrix}
0\\
0\\
0\\
\vdots
\end{matrix}	
\right)
\end{align}

Observe that the matrix in that system (\ref{eq:3}) is lower triangular. For $\ell = 0, 1, \dots$, the  diagonal entry on the $(\ell+1)^{\text{st}}$ row is 
$$\sigma_{2i_0  - k_0 + 2\ell}  a_{k_0 }.$$
Moreover we have $a_{k_0} \neq 0$. 

\medskip
Assume now that the condition (1) holds. Then the number of rows with a zero diagonal coefficient is at most $c+1$. Thus the space of solutions of the system (\ref{eq:3}) is finite-dimensional, and its dimension  is bounded independently of $i_0$.  This implies that the $\F_p$-dimension of $O_{\chi, U}^A$ is finite. Therefore $O_{\chi, U}/N$ is finite. Since  the group $L_{\chi, U}$ defined in Proposition~\ref{prop:tdlc-step2-nilp} contains $N$, we infer that $O_{\chi, U}/L_{\chi, U}$ is finite. In view of Proposition~\ref{prop:tdlc-step2-nilp}  and Corollary~\ref{cor:type-I-char}, this confirms that $G$ is type~$\I$. 

\medskip
Assume now that the condition (2) holds. We distinguish two cases. 

Suppose first that $a_k = 0$ for all odd $k$. Observe that the integers $i-m$ and $i+m$ have the same parity. By assumption $\sigma_k = 0$ for all even $k$ and $a_k=0$ for all odd $k$. It follows that all the linear equations in the system (\ref{eq:1}) above are trivial in the case at hand. Hence $O_{\chi, U}^A = A$ so that $O_{\chi, U} = G$. Now an element $x \in A$ belongs to the projection of $L_\chi$ if and only if it satisfies the system  (\ref{eq:1}) for all $m \in \ZZ$. Every equation in that system being trivial, we infer that $L_\chi = G$, hence $L_{\chi, U} = G$, so that the quotient $O_{\chi, U}/L_{\chi, U}$ is trivial in this case. 

Suppose now that there exists an odd integer $k$ such that $a_k \neq 0$. We let $\ell_0$ be the largest such integer. In particular, we have $\ell_0 \leq k_0$. It follows that the first $k_0 - \ell_0$ rows of the matrix of the system~(\ref{eq:3}) are trivial. After discarding them, we obtain a lower triangular matrix whose diagonal entries are of the form 
$$\sigma_{2i_0  +2\ell - 2k_0 + \ell_0}  a_{\ell_0 }.$$
As in case (1) above, we deduce from the hypotheses that the number of rows with a zero diagonal coefficient is  bounded independently of $i_0$. As above, this implies that the $\F_p$-dimension of $O_{\chi, U}^A$ is finite, so that $O_{\chi, U}/L_{\chi, U}$ is finite also in this case.

In particular $O_{\chi, U}/L_{\chi, U}$ is finite in both cases, hence $G$ is type~$\I$ by Proposition~\ref{prop:tdlc-step2-nilp}  and Corollary~\ref{cor:type-I-char}. 

\medskip 
Assume finally that the condition (3) holds. For each $i \in \{0, 1, \dots, d-1\}$, we set $A_i = t^i \F_p(\!(t^d)\!) \leq A$ and define $G_i$ as the preimage of $A_i$ under the canonical projection $G \to G/N \cong A$. Each $G_i$ is a closed normal subgroup of $G$, and we have $G = G_0 G_1 \dots G_{d-1}$. Moreover, in view of (3), we have $[G_i, G_j]= \{e\}$ for all $i \neq j$. It follows that the product map
$$G_0 \times G_1 \times \dots \times  G_{d-1} \to G : (g_0, g_1, \dots, g_{d-1}) \mapsto g_0 g_1\dots g_{d-1}$$ 
is a continuous surjective homomorphism. Recall that the type~$\I$ property is preserved under forming direct products (see \cite[Proposition~6.E.21(3)]{BH20}). Therefore, to show that $G$ is type~$\I$, it suffices to show that $G_i$ is type~$\I$ for each $i$.

Fix $i \in \{0, 1, \dots, d-1\}$. For all $a, b \in \ZZ$, we have $t^{i+ad}, t^{i+bd} \in A_i$ and 
$$\gamma(t^{i+ad}, t^{i+bd}) = \sigma_{(a-b)d} t^{2i + (a+b)d}.
$$ 
Set $N_2 = t^{2i}\F_p(\!(t^d)\!)$ and $N_1 = \bigoplus_{j \not \equiv 2i \mod d} t^j \F_p(\!(t^d)\!)$. Thus $\F_p(\!(t)\!)\cong  N \cong N_1 \times N_2$, and we have $[G_i, G_i] \leq N_2$. In view of Corollary~\ref{cor:type-I-quot}, we infer that $G_i$ is type~$\I$ if and only if $G_i/N_1$ is so. 

We claim that $G_i/N_1$ has monomial commutation relations. In order to check this, we view $N/N_1$ as a subgroup of $G_i/N_1$. We identify the quotient $A_i \cong G_i/N \cong (G_i/N_1)/(N/N_1)$ with $A$ via the isomorphism sending $t^{i+nd}$ to $t^n$ for all $n \in \ZZ$. We moreover identify the group $ t^{2i}\F_p(\!(t^d)\!) = N_2 \cong N/N_1$ with $A$ via the isomorphism sending $t^{2i+nd}$ to $t^n$ for all $n \in \ZZ$. After those identifications, we see that the commutator map of $G_i$ satisfies the identity
$$\gamma(t^a, t^b) = \sigma_{(a-b)d} t^{a+b}
$$ 
for all $a, b \in \ZZ$. Now the condition (3) implies that $G_i$ satisfies the condition (1). Thus it is type~$\I$, and the proof is complete. 
\end{proof}

\begin{rem}
Observe that in all cases covered by Theorem~\ref{thm:monomial-type-I}, the restriction of $\sigma$ to $\NN$ is an asymptotically periodic sequence, which is not asymptotically zero. That condition is however not sufficient for $G$ to be type~$\I$, by virtue of the following result.
\end{rem}
%
%
%
%

\begin{thm}\label{thm:non-type-I} 
Let $G$ be a two-step nilpotent tdlc group with monomial commutation relations of type~$\sigma$. Let $d>0$ be an integer with $\sigma_d \neq 0$. If $\sigma_{dn} = 0$ for all $n \geq 2 $, then $G$ is not type~$\I$. 
\end{thm}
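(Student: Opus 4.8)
The plan is to exhibit a single character $\chi$ of the center of $G$ for which the homomorphism $\omega_\chi$ of Corollary~\ref{cor:type-I-char} fails to have closed image; this will show $G$ is not type~$\I$. Since $\sigma_d \neq 0$, the bi-additive form $\gamma$ has trivial radical, so the center of $G$ is exactly $N$, and we may regard $\chi$ as an element of $\widehat N$ and identify $G/Z = G/N$ with $A = \F_p(\!(t)\!)$. To detect non-closedness I will invoke the equivalence (i)~$\Leftrightarrow$~(ii) in Proposition~\ref{prop:tdlc-step2-nilp}: it suffices to produce a compact open subgroup $U \leq G$ with $[U,U] \subseteq \ker(\chi)$ for which the discrete group $O_{\chi,U}/L_{\chi,U}$ is infinite. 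The choice of $\chi$ is dictated by the shape of $\sigma$: the hypotheses ($\sigma$ odd, $\sigma_0 = 0$, $\sigma_{dn}=0$ for $n\geq 2$) say exactly that the restriction of $\sigma$ to $d\ZZ$ is supported on $\{d,-d\}$ with $\sigma_d = -\sigma_{-d}\neq 0$; off $d\ZZ$ nothing is known, which is why $\chi$ should be chosen so as to ``see'' only exponents lying in $d\ZZ$.

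Concretely, I will take $\chi\colon A\to\F_p$ to be the continuous homomorphism with $\chi(t^{dj})=1$ for every integer $j\leq 0$ and $\chi(t^k)=0$ for all other $k$; this is well defined and continuous since it vanishes on the open subgroup $t\F_p[\![t]\!]$. For $U$ I will take the intersection of any compact open subgroup of $G$ with the preimage of $t\F_p[\![t]\!]$ under $G\to A$, so that its image $U^A$ in $A$ is contained in $t\F_p[\![t]\!]$; then $[U,U]\subseteq\ker(\chi)$, because every commutator of elements of $U$ is supported on exponents $\geq 2$, where $\chi$ vanishes.

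With these choices, write $A_0 = \F_p(\!(t^d)\!)\leq A$, and let $O^A_{\chi,U}, L^A_{\chi,U}\leq A$ be the images of $O_{\chi,U}, L_{\chi,U}$. Since $N = \ker(G\to A)$ is central and contained in $L_{\chi,U}$, one gets $O_{\chi,U}/L_{\chi,U}\cong O^A_{\chi,U}/L^A_{\chi,U}$, with $O^A_{\chi,U}$ consisting of those $x$ with $\chi(\gamma(x,t^m))=0$ for all $m$ defining $U^A$, and $L^A_{\chi,U}$ of those $x\in O^A_{\chi,U}$ with $\chi(\gamma(x,y))=0$ for all $y\in O^A_{\chi,U}$. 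The first step is a short case-check showing $A_0\subseteq O^A_{\chi,U}$: for $x\in A_0$ and $m\geq 1$, a nonzero term of $\chi(\gamma(x,t^m))=\sum_i x_i\,\sigma_{i-m}\,\chi(t^{i+m})$ would require $i\in d\ZZ$, $i-m\in\{d,-d\}$ and $i+m\leq 0$, which forces $m\leq 0$. The decisive step is then to show that the elements $t^{-dn}$ ($n\geq 0$), all lying in $A_0\subseteq O^A_{\chi,U}$, represent pairwise distinct cosets of $L^A_{\chi,U}$: for $0\leq m<n$, test against $y=t^{-d(n+1)}\in A_0\subseteq O^A_{\chi,U}$ and compute, using $\gamma(t^a,t^b)=\sigma_{a-b}t^{a+b}$, that $\chi(\gamma(t^{-dn},y))=\sigma_d\,\chi(t^{-d(2n+1)})=\sigma_d\neq 0$ while $\chi(\gamma(t^{-dm},y))=\sigma_{d(n-m+1)}\,\chi(t^{-d(m+n+1)})=0$, the latter because $n-m+1\geq 2$. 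Hence $t^{-dn}-t^{-dm}\notin L^A_{\chi,U}$, so $O_{\chi,U}/L_{\chi,U}$ is infinite, and Proposition~\ref{prop:tdlc-step2-nilp} together with Corollary~\ref{cor:type-I-char} finishes the argument.

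The main obstacle is really the design of $\chi$ and $U$: once $\chi$ is taken to be nonzero exactly on $\{t^{dj}:j\leq 0\}$, the two computations are mechanical, the point being that $\gamma(t^{-dn},t^{-d(n+1)})$ lands on a negative power of $t^d$ that $\chi$ detects, whereas for $m<n$ the coefficient $\sigma_{d(n-m+1)}$ appearing in $\gamma(t^{-dm},t^{-d(n+1)})$ is killed by the hypothesis $\sigma_{d\ell}=0$ for $\ell\geq 2$. A secondary technical point worth checking carefully is the identification $O_{\chi,U}/L_{\chi,U}\cong O^A_{\chi,U}/L^A_{\chi,U}$ and the description of $O^A_{\chi,U}, L^A_{\chi,U}$ via $\gamma$ and $\chi$, which holds because $N$ is the kernel of $G\to A$, is central, and is contained in $L_{\chi,U}$.
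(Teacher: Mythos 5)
Your proof is correct, and it takes a genuinely different route from the paper's. The paper never chooses a central character explicitly: it constructs an open subgroup $O\leq G$ and a discrete quotient $Q=O/UN_1$ containing infinitely many pairwise commuting non-abelian subgroups with independent commutator groups, then concludes via Corollary~\ref{cor:non-v-ab}, Thoma's theorem (through Corollary~\ref{cor:G-discrete}) and Corollary~\ref{cor:G-tdlc}. You instead produce one explicit character $\chi$ (supported on $\{t^{dj}: j\leq 0\}$) and verify directly that $O_{\chi,U}/L_{\chi,U}$ is infinite, which rules out type~$\I$ by Proposition~\ref{prop:tdlc-step2-nilp} and Corollary~\ref{cor:type-I-char}; in effect you perform at the level of $G$ the character construction that the paper delegates to the proof of Corollary~\ref{cor:non-v-ab}. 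The witnesses are analogous (the paper uses $t^{-3nd}$ and $t^{-3nd-d}$; you use $t^{-dn}$ tested against $t^{-d(n+1)}$), and both arguments hinge on the same feature of $\sigma$, namely $\sigma_d\neq 0$ while $\sigma_{d\ell}=0$ for $|\ell|\geq 2$. Your version is shorter and bypasses the discrete-quotient machinery; the paper's yields as a by-product an explicit non-virtually-abelian discrete quotient of an open subgroup. Two points you state tersely but which do check out: the vanishing $\chi(\gamma(x,t^m))=0$ for $x\in A_0$, $m\geq 1$ ultimately uses $d\mid m$ (forced by $d\mid i$ and $d\mid(i+m)$), so that $m\geq d$ contradicts $2m=(i+m)-(i-m)\leq d$; and the identification $O_{\chi,U}/L_{\chi,U}\cong O^A_{\chi,U}/L^A_{\chi,U}$ is legitimate because both subgroups contain the central subgroup $N$ and are full preimages of their images in $A$.
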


\begin{proof}
We shall prove that $G$ has an open subgroup $O$ admitting a discrete quotient that is not type~$\I$. The required conclusion follows (see Corollary~\ref{cor:G-tdlc}). 

Let $U^A = \F_p[\![t ] \!]$ and $U$ be a compact open subgroup of $G$ whose image under the canonical projection $G \to G/N\cong A$ is $U^A$. Upon replacing $U$ by a possibly larger group, we may assume that $U \cap N = t^{-n_0}\F_p[\![t ] \!]$ for some $n_0 >0$. 

We also define
$$O^A = \bigg(\bigoplus_{n > 0} \langle t^{-3nd} \rangle \oplus \langle  t^{-3nd - d} \rangle \bigg) \oplus U^A,$$
and $O$ be the preimage of $O^A$ in $G$. Finally, we define 
$$N_1 = \bigg(\bigoplus_{n > 0, n \not \equiv -d \mod 6d} \langle t^{-n} \rangle \bigg) \oplus \F_p[\![t ] \!],$$
viewed as a subgroup of $N \cong A$.  Thus $N_1$ is contained in the center of $G$. 

We claim that $[O, U] \subseteq N_1$. Since the commutator is constant on cosets of $N$ in $G$, it suffices to show that for all integers $a, b \in \ZZ$ with $t^a \in O^A$ and $t^b \in U^A$, we have $\gamma(t^a, t^b) \in N_1$. 

Let $t^a \in O^A$ and $t^b \in U^A$. 
We have $b \geq 0$. If $a \geq 0$, then the fact that $\gamma(t^a, t^b) \in N_1$ is clear. If $a < 0$, we have $a = -3nd$ or $a = -3nd -d$ for some $n > 0$. We infer that $\gamma(t^a, t^b ) = - \sigma_{b+3nd} t^{b-3nd}$ or $-\sigma_{b+3nd+d} t^{b-3nd-d}$. If $b $ is a multiple of $d$, then $\sigma_{b+3nd} = \sigma_{b+3nd+d} = 0$ by hypothesis, hence the required assertion follows. If $b $ is not a multiple of $d$,  then neither $b - 3nd$ nor $b -3nd -d$ is congruent to $-d$ modulo $6d$. Thus we obtain that $\gamma(t^a, t^b) \in N_1$ in all cases. 

The group $N_1$ is a central subgroup of $O$ and $[O, U] \subseteq N_1$, it follows that $UN_1$ is an open normal subgroup of $O$. Let now 
$$N_2 = \bigoplus_{n > n_0, n  \equiv -d \mod 6d} \langle t^{-n} \rangle.$$
Hence $N_2 \cap UN_1 = \{e\}$. Finally, for each $n >n_0$, let $g_n$ and $h_n \in O$ be preimages of $t^{-3nd}$ and $t^{-3nd -d} \in O^A$ respectively. We have $[g_n, h_n] = \gamma( t^{-3nd}, t^{-3nd -d}) = \sigma_d t^{-d - 6nd} \in N_2$. By hypothesis $\sigma_d \neq 0$, hence $g_n$ and $h_n$ do not commute. On the other hand, $g_n$ commutes with $g_m$ and $h_m$ for all $m \neq n$. 

Set $Q = O/UN_1$ and let $\pi \colon O \to Q$ be the canonical projection. For $n> n_0$, let $Q_n$ be the subgroup of $Q$ generated by $\pi(g_n)$ and $\pi(h_n)$. Since the restriction of $\pi$ to $N_2$ is injective, it follows that the groups $Q_n$, which commute pairwise, are non-abelian. Moreover, the collection of their commutator groups $[Q_n, Q_n]$ taken over all $n > n_0$ generates their direct sum.  In view of Corollary~\ref{cor:non-v-ab}, we infer that $Q$ is not virtually abelian, hence it is not type~$\I$ (see Corollary~\ref{cor:G-discrete}). 
%
%
\end{proof}

The following consequence of Theorem~\ref{thm:non-type-I} is immediate. This recovers the result \cite[Theorem 5.9]{Car24} and extends it to the framework of two-step nilpotent groups with monomial commutation relations. 

\begin{cor}
Let $G$ be a two-step nilpotent tdlc group with monomial commutation relations of type~$\sigma$. If $\sigma$ is non-zero and finitely supported, then $G$ is not type~$\I$.
\end{cor}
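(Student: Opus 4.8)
The statement to prove is: if $G$ is a two-step nilpotent tdlc group with monomial commutation relations of type $\sigma$ and $\sigma$ is non-zero and finitely supported, then $G$ is not type~$\I$. The plan is to reduce this to Theorem~\ref{thm:non-type-I} by a direct combinatorial argument. Since $\sigma \colon \NN \to \F_p$ is non-zero, there exists at least one positive integer $z$ with $\sigma_z \neq 0$; since $\sigma$ is finitely supported, the set $\{z > 0 \mid \sigma_z \neq 0\}$ is finite and non-empty, so it has a maximum. Set $d = \max\{z > 0 \mid \sigma_z \neq 0\}$.

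With this choice of $d$, I would verify the hypotheses of Theorem~\ref{thm:non-type-I}. First, $\sigma_d \neq 0$ by the definition of $d$. Second, for every integer $n \geq 2$ we have $dn > d$, and since $d$ is the largest index in the support of $\sigma$ among positive integers, it follows that $\sigma_{dn} = 0$. Both hypotheses of Theorem~\ref{thm:non-type-I} are therefore satisfied, and we conclude directly that $G$ is not type~$\I$.

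This is essentially immediate once the right $d$ is chosen, so there is no serious obstacle; the only point requiring (trivial) care is the verification that the maximum of the support is well defined, which uses precisely that $\sigma$ is non-zero (so the support is non-empty) and finitely supported (so the support is finite). No appeal to the earlier structural machinery is needed beyond the already-proven Theorem~\ref{thm:non-type-I}.
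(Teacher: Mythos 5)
Your proof is correct and matches the paper's intent exactly: the paper states the corollary is immediate from Theorem~\ref{thm:non-type-I}, and taking $d$ to be the maximum of the (finite, non-empty) positive support of $\sigma$ is precisely the instantiation that makes the hypotheses $\sigma_d \neq 0$ and $\sigma_{dn}=0$ for $n \geq 2$ hold. Nothing further is needed.
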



\subsection{The Gl\"ockner--Willis contraction groups}\label{sec:GW}

We now apply our results to a class of two-step nilpotent groups first defined by Gl\"ockner--Willis in  \cite[Section~8]{GW21}. The study of the unitary representations  of those groups was initiated by the second author in \cite{Car24}. Our goal is to strengthen and extend the results from loc.\ cit.

As before, we let $p$ be a prime and set $A= \F_p(\!(t)\!)$. Let $\nu \colon A \to \ZZ$ be the standard valuation. 


\begin{dfn}
Given a sequence $s \colon \NN_{>0} \to  \{0,1\}$, we define the map 
$$\eta_s \colon A \times A \rightarrow A$$ 
on elements $x := \sum_{i = \nu(x)}^\infty x_it^i, y := \sum_{i = \nu(y)}^\infty y_it^i \in \mathbb{F}_p(\!(t)\!)$ by
\begin{displaymath} 
\eta_s(x,y) := \sum_{k \in \supp(s)} \sum_{i = \nu(x)}^\infty x_i y_{i+2k} t^{i+k}.
\end{displaymath}
\end{dfn}

The following is shown in \cite{GW21}.

\begin{prop}[{See \cite[Section~8]{GW21}}] \label{prop:GW}
For any $s \in \{0,1\}^{\mathbb{N}_{>0}}$, the map $\eta_s$ is a continuous bi-additive $2$-cocycle on $A$, and it is equivariant in the sense that $\eta_s(tx,ty) = t\eta_s(x,y)$ for all $x,y \in A$. Furthermore, the corresponding central extension, denoted $G = A \times_{\eta_s} A$, is a contraction group with respect to the automorphism $\alpha$ that is multiplication by $t$ on each  factor. For distinct sequences  $s \in \{0,1\}^{\mathbb{N}_{>0}}$, the corresponding groups are not isomorphic.
\end{prop}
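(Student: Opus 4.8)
The plan is to verify the four assertions of Proposition~\ref{prop:GW} in order; all but the last (pairwise non-isomorphism) are routine, and that last one is where I expect the real work to lie. First I would check that for fixed $x,y\in A$ the double series defining $\eta_s(x,y)$ has a well-defined value in $A$: its coefficient of $t^m$ is $\sum_{k\in\supp(s)} x_{m-k}\,y_{m+k}$, a finite sum because $x_{m-k}=0$ as soon as $k>m-\nu(x)$, and every pair of indices occurring satisfies $m=i+k\ge \nu(x)+1$ as well as $2m=i+(i+2k)\ge \nu(x)+\nu(y)$, so $\nu(\eta_s(x,y))\ge \max\!\big(\nu(x)+1,\ \tfrac12(\nu(x)+\nu(y))\big)$ and $\eta_s(x,y)\in A$. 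Bi-additivity is then immediate, since the coefficients of $\eta_s(x,y)$ are $\F_p$-bilinear in the coefficient sequences of $x$ and of $y$ and coefficients add componentwise. The same valuation estimate gives continuity: with $x_0$ (resp.\ $y_0$) fixed it shows $\eta_s(x_0,y)\to 0$ as $\nu(y)\to\infty$ (resp.\ $\eta_s(x,y_0)\to 0$ as $\nu(x)\to\infty$), hence each partial map is continuous by bi-additivity; it shows $\eta_s$ is continuous at $(0,0)$; and the identity $\eta_s(x,y)-\eta_s(x_0,y_0)=\eta_s(x-x_0,y)+\eta_s(x_0,y-y_0)$ then yields joint continuity. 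Finally, any bi-additive map is automatically a normalized $2$-cocycle (cf.\ the discussion in Subsection~\ref{sec:app-type-I}), so $G=A\times_{\eta_s}A$, with multiplication $(x,u)(y,v)=(x+y,\,u+v+\eta_s(x,y))$ and the product topology, is a second-countable, locally compact, totally disconnected group.

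Equivariance is the index shift $i\mapsto i-1$: since $(tx)_i=x_{i-1}$ and $(ty)_i=y_{i-1}$, one computes $\eta_s(tx,ty)=\sum_k\sum_i x_{i-1}\,y_{i+2k-1}\,t^{i+k}=t\sum_k\sum_i x_i\,y_{i+2k}\,t^{i+k}=t\,\eta_s(x,y)$. Using this, the map $\alpha\colon(x,u)\mapsto(tx,tu)$ is a homomorphism of $G$, because $\alpha\big((x,u)(y,v)\big)=(tx+ty,\,tu+tv+t\,\eta_s(x,y))=(tx+ty,\,tu+tv+\eta_s(tx,ty))=\alpha(x,u)\,\alpha(y,v)$; it is bijective and bicontinuous since multiplication by $t$ is a homeomorphism of $A$ shifting the valuation by $1$, hence $\alpha\in\Aut(G)$. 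And for every $(x,u)\in G$ we have $\alpha^n(x,u)=(t^n x,t^n u)\to\id_G$ as $n\to\infty$ because $\nu(t^n x)=\nu(x)+n\to\infty$. Thus $(G,\alpha)$ is a tdlc contraction group.

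As anticipated, the real work is the pairwise non-isomorphism. If $s=0$ then $\eta_s=0$ and $G$ is abelian, while $G$ is non-abelian whenever $s\neq 0$, so it suffices to separate two distinct non-zero sequences $s,s'$. For $s\neq 0$ the commutator map of $G_s$ is the alternating bi-additive map $\beta_s(x,y)=\eta_s(x,y)-\eta_s(y,x)$, and a direct computation gives $\beta_s(t^m,t^n)=0$ unless $m\equiv n\pmod 2$, in which case $\beta_s(t^m,t^n)=\operatorname{sgn}(n-m)\,s\!\big(\tfrac{|n-m|}{2}\big)\,t^{(m+n)/2}$; in particular $\beta_s(1,t^{2k})=s(k)\,t^k$, and inspecting the leading term of $\beta_s(x,t^{\nu(x)+2k_0})$ for $k_0\in\supp(s)$ shows the centre of $G_s$ is exactly $Z_s=\{0\}\times A$. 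Hence any topological isomorphism $\phi\colon G_s\to G_{s'}$ maps $Z_s$ onto $Z_{s'}$ and induces a topological automorphism $g$ of $Z_s\cong A$ together with a topological automorphism $f$ of $G_s/Z_s\cong A$ satisfying $g\circ\beta_s=\beta_{s'}\circ(f\times f)$. The point is now to recover the sequence from this data: from $\beta_s$ and the valuation $\nu$ one reads off $\supp(s)$, since $\min\{\nu(\beta_s(x,y)):\nu(x)=0,\ \nu(y)=2k\}=k$ exactly when $k\in\supp(s)$ (and is strictly larger otherwise). The remaining difficulty — and the technical heart — is to control $f$ and $g$. When $\phi$ is an isomorphism of contraction groups this is easy: $f$ and $g$ then commute with multiplication by $t$ (the common action of $\alpha_s,\alpha_{s'}$ on $A$ and on the centres), which forces $f$ and $g$ to be multiplication operators $x\mapsto cx$ and $x\mapsto c'x$ for some $c,c'\in A\setminus\{0\}$; then for $k\in\supp(s')$ the leading term of $\beta_{s'}(c,c\,t^{2k})$ is $c_{\nu(c)}^2\,t^{\nu(c)+k}\neq 0$, so $g(\beta_s(1,t^{2k}))=\beta_{s'}(c,c\,t^{2k})\neq 0$, whence $s(k)=1$; by symmetry $\supp(s)=\supp(s')$, i.e.\ $s=s'$. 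Dropping the hypothesis that $\phi$ respects the contractive automorphisms — that is, proving non-isomorphism as bare topological groups — requires the finer bookkeeping of \cite[Section~8]{GW21}, which we follow for that implication.
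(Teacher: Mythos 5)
Your proposal is correct, and it is in fact more self-contained than the paper's treatment: the paper proves nothing here and simply cites \cite[Section~8]{GW21} for the entire statement, whereas you supply complete verifications of well-definedness (via the valuation bound $\nu(\eta_s(x,y))\ge\max(\nu(x)+1,\tfrac12(\nu(x)+\nu(y)))$), bi-additivity, joint continuity, the cocycle property, equivariance, and the contraction property; all of these checks are accurate. The one place where you, like the paper, fall back on the reference is the pairwise non-isomorphism of the groups as bare topological groups. Your self-contained argument for that part --- identifying the centre as $\{0\}\times A$ when $s\neq 0$, reducing to induced automorphisms $f,g$ of $A$, and reading off $\supp(s)$ from leading terms of the commutator map --- is correct but only covers isomorphisms that intertwine the contractive automorphisms, since that hypothesis is what forces $f$ and $g$ to be multiplication operators (via $\F_p[t,t^{-1}]$-linearity plus continuity). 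For an arbitrary topological-group isomorphism, $f$ need not respect the valuation filtration in any controlled way, and that is precisely the content of the finer analysis in \cite[Section~8]{GW21}; your explicit acknowledgement and deferral there matches what the paper itself does, so there is no gap relative to the paper, only relative to a fully self-contained proof. One cosmetic point: the displayed leading term of $\beta_{s'}(c,ct^{2k})$ should carry the factor $s'(k)$, i.e.\ it equals $c_{\nu(c)}^2\,s'(k)\,t^{\nu(c)+k}$, which is nonzero exactly in the case $k\in\supp(s')$ you are considering, so the conclusion is unaffected.
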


Since $\eta_s$ is continuous and bi-additive, it is determined by its values on pairs $(t^a, t^b)$. We observe that 
$$
\eta_s(t^a, t^b) = \left\{
\begin{array}{cl}
s(k) t^{a+k} & \text{if } b = a+2k \text{ for some } k >0,\\
0 & \text{otherwise} 
\end{array}
\right.
$$
for all $a, b \in \ZZ$.

The group $G = A \times_{\eta_s} A$ is defined as the set $A \times A$ endowed with the multiplication defined by
$$
(x, a).(y, b) = (x+y, a+b + \eta_s(x, y)).
$$
In particular,  the second factor $N = \{(0, a) \mid a \in A\}$ in $G$ corresponds to the canonical central subgroup  isomorphic to $A$, and $G/N \cong A$. Viewing those isomorphisms as identifications, we may consider the commutator map $\gamma_s \colon A \times A \to A$ associated with $G$ as in Section~\ref{sec:app-type-I}. 
One can check that the following identities hold for all $a, b \in \mathbb{Z}$:
\begin{align} \label{eq:comm-GW}
\gamma_s(t^a,t^b)= \left\{
\begin{array}{cl}
0 & \text{if } a = b \text{ or } a \not \equiv b \mod 2,\\
s(\frac{b-a} 2) t^{\frac{a+b} 2}  & \text{if } a < b \text{ and } a  \equiv b \mod 2,\\
 - s(\frac{a-b} 2) t^{\frac{a+b} 2} & \text{if } a > b \text{ and } a  \equiv b \mod 2.
\end{array}
\right.
\end{align}

For $i = 0, 1$, set $A_i = t^i \F_p(\!(t^2)\!)$. We have $A \cong A_0 \oplus A_1$. Let $G_i$ be the preimage of $A_i$ under the canonical projection $G \to G/N \cong A$. 

\begin{prop}\label{prop:basic-GW}
For any sequence $s\in \{0,1\}^{\mathbb{N}_{>0}}$, the group $G = A \times_{\eta_s} A$ has the following properties. 
\begin{enumerate}[(i)]
\item $G_0$ and $G_1$ are closed normal subgroups of $G$ that commute. Moreover,  we have $G = G_0 G_1$. 
\item $G_0$ is isomorphic to $G_1$. 
\item $G$ is type~$\I$ if and only if $G_0$ is type~$\I$. 
\item $G_0$ is isomorphic to a two-step nilpotent tdlc group with monomial commutation relations of type $\sigma$, where $\sigma \colon \ZZ \to \F_p$ is the sequence defined by
$$
\sigma_z = \left\{
\begin{array}{cl}
s(z) & \text{if } z > 0,\\
0 & \text{if } z= 0,\\
-s(-z) &\text{if } z< 0.
\end{array}
\right.
$$
\end{enumerate}
	 
\end{prop}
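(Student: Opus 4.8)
The plan is to verify the four assertions by unwinding the commutator relations \eqref{eq:comm-GW} and then invoking the structural results already established. For (i), I would observe that $A_0 = \F_p(\!(t^2)\!)$ and $A_1 = t\F_p(\!(t^2)\!)$ are closed subgroups of $A$ with $A = A_0 \oplus A_1$; since both contain the central subgroup $N$ in their preimages, $G_0$ and $G_1$ are closed normal subgroups of $G$ with $G = G_0 G_1$. The key point is that $[G_0, G_1] = \{e\}$: by \eqref{eq:comm-GW}, $\gamma_s(t^a, t^b) = 0$ whenever $a \not\equiv b \bmod 2$, so the generators of $A_0$ and $A_1$ commute, and by bi-additivity and continuity of $\gamma_s$ the whole groups commute.

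For (iv), which I expect to be the technical heart, I would set up an explicit isomorphism $A_0 = \F_p(\!(t^2)\!) \to A = \F_p(\!(t)\!)$ sending $t^{2n} \mapsto t^n$, and likewise identify the relevant central subgroup $N_0 \leq N$ (the part indexed by even powers) with $A$ via $t^{2n} \mapsto t^n$. Under these identifications, \eqref{eq:comm-GW} restricted to even exponents $a = 2m$, $b = 2n$ gives $\gamma_s(t^{2m}, t^{2n})$ equal to $s\big(\tfrac{2n-2m}{2}\big) t^{m+n} = s(n-m)t^{m+n}$ when $m < n$, its negative when $m > n$, and $0$ when $m = n$; that is, $\gamma(t^m, t^n) = \sigma_{n-m} t^{m+n}$ with $\sigma$ as stated. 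One must also check that the commutator map only lands in the even part of $N$ — i.e.\ that $G_0$ is a central extension of $A \cong A_0$ by $A \cong N_0$ with $[G_0, G_0] \leq N_0$ — which again follows from \eqref{eq:comm-GW}, since $\tfrac{a+b}{2}$ is even when $a,b$ are both even, so one may quotient $G_0$ by the odd-indexed complement of $N_0$ without changing the isomorphism type of the relevant extension (cf.\ Corollary~\ref{cor:type-I-quot}, applied with $[G_0,G_0]$ inside the even part). This realizes $G_0$ as a group with monomial commutation relations of type $\sigma$ in the sense of Definition~\ref{def:monomial}, after noting $G_0$ is second countable since $A$ is.

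For (ii), the isomorphism $G_0 \cong G_1$ should come from multiplication by $t$: the map $x \mapsto tx$ carries $A_0$ onto $A_1$ and $A_0$ (as the central factor) onto $A_1$, and by the equivariance $\eta_s(tx, ty) = t\eta_s(x,y)$ from Proposition~\ref{prop:GW} this lifts to a topological group isomorphism $G_0 \to G_1$; alternatively one checks directly that $\gamma_s(t^{2m+1}, t^{2n+1})$ has the same form as $\gamma_s(t^{2m}, t^{2n})$ up to the shift, using \eqref{eq:comm-GW}. Finally, (iii) is a direct application of Corollary~\ref{cor:type-I-quot} (or Corollary~\ref{cor:type-I-char} together with (i) and (ii)): from (i) the product map $G_0 \times G_1 \to G$ is a continuous surjective homomorphism, so if $G_0$ is type~$\I$ then so is $G_1 \cong G_0$ by (ii), and hence so is $G$ since the type~$\I$ property is preserved under finite direct products and quotients \cite[Proposition~6.E.21]{BH20}; conversely type~$\I$ passes to the quotient $G \to G_0$... more carefully, $G_0$ is a quotient of $G$ only after killing $G_1$, which is legitimate since $G_1$ is a closed normal subgroup, so $G$ type~$\I$ forces $G_0$ type~$\I$. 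The main obstacle is bookkeeping in (iv): correctly tracking which coordinates of $N$ are hit by the commutator map and confirming the index substitutions produce exactly the claimed $\sigma$, with the sign convention $\sigma_{-z} = -\sigma_z$ matching the antisymmetry of $\gamma_s$.
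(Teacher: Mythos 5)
There are two genuine problems. First, in (iii) your converse direction fails: $G_0$ is \emph{not} the quotient of $G$ obtained by killing $G_1$. Since $[G,G] \le N \le G_1$, one has $G/G_1 \cong G_0/(G_0\cap G_1) = G_0/N \cong A_0$, which is abelian; so that quotient does not recover $G_0$, and $G_0$ is merely a closed normal subgroup of $G$ that is neither open nor cocompact. The type~$\I$ property does not in general pass to closed normal subgroups of two-step nilpotent groups --- that is precisely the phenomenon exhibited in Section~\ref{sec:ext} --- so the implication ``$G$ type~$\I$ $\Rightarrow$ $G_0$ type~$\I$'' is the genuinely hard direction and needs an argument. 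The paper proves it by contraposition: if $G_0$ is not type~$\I$, then Corollary~\ref{cor:type-I-char} and Proposition~\ref{prop:tdlc-step2-nilp} yield a character $\chi \in \widehat N$ and a compact open $U_0 \le G_0$ such that $O_{\chi,U_0}/\ker(\chi)$ is not center-by-finite; taking $U = U_0U_1$ in $G$ and using that $G_0$ and $G_1$ commute gives $O_{\chi,U_0} \le O_{\chi,U}$, hence $O_{\chi,U}/\ker(\chi)$ is not center-by-finite and $G$ is not type~$\I$. Your proposal supplies nothing in place of this step.

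Second, in (iv) the claim that the commutator map of $G_0$ lands in the even part $N_0$ of $N$ is false: for $a=2m$, $b=2n$ one has $\gamma_s(t^{2m},t^{2n}) = \pm s(|n-m|)\,t^{m+n}$, and $m+n$ need not be even (e.g.\ $\gamma_s(1,t^2)=s(1)\,t$); your parenthetical ``$\tfrac{a+b}{2}$ is even when $a,b$ are both even'' is simply wrong ($a=2$, $b=4$ gives $3$). Consequently $[G_0,G_0] \not\le N_0$, the proposed use of Corollary~\ref{cor:type-I-quot} to discard the odd-indexed complement is illegitimate, and the identification of the central subgroup with $A$ via $t^{2n}\mapsto t^n$ cannot be carried out (the values $t^{m+n}$ with $m+n$ odd have no preimage). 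The correct route, which the paper takes, is to keep the \emph{full} central subgroup $N\cong A$ --- composed with $x\mapsto -x$ to repair the sign, since the convention $u(m,n)=\sigma_{m-n}$ forces $\gamma(t^m,t^n) = -s(n-m)\,t^{m+n}$ for $m<n$, whereas you obtain $+s(n-m)\,t^{m+n}$ --- and to reindex only the quotient $A_0\cong A$ via $t^n\mapsto t^{2n}$. Items (i) and (ii) are correct and agree with the paper.
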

\begin{proof}
The commutation relations (\ref{eq:comm-GW}) imply that $G_0$ and $G_1$ commute.  Hence the assertion (i) follows readily from the definitions.

By Proposition~\ref{prop:GW}, the group $G$ has a contractive automorphism $\alpha$ that acts as the multiplication by $t$ on $N \cong A$ and $G/N \cong A$. It follows that the restriction of $\alpha$ to $G_0$ is an isomorphism of $G_0$ onto $G_1$. Thus (ii) holds. 

Suppose that $G_0$ is type~$\I$, so that $G_1$ is also type~$\I$ by (ii). Hence the direct product $G_0 \times G_1$ is type~$\I$ by \cite[Proposition~6.E.21(3)]{BH20}. By (i), the multiplication map yields a surjective continuous homomorphism $G_0 \times G_1 \to G$. Hence $G$ is type~$\I$ as well.

Assume conversely that $G_0$ is not type~$\I$. In particular it is non-abelian, hence $s$ is non-zero. It follows from  \cite[Lemma~8.4]{GW21} that $N$ is the center of $G$ and of $G_0$. By Corollary~\ref{cor:type-I-char}, there exists a character $\chi\in \widehat N$ such that $\omega_\chi \colon G_0/N \to \widehat{G_0/N}$ has a non-closed image. Let $U\leq G$ be a compact open subgroup of $G$ such that $[U, U]\leq \ker(\chi)$, and set $U_0 = U \cap G_0$ and $U_1 = U \cap G_1$. Since $G$ is second countable, the surjective homomorphism $G_0 \times G_1 \to G$ is an open map, hence $U_0 U_1$ is open in $G$. Therefore, upon replacing $U$ by the smaller subgroup $U_0 U_1$, we may assume that $U = U_0 U_1$. 

Let $O_{\chi, U_0} \leq G_0$ be the group defined in Proposition~\ref{prop:tdlc-step2-nilp}. The latter implies that  the quotient $O_{\chi, U_0}/ \ker(\chi)$ is not center-by-finite. Since $U = U_0 U_1$ and since $G_0$ commutes with $G_1$, we infer that $O_{\chi, U_0} \leq O_{\chi, U}$. Hence  $O_{\chi, U}/ \ker(\chi)$ is not center-by-finite. Using Proposition~\ref{prop:tdlc-step2-nilp} again, we deduce that $\omega_\chi \colon G/N \to \widehat{G/N}$ has a non-closed image. Thus $G$ is not type~$\I$ by Corollary~\ref{cor:type-I-char}. This proves that (iii) holds.

To prove (iv), we observe that $G_0$ contains $N$, and that $G_0/N \cong A_0 = \F_p(\!(t^2)\!)$. The commutator map of $G_0$ is the map $\gamma_s \colon A_0 \times A_0 \to N$ induced by the commutator relations appearing in (\ref{eq:comm-GW}). Consider the isomorphism $\delta \colon A\to A_0$ sending $t^{n}$ to $t^{2n}$ for all $n \in \ZZ$, and the isomorphism $\mu \colon A \to N$ sending $x$ to $-x$ for all $x$. Define $\gamma = \mu \circ \gamma_s \circ (\delta \times \delta) \colon A \times A \to A$. We deduce from (\ref{eq:comm-GW}) that 
$$\gamma(t^a, t^b)  = \sigma_{a-b} t^{a+b}$$
for all $a,b \in \ZZ$, where $\sigma$ is the bi-infinite sequence defined in the statement of the proposition. The assertion (iv) readily follows. 
\end{proof}
	
Theorem~\ref{thmintro:GW-type-I} stated in the introduction is now a direct consequence of Theorems~\ref{thm:monomial-type-I} and~\ref{thm:non-type-I}. 
%
%
%
%
%
%
%
%
%

\section{Type~$\I$ extensions}\label{sec:ext}

The goal of this section is to complete the proof of Theorem~\ref{thmintro:type-I-ext} stated in the introduction. 

%

\begin{proof}[Proof of Theorem~\ref{thmintro:type-I-ext}]
Set $A = G/N$. Since $A$ is abelian, the desired conclusion is clear if $\chi$ is the trivial character. We assume henceforth that $\chi$ is non-trivial. 

Let $Q = G/\ker(\chi)$ and denote the canonical projection by $g \mapsto \bar g$. Since $\ker(\chi) \leq N$, the quotient map $G \to G/N = A$ factors through $Q$. In particular, every character $\varphi \in \widehat A$ may be viewed as a character of $Q$ (and also of $G$). 

Since $N$ is of exponent $p$, it follows that $C = N/\ker(\chi) \cong \mathrm{im}(\chi)$ is a cyclic group of order~$p$.  Similarly, the image of each $\varphi \in \widehat A$ is contained in the cyclic subgroup of order~$p$ of $\C^*$, which coincides with $ \mathrm{im}(\chi)$. Upon identifying $C$ with $ \mathrm{im}(\chi)$ via $\chi$, we infer that the dual $\widehat A$ acts on the quotient group $Q= G/\ker(\chi)$ via
$$\widehat A \times Q \to Q : (\varphi, \bar g) \mapsto \varphi(\bar g) \bar g.$$
One checks that this is a continuous action by automorphisms on $Q$. Therefore, we may form the semi-direct product 
$$E = \widehat A \ltimes Q.$$
By construction, the group $Q$ embeds as a closed normal subgroup of $E$. The group $E$ consists of ordered pairs $(\varphi, \bar g) \in \widehat A \times Q$ with multiplication defined by
$$(\varphi, \bar g) \cdot (\psi, \bar h) = (\varphi \psi, \psi(\bar g)^{-1} \bar g \bar h).$$
Let $\bar C = \{(1, \bar c) \mid \bar c \in C\}$. 

\medskip
We claim that $E$ is a two-step nilpotent locally compact group. More precisely, we claim that
$[E, E] \leq \bar C \leq Z,$ where $Z$ denotes the center of $E$. 
Indeed, one computes that for all $(\varphi, \bar g) ,  (\psi, \bar h)\in E$, we have 
$$[(\varphi, \bar g) ,  (\psi, \bar h)] = \big(1, [\bar g , \bar h ] \psi(\bar g)^{-1} \varphi(\bar h)\big) \in \bar C.$$
Thus we have $[E, E] \leq \bar C$. Let us now assume that $\bar g \in C \leq Q$. It then follows that $[(1, \bar g) ,  (\psi, \bar h)] = \big(1, [\bar g , \bar h ] \psi(\bar g)^{-1} \big)= e$, since the elements of $C$ are central in $Q$, and since every $\psi \in \widehat A$, viewed as a character of $Q$, is trivial on $C$. The claim is proved. 

\medskip
We next claim that $E$ is type~$\I$. To this end, we rely on Corollary~\ref{cor:type-I-char}. Since $[E, E] \leq \bar C \leq Z$ by the previous claim, and since the map $\bar c \mapsto (1, \bar c)$ is an isomorphism of $C$ onto $\bar C$, it suffices to show that for every $\sigma \in \widehat C$, the homomorphism 
$$\omega_\sigma \colon E \to \widehat{E/\bar C} : (\varphi, \bar g) \mapsto  \omega_\sigma 
(\varphi, \bar g) \colon  \begin{array}{rcl}
E/\bar C & \to &  \C^*\\  
(\psi, \bar h)\bar C & \mapsto & \sigma\big(  [\bar g , \bar h ] \psi(\bar g)^{-1} \varphi(\bar h)\big)
\end{array}$$
has a closed image. This is clear if $\sigma$ is trivial. We finish the proof of  the claim by showing that if $\sigma$ is non-trivial, the homomorphism $\omega_\sigma$ is surjective. 

We have $E/\bar C \cong  \widehat A \times A$, so that  $\widehat{E/\bar C} \cong A \times \widehat A$. Moreover $A$ is of exponent $p$, so its dual $\widehat A$ is also of exponent~$p$. Taking $\bar g = 1$, we see that $\omega_\sigma 
(\varphi, 1)  \colon (\psi, \bar h)\bar C  \mapsto  \sigma \circ  \varphi(\bar h)$. Since $\sigma$ is injective, we have $\{\sigma \circ \varphi \mid \varphi \in \widehat A\} = \widehat A$, hence we obtain $\{\omega_\sigma 
(\varphi, 1) \mid \varphi \in \widehat A\} = \{1\} \times \widehat A$. Given $\bar g \in Q$, we now set $\varphi_{\bar g} \colon  \bar h \mapsto [\bar g, \bar h]^{-1}$. Since $C$ is identified with $\mathrm{im}( \chi)$, we may view $\varphi_{\bar g}$ as a character of $A$. It follows that for each $\bar g \in Q$, we have $\omega_\sigma 
(\varphi_{\bar g}, \bar g)  \colon (\psi, \bar h)\bar C  \mapsto  \sigma \circ  \psi(\bar g)^{-1}$. Using again that  $\sigma$ is injective, we obtain $\{\omega_\sigma 
(\varphi_{\bar g}, \bar g) \mid \bar g \in Q\} =\widehat{(\widehat  A)} \times \{1\} \cong A \times \{1\}$. It follows that the image of $\omega_\sigma$ contains the full direct product $\widehat{E/\bar C} \cong A \times \widehat A$, thereby confirming that $\omega_\sigma$ is surjective. 

\medskip We have shown that $Q$ continuously embeds as a closed normal subgroup of $E$, which is a two-step nilpotent second countable locally compact group of type~$\I$. We henceforth view $Q$ as a closed normal subgroup of $E$. Since $A$ and $N$ have exponent $p$, they are both totally disconnected, hence $G, Q$ and $E$ are all totally disconnected as well. Let $U \leq E$ be a compact open subgroup. Then $QU$ is an open subgroup of $E$, hence it is type~$\I$ by \cite[Proposition~6.E.21(1)]{BH20}. Thus $Q$ is a closed cocompact normal subgroup of $QU$, which is a  two-step nilpotent type~$\I$ group.  
\end{proof}
	
\begin{rem}\label{rem:ex-ext}
Every non-type~$\I$ contraction group $G$ afforded by Theorem~\ref{thmintro:GW-type-I} has a factor representation that is not type~$\I$. Letting $\chi$ be the central character of such a factor representation, we infer that the quotient $G/\ker(\chi)$ is not type~$\I$. Theorem~\ref{thmintro:type-I-ext} applies to $G$, hence we obtain numerous non-type~$\I$ groups admitting an extension by  a compact group that is type~$\I$.
\end{rem}


\bibliographystyle{amsalpha}
\bibliography{RepNil}


\end{document}